\newtheorem{thm}{Theorem}
\newtheorem{cor}[thm]{Corollary}
\newtheorem{pro}[thm]{Proposition}
\newcommand{\so}{\mathrm{SO}}
\newcommand{\sym}{\mathrm{Sym}}
\newcommand{\RQ}{\mathbf{R}_Q}
\newcommand{\RQQ}{\mathbf{R}_{QQ}}
\definecolor{clr0}{rgb}{0.9, 0.7, 0}
\definecolor{clr1}{rgb}{0, 0, 1}
\definecolor{clr2}{rgb}{0, 0.7, 0}
\definecolor{clr3}{rgb}{1, 0, 0}
\definecolor{clr4}{rgb}{0.75, 0, 1}
\definecolor{clr5}{rgb}{0.7, 0.4, 0}
\definecolor{clr6}{rgb}{0.3, 0.3, 0.3}
\newcommand{\cb}[1]{\textcolor{#1}{\textbullet}}
\newenvironment{customthm}[1]
  {\innercustomthm}
  {\endinnercustomthm}
\newenvironment{customprop}[1]
  {\innercustomprop}
  {\endinnercustomprop}
\newcommand{\atan}{\mathrm{atan2}}
\newcommand{\atanl}{\mathrm{atan2}_{\triangleleft}}
\newcommand{\atanr}{\mathrm{atan2}_{\triangleright}}
\newcommand{\tp}{\hspace{-2pt}+\hspace{-2pt}}
\newcommand{\tm}{\hspace{-2pt}-\hspace{-2pt}}
\title{Revisiting the Continuity of Rotation \\ Representations in Neural Networks}
\author{%
  Sitao Xiang \\
  University of Southern California \\
  \texttt{sitaoxia@usc.edu} \\
  \And
  Hao Li \\
  Pinscreen, Inc. \\
  \texttt{hao@hao-li.com} \\
}
\begin{document}

\maketitle

\begin{abstract}
  In this paper, we provide some careful analysis of certain pathological behavior
  of Euler angles and unit quaternions encountered in previous works related
  to rotation representation in neural networks. In particular, we show that for certain problems,
  these two representations will provably produce completely wrong results for some inputs, and
  that this behavior is inherent in the topological property of the problem itself and is not
  caused by unsuitable network architectures or training procedures.
  We further show that previously proposed embeddings
  of $\so(3)$ into higher dimensional Euclidean spaces aimed at fixing this behavior
  are not universally effective, due to possible symmetry in the input causing changes
  to the topology of the input space. We propose an ensemble trick as an alternative solution.
  
\end{abstract}

\section{Introduction}

Quaternions and Euler angles have traditionally been used to represent 3D rotations in computer graphics and vision. This tradition is preserved in more recent works where neural networks are employed for inferring or synthesizing rotations, for a wide range of applications such as pose estimation from images, e.g. \cite{xiang2017posecnn}, and skeleton motion synthesis, e.g. \cite{villegas2018neural}. However, difficulties has been encountered, in that the network seems unable to avoid rotation estimation errors in excess of $100^\circ$ in certain cases, as reported by \cite{xiang2017posecnn}. Attempts has been made to explain this, including arguments that Euler angle and quaternion representations are not embeddings and in a certain sense discontinuous \cite{zhou2019continuity}, and from symmetry present in the data \cite{xiang2017posecnn,saxena2009learning}. One proposed solution is to use embeddings of $\so(3)$ into $\mathbb{R}^5$ or $\mathbb{R}^6$ \cite{zhou2019continuity}. However, we feel that these arguments are mostly based on intuition and empirical results from experiments, while the nature of the problem is topological which is one aspect that has not been examined in depth. In this paper we aim to give a more precise characterization of this problem, theoretically prove the existence of high errors, analysis the effect of symmetries, and propose a solution to this problem. In particular:
\begin{itemize}
    \item We prove that a neural network converting rotation matrices to quaternions and Euler angles must produce an error of $180^\circ$ for some input.
    \item We prove that symmetries in the input cause embeddings to also produce high errors, and calculate error bounds for each kind of symmetry.
    \item We propose the self-selected ensemble, a method that works well with many different rotation representations, even in the presense of input symmetry.
\end{itemize}
We further verify our theoretical claims with experiments.

\section{Theoretical Results}

\subsection{Guaranteed Occurrence of High Errors}

We first consider a toy problem: given a 3-d rotation represented by a rotation matrix, we want to convert it to other rotation representations with neural networks. We will see that under the very weak assumption that our neural network computes a continuous function, it is provable that given any such network that converts rotation matrices to quaternions or Eular angles, there always exists inputs on witch the network produces outputs with high error.

When treating quaternions as Euclidean vectors, we identify $\mathbf{q}=a+b\mathbf{i}+c\mathbf{j}+d\mathbf{k}$ with $(a,b,c,d)$.
We denote the vector dot product between $\mathbf{p}$ and $\mathbf{q}$ with a dot as $\mathbf{p}\cdot \mathbf{q}$ and quaternion multiplication with juxtaposition as $\mathbf{p}\mathbf{q}$. The quaternion conjugate of $\mathbf{q}$ is $\overline{\mathbf{q}}$ and the norm of $\mathbf{q}$ which is the same for quaternions and vectors is $||\mathbf{q}||$.

It has been noticed that any function that converts 3D rotation matrices to their corresponding quaternion exhibits some ``discontinuities'' and that this is related to the fact that $\so(3)$ does not embed in $\mathbb{R}^4$. This has been argued by giving a specific conversion function $f$ and finding discontinuities. Most often, given a rotation matrix $M$, if $\mathrm{tr}(M)>-1$ we have
\begin{equation}
    f(M)=(\frac{t}{2}, \frac{1}{2t}(M_{32}-M_{23}), \frac{1}{2t}(M_{13}-M_{31}), \frac{1}{2t}(M_{21}-M_{12}))
\end{equation}
where $t=\sqrt{1+\mathrm{tr}(M)}$. Since quaternions $\mathbf{q}$ and $-\mathbf{q}$ give the same rotation, any conversion from rotation matrix to quaternion needs to break ties. The conversion given above breaks ties towards the first coordinate being positive. When it equals zero there needs to be additional rules that are not relevant here. Then discontinuities can be found by taking limits on the ``decision boundary'': consider $r(t): [0, 1] \to \so(3)$ defined by
\begin{equation}
\label{eqn:rt}
r(t)=
\begin{bmatrix}
    \cos 2\pi t & -\sin 2\pi t & 0 \\
    \sin 2\pi t & \cos 2\pi t  & 0 \\
    0           & 0            & 1 \\
\end{bmatrix}
\end{equation}
That is, $r(t)$ is the rotation around $z$-axis by angle $2\pi t$. 
Then $f(r(t))=(\cos \pi t, 0, 0, \sin \pi t)$ when $r\in[0,\frac{1}{2})$ and $f(r(t))=(-\cos \pi t, 0, 0, -\sin \pi t)$ when $r\in(\frac{1}{2},1]$. So, we have
\begin{equation}
\lim_{t\to\frac{1}{2}^-}f(r(t))=(0,0,0,1)\neq(0,0,0,-1)=\lim_{t\to\frac{1}{2}^+}f(r(t))
\end{equation}
Thus $f$ is not continuous at $r(\frac{1}{2})$. Since neural networks typically compute continuous functions, such a function cannot be computed by a neural network.

However, we feel that this argument fails to address this problem satisfactorily. Firstly, it pertains to a specific conversion rule. The ties can be broken towards any hemisphere, for which there are an infinite number of choices. In fact the image of $f$ needs not be a hemisphere. In addition, there is no reason to mandate a specific conversion function for the neural network to fit. We need to prove that even with the freedom of learning its own tie-breaking rules, the neural network cannot learn a correct conversion from rotation matrices to quaternions.

Another shortcoming is that this argument does not give error bounds. Since neural networks can only approximate the conversion anyways, can we get a continuous conversion function if we allow some errors and if so, how large does the margin have to be? Experiments in \cite{zhou2019continuity} hint at an unavoidable maximum error of $180^\circ$, which is the largest possible distance between two 3D rotations. We want to prove that. Now we introduce our first theorem. Let $\RQ:S^3 \to \so(3)$ be the standard conversion from a quaternion to the rotation it represents:
\begin{equation}
\RQ(w, x, y, z)=
\begin{bmatrix}
    1-2y^2-2z^2 & 2(xy-zw)    & 2(xz+yw)  \\
    2(xy+zw)    & 1-2x^2-2z^2 & 2(yz-xw)  \\
    2(xz-yw)    & 2(yz+xw)    & 1-2x^2-2y^2 \\
\end{bmatrix}
\end{equation}
and let $d(R_1,R_2)$ denote the distance between two rotations $R_1$ and $R_2$, measured as an angle. To reduce cumbersome notations we overload $d$ so that when a quaternion $\mathbf{q}$ appear as an argument in $d$ we mean $\RQ(\mathbf{q})$. We have $d(\mathbf{p},\mathbf{q})=2\cos^{-1}|\mathbf{p}\cdot \mathbf{q}|$ (see appendix \ref{sec:math}).

\begin{thm}
\label{thm:1func}
For any continuous function $f: \so(3) \to S^3$, there exists a rotation $R\in \so(3)$ such that $d(R,f(R))=\pi$.
\end{thm}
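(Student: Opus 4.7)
The plan is to exploit the fact that $\RQ: S^3 \to \so(3)$ is a nontrivial double cover and reduce the theorem to an intermediate value argument on a carefully chosen loop. I would reuse the loop $r: [0,1] \to \so(3)$ from equation (\ref{eqn:rt}) — rotation about the $z$-axis by angle $2\pi t$ — since it represents the nontrivial element of $\pi_1(\so(3)) \cong \mathbb{Z}/2$. Its natural continuous lift is $\mathbf{q}(t) = (\cos \pi t, 0, 0, \sin \pi t)$, and a direct substitution into the formula for $\RQ$ verifies that $\RQ(\mathbf{q}(t)) = r(t)$ for every $t \in [0,1]$. The crucial feature is that although $r(0) = r(1) = I$, the lift satisfies $\mathbf{q}(1) = -\mathbf{q}(0)$.

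With this setup in hand, I would define a scalar function $\phi: [0,1] \to \mathbb{R}$ by $\phi(t) = \mathbf{q}(t) \cdot f(r(t))$. As a composition of continuous maps, $\phi$ is continuous. Since $f(r(0)) = f(r(1))$ but $\mathbf{q}(1) = -\mathbf{q}(0)$, we obtain $\phi(1) = -\phi(0)$, so the Intermediate Value Theorem produces some $t^* \in [0,1]$ with $\phi(t^*) = 0$. Applying the distance formula $d(\mathbf{p}, \mathbf{q}) = 2\cos^{-1}|\mathbf{p} \cdot \mathbf{q}|$ recalled just above the theorem, and setting $R = r(t^*)$, we conclude $d(R, f(R)) = 2\cos^{-1}|0| = \pi$, which is exactly what the theorem asserts.

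The argument has no real computational difficulty; its only essential ingredient is the topological fact $\mathbf{q}(1) = -\mathbf{q}(0)$, which encodes the non-triviality of the cover. The main conceptual hurdle — and the reason this strengthens the earlier discontinuity argument — is that one must avoid tying the proof to any particular tie-breaking rule. This is handled automatically by the appearance of the absolute value in the distance formula: replacing $f(R)$ by $-f(R)$ pointwise changes neither $|\phi|$ nor $d(R,f(R))$, so the sign flip in the lift forces $\phi$ to cross zero no matter what global sign choices $f$ may have made elsewhere on $\so(3)$. The only assumption used is continuity of $f$, so the conclusion applies uniformly to every continuous conversion function, in particular to any function realized by a neural network with continuous activations. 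Any other non-contractible loop in $\so(3)$ together with its lift would serve equally well; the explicit $r(t)$ is chosen purely for notational convenience.
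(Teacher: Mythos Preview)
Your proposal is correct and follows essentially the same approach as the paper: the same loop $r(t)$, the same lift $\mathbf{q}(t)=(\cos\pi t,0,0,\sin\pi t)$ (the paper calls it $h^*$), the same scalar function $\phi(t)=\mathbf{q}(t)\cdot f(r(t))$ (the paper calls it $v$), and the same intermediate value argument to find a zero, yielding $d(R,f(R))=2\cos^{-1}0=\pi$. The only difference is cosmetic---the paper invokes the lifting property of covering spaces to justify uniqueness of the lift, whereas you simply verify the lift directly; both are fine.
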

\begin{proof}
Let $r(t): [0, 1] \to \so(3)$ be defined as in equation \ref{eqn:rt}.
$r(t)$ is continuous in $t$. Consider $r$ as a path in $\so(3)$. $r(0)=r(1)=I_{3\times 3}$, so $r$ is a loop.

$S^3$ is a covering space of $\so(3)$ with $\RQ$ being the covering map. $\RQ(1, 0, 0, 0)=I_{3\times 3}=r(0)$. By the lifting property of covering spaces,\footnote{See standard algebraic topology texts, e.g. page 60 of \cite{hatcher2001algebraic}} $r$ lifts to a unique path in $S^3$ starting from $(1, 0, 0, 0)$. That is, there exists a unique continuous  function $h: [0, 1] \to S^3$ such that $\RQ(h(t))=r(t)$ for all $0 \le t \le 1$ and $h(0)=(1, 0, 0, 0)$. It is easy to see that $h^*(t)=(\cos \pi t, 0, 0, \sin \pi t)$ is that path.

Let $v(t)=h^*(t)\cdot f(r(t))$, then $v(0)=(1, 0, 0, 0)\cdot f(I_{3\times 3})$ and $v(1)=(-1, 0, 0, 0)\cdot f(I_{3\times 3})$, so $v(0)=-v(1)$. $v$ is continuous in $t$. By the intermediate value theorem, there exists $t_0\in [0, 1]$ such that $v(t_0)=0$. So, $d(h^*(t_0), f(r(t_0)))=2\cos^{-1}|v(t_0)|=2\cos^{-1}0=\pi$.

Now we have found $R_0=r(t_0)$ such that the rotations $R_0=\RQ(h^*(t_0))$ and $\RQ(f(R_0))=f(r(t_0)))$ differ by a rotation of $\pi$.
\end{proof}

Note that there exists continuous functions mapping a set of Euler angles to a quaternion representing the same rotation. For example, for extrinsic $x$-$y$-$z$ Euler angles $(\alpha, \beta, \gamma)$, we can get a quaternion of this rotation by multiplying three elemental rotations:
\begin{equation}
\mathbf{Q}_{xyz}(\alpha, \beta, \gamma)=(\cos\frac{\gamma}{2}+\mathbf{k}\sin\frac{\gamma}{2})(\cos\frac{\beta}{2}+\mathbf{j}\sin\frac{\beta}{2})(\cos\frac{\alpha}{2}+\mathbf{i}\sin\frac{\alpha}{2})
\end{equation}

So, as a corollary, we conclude that a continuous function from 3D rotation matrices to Euler angles likewise must produce large error at some point, for otherwise by composing it with $\mathbf{Q}_{xyz}$ we get a continuous function violating theorem \ref{thm:1func}. Let $\mathbf{R}_{xyz}(\alpha, \beta, \gamma):\mathbb{R}^3\to\so(3)$ be the standard conversion from extrinsic $x$-$y$-$z$ Euler angles to rotation matrices given by $\mathbf{R}_{xyz}(\alpha, \beta, \gamma)=\RQ(\mathbf{Q}_{xyz}(\alpha, \beta, \gamma))$.

\begin{cor}
\label{thm:1func_e}
For any continuous function $f: \so(3) \to \mathbb{R}^3$, there exists a rotation $R\in \so(3)$ such that $d(R,\mathbf{R}_{xyz}(f(R)))=\pi$.
\end{cor}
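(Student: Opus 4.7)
The plan is to reduce the corollary directly to Theorem \ref{thm:1func} by composing $f$ with the already-defined continuous map $\mathbf{Q}_{xyz}$, exactly as the paragraph preceding the corollary suggests. Given any continuous $f:\so(3)\to\mathbb{R}^3$, I would form $g=\mathbf{Q}_{xyz}\circ f:\so(3)\to S^3$. The first thing to check is that $g$ actually lands in $S^3$ and is continuous. Continuity is immediate because $\mathbf{Q}_{xyz}$ is built from $\cos$, $\sin$, and quaternion multiplication (all continuous operations from $\mathbb{R}^3$ into $\mathbb{H}$), and a composition of continuous functions is continuous. The image lies in $S^3$ because each of the three elemental factors in $\mathbf{Q}_{xyz}$ is a unit quaternion, and the product of unit quaternions is again a unit quaternion.

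Next, I would invoke Theorem \ref{thm:1func} on $g$: there exists some $R\in\so(3)$ with $d(R,g(R))=\pi$. By construction,
\begin{equation}
\RQ(g(R))=\RQ(\mathbf{Q}_{xyz}(f(R)))=\mathbf{R}_{xyz}(f(R)),
\end{equation}
where the second equality is simply the definition of $\mathbf{R}_{xyz}$ given just before the corollary. Since the overloaded distance $d$ treats a quaternion argument by applying $\RQ$, we have $d(R,g(R))=d(R,\mathbf{R}_{xyz}(f(R)))$, and so the same $R$ witnesses the desired equality $d(R,\mathbf{R}_{xyz}(f(R)))=\pi$.

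There is really no substantive obstacle here; the corollary is a one-line consequence of Theorem \ref{thm:1func} once one has a continuous lift of Euler angles to unit quaternions, and $\mathbf{Q}_{xyz}$ is supplied for exactly that purpose. The only thing that deserves an explicit sentence in the write-up is the contrapositive framing the authors implicitly used: if $f$ produced error strictly less than $\pi$ everywhere, then $g=\mathbf{Q}_{xyz}\circ f$ would be a continuous $\so(3)\to S^3$ map with error strictly less than $\pi$ everywhere, contradicting Theorem \ref{thm:1func}. I would present the argument in the direct form above rather than the contrapositive, since it produces the witness $R$ constructively from the witness supplied by Theorem \ref{thm:1func}.
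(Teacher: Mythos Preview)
Your proposal is correct and follows essentially the same approach as the paper: compose $f$ with the continuous map $\mathbf{Q}_{xyz}$ to obtain a continuous $\so(3)\to S^3$ function, then apply Theorem~\ref{thm:1func}. The paper states this only in the contrapositive form in the paragraph preceding the corollary, while you give the direct form, but the content is identical.
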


Obviously the same conclusion holds for any possible sequence of Euler angles, intrinsic or extrinsic.

It is easy to see that the same type of argument applies to the simpler case of functions from $\so(2)$ to $\mathbb{R}$ that tries to compute the angle of the rotation, by using the top-left $2\times 2$ of $r(t)$, setting $h^*(t)=2\pi t$ and $v(t)=h^*(t)-r(t)$ and finding $t_0$ such that $v(t_0)=\pi$.

\begin{thm}
For any continuous function $f: \so(2) \to \mathbb{R}$, there exists a rotation $R\in \so(2)$ such that the rotation of angle $f(R)$ differs from $R$
by a rotation of angle $\pi$.
\end{thm}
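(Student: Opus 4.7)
The plan is to mirror the proof of Theorem \ref{thm:1func} almost verbatim, replacing the covering $\RQ:S^3\to\so(3)$ with the universal cover $p:\mathbb{R}\to\so(2)$ that sends $\theta$ to the planar rotation by angle $\theta$. I take $r(t):[0,1]\to\so(2)$ to be rotation by angle $2\pi t$, i.e., the top-left $2\times 2$ block of the loop in equation \ref{eqn:rt}; this is a loop based at the identity. Since $p$ is a covering map and $p(0)=I$, the lifting property guarantees a unique continuous lift $h^*:[0,1]\to\mathbb{R}$ of $r$ with $h^*(0)=0$, and in this explicit case we clearly have $h^*(t)=2\pi t$.

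Next I form the continuous real-valued function $v(t)=h^*(t)-f(r(t))$ on $[0,1]$. Because $r(0)=r(1)=I$, we compute $v(0)=-f(I)$ and $v(1)=2\pi-f(I)$, so $v(1)-v(0)=2\pi$. The image of $v$ therefore sweeps an interval of length $2\pi$, and the intermediate value theorem yields some $t_0\in[0,1]$ with $v(t_0)$ equal to an odd multiple of $\pi$, i.e., $h^*(t_0)-f(r(t_0))\equiv\pi\pmod{2\pi}$. Setting $R=r(t_0)$, the rotation by $h^*(t_0)$ (which equals $R$) and the rotation by $f(R)$ differ by angle $\pi$, as required.

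The only obstacle is bookkeeping: one must check that angular distance in $\so(2)$ is measured modulo $2\pi$, so that any odd multiple of $\pi$ truly realizes the unique rotation at distance $\pi$ from the identity and not some smaller distance. All of the genuine mathematical content is still carried by a single application of the intermediate value theorem, just as in Theorem \ref{thm:1func}; the simplification here is that the covering space $\mathbb{R}$ and the explicit lift $h^*(t)=2\pi t$ make every step transparent, with no quaternion algebra needed.
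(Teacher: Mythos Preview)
Your proposal is correct and follows essentially the same approach as the paper's sketch: the paper explicitly says to use the top-left $2\times 2$ block of $r(t)$, set $h^*(t)=2\pi t$ and $v(t)=h^*(t)-f(r(t))$, and apply the intermediate value theorem. You are in fact slightly more careful than the paper's sketch, since you correctly note that $v(t_0)$ need only be an \emph{odd multiple} of $\pi$ (guaranteed because $[v(0),v(1)]$ has length $2\pi$), whereas the paper's informal remark ``finding $t_0$ such that $v(t_0)=\pi$'' tacitly assumes $f(I)\in[-\pi,\pi]$.
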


Given that the quaternion representation of 3D rotations is due to the exceptional isomorphism $\mathrm{Spin}(3)\cong\mathrm{SU}(2)$, there is no obvious generalization to $n$-dimensional rotations. We do however discuss analogous results for 4D rotations in appendix \ref{sec:4d}, due to $\mathrm{Spin}(4)\cong\mathrm{SU}(2)\times\mathrm{SU}(2)$.

\subsection{The Self-selecting Ensemble}

Neural networks are typically continuous (and differentiable) so that gradient based methods can be used for training. Nevertheless, we often employ discontinuous operations at test time, e.g. quantizing a probability distribution into class label in classification networks. Unfortunately this does not work for a regression problem with a continuous output space. However, by employing a simple ensemble trick, it is possible to transform this discontinuity of regression into a discontinuity of classification.

Consider again the problem of recovering the rotation angle from a 2D rotation matrix. Given $M=\left[\begin{smallmatrix}a&-b\\b&a\end{smallmatrix}\right]$ with $a^2+b^2=1$, what we want is exactly $\mathrm{atan2}(b, a)$. There are different ways for choosing the principal value for this multi-valued function, e.g. in $[0, 2\pi)$ or in $(-\pi, \pi]$. Let us distinguish between these two by calling the first one $\atanr$ and the second one $\atanl$. They have discontinuities at rotation angles of $2k\pi$ and $(2k+1)\pi$ ($k\in\mathbb{Z}$), respectively.

Now we can construct two functions that are continuous and computes $\atanr$ and $\atanl$ respectively except when near the discontinuity, where they give incorrect values in order to make them continuous. If we make sure that these two ``wrong regions'' do not overlap, then for any input matrix at least one of the two functions will give a correct rotation angle.

\begin{thm}
\label{thm:2d-2funcs}
There exists continuous functions $f_1, f_2: \so(2) \to \mathbb{R}$ such that for any rotation $R\in \so(2)$, at least one of $f_1(R)$ and $f_2(R)$ gives the correct rotation angle of $R$.
\end{thm}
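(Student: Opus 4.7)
The approach is to construct two explicit continuous angle-recovery functions, each correct on a large arc of $\so(2)\cong S^1$ and incorrect only on a small arc, choosing those small arcs to be disjoint so that the union of the ``correct'' regions covers the whole circle. Since no single continuous $f:\so(2)\to\mathbb{R}$ can always return the right angle (this is the 2D analogue of Theorem \ref{thm:1func}), at least one function must be wrong at each point; the best we can hope for is to arrange the failures so they never coincide.

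Concretely, parameterize rotations by $\theta\in\mathbb{R}/2\pi\mathbb{Z}$. I would set $f_1(\theta)=\theta$ on the arc $[-3\pi/4,3\pi/4]$ and, on the complementary arc through $\theta=\pi$, interpolate linearly between the boundary values $3\pi/4$ and $-3\pi/4$. This produces a continuous function $S^1\to\mathbb{R}$ whose image lies in $[-3\pi/4,3\pi/4]$. Define $f_2$ by rotating the construction by $\pi$: let $f_2(\theta)=\theta$ on $[\pi/4,7\pi/4]$ and interpolate linearly on the short arc through $\theta=0$ between the boundary values $7\pi/4$ and $\pi/4$. Continuity at the identification points is built into the matching of endpoint values.

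To verify the theorem, one checks two things. First, each $f_i$ really is ``correct'' (i.e., $f_i(\theta)\equiv\theta\pmod{2\pi}$) on its designated long arc by construction, and within its interpolation arc $f_i$ takes values in an interval that does not meet $\theta+2\pi\mathbb{Z}$ for $\theta$ in that arc --- a routine check that amounts to solving one linear equation and confirming that the only solutions lie at the arc endpoints. Second, the two correct regions $[-3\pi/4,3\pi/4]$ and $[\pi/4,7\pi/4]$ together cover $\so(2)$, so every rotation lies in at least one of them.

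The main obstacle is not mathematical but notational: one must be careful with the parameterization at the identification $-\pi\sim\pi$ in defining $f_1$, and must verify that the slope chosen for the linear interpolation on $f_2$'s wrong arc does not accidentally make $f_2(\theta)$ equal to $\theta+2\pi k$ for some nonzero integer $k$. Both are elementary given the chosen arc lengths, so the theorem reduces to writing the piecewise formulas cleanly and reading off the covering property.
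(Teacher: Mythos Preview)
Your proposal is correct and follows essentially the same approach as the paper: construct two continuous functions, each returning the correct angle on a large arc and continuously ``patched'' on a small complementary arc, with the two wrong arcs chosen disjoint so the correct regions cover $\so(2)$. The paper's explicit functions use $\atanl$ and $\atanr$ with correct regions $\{a\ge -\tfrac12\}$ and $\{a\le\tfrac12\}$ (i.e., arcs $[-\tfrac{2\pi}{3},\tfrac{2\pi}{3}]$ and $[\tfrac{\pi}{3},\tfrac{5\pi}{3}]$), which differs from your $[-\tfrac{3\pi}{4},\tfrac{3\pi}{4}]$ and $[\tfrac{\pi}{4},\tfrac{7\pi}{4}]$ only in the specific arc lengths and the form of the interpolation on the wrong arcs.
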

\begin{proof}
We give an example of $f_1$ and $f_2$ as follows:
\begin{align}
    \allowdisplaybreaks[4]
    f_1\left(\begin{bmatrix}a&-b\\b&a\\\end{bmatrix}\right)&=\begin{cases}
        2\pi-2\cdot\atanr(b, a) & (a<-\frac{1}{2})\\
        \atanl(b, a) & (a\ge-\frac{1}{2})
    \end{cases}\\\nonumber
    f_2\left(\begin{bmatrix}a&-b\\b&a\\\end{bmatrix}\right)&=\begin{cases}
        \atanr(b,a) & (a\le\frac{1}{2})\\
        \pi-2\cdot\atanl(b, a) & (a>\frac{1}{2})
    \end{cases}
\end{align}
It can be checked that these functions are continuous and that their wrong regions do not overlap.
\end{proof}

Since these functions are continuous, they can be approximated by neural networks. On top of these, we can add a classifier that predicts which function would give the correct output for each input. During training time, these functions and the classifier can be trained jointly: the error of the whole ensemble is the sum of the error of each individual functions, weighted by the probability assigned by the classifier. Now the discontinuity only happens at test time, when we select the output of the function with highest assigned probability. We call this method the \emph{self-selecting ensemble}.

Can a similar approach work for the conversion from 3D rotation matrices to quaternions? It turns out that two or even three functions are not enough:

\begin{thm}
\label{thm:3funcs}
For any three continuous functions $f_1, f_2, f_3: \so(3) \to S^3$, there exists a rotation $R\in \so(3)$ such that $d(R,f_i(R))=\pi$ for all $i\in\{1, 2, 3\}$.
\end{thm}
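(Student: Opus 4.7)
The plan is to reduce the problem to the Borsuk--Ulam theorem on $S^3$. The key observation is that the single-loop IVT argument used for Theorem \ref{thm:1func} produces one ``bad'' point per function, but we need a simultaneous zero of three quantities, which is exactly the setting of Borsuk--Ulam in dimension $3$.

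Concretely, I would lift everything to the universal cover. For each $f_i$, define $g_i : S^3 \to \mathbb{R}$ by
\begin{equation*}
g_i(\mathbf{q}) = \mathbf{q}\cdot f_i(\RQ(\mathbf{q})).
\end{equation*}
Each $g_i$ is continuous because $\RQ$ and $f_i$ are. The crucial property is antipodal symmetry: since $\RQ(-\mathbf{q})=\RQ(\mathbf{q})$, we get $g_i(-\mathbf{q}) = -\mathbf{q} \cdot f_i(\RQ(\mathbf{q})) = -g_i(\mathbf{q})$. Bundle them together as $G = (g_1, g_2, g_3): S^3 \to \mathbb{R}^3$. This $G$ is an odd continuous map from $S^3$ to $\mathbb{R}^3$, so by the Borsuk--Ulam theorem it must have a zero $\mathbf{q}_0 \in S^3$.

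At this point all three inner products $\mathbf{q}_0 \cdot f_i(R_0)$ vanish simultaneously, where $R_0 = \RQ(\mathbf{q}_0)$. Using the distance formula $d(\mathbf{p}, \mathbf{q}) = 2\cos^{-1}|\mathbf{p}\cdot \mathbf{q}|$ recalled just before Theorem \ref{thm:1func}, we get $d(R_0, f_i(R_0)) = 2\cos^{-1}(0) = \pi$ for every $i$, as required.

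The main conceptual step is recognizing that the three scalar obstructions $g_i$ package into an odd map from $S^3$ to $\mathbb{R}^3$ --- once that is seen, Borsuk--Ulam closes the argument in one line. This also explains the sharpness of the bound ``three functions'': with four functions one would need an odd map $S^3 \to \mathbb{R}^4$ to have a zero, which fails in general (the identity is a counterexample), foreshadowing why a self-selecting ensemble of four functions could in principle succeed. The only routine detail to verify is continuity and oddness of each $g_i$, both of which follow immediately from the properties of $\RQ$ and the inner product.
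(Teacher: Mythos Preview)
Your proof is correct and essentially identical to the paper's: define the odd scalar functions $g_i(\mathbf{q})=\mathbf{q}\cdot f_i(\RQ(\mathbf{q}))$, bundle them into an odd map $S^3\to\mathbb{R}^3$, and apply Borsuk--Ulam to obtain a simultaneous zero, then read off $d=\pi$ from the distance formula. The only cosmetic difference is that the paper states Borsuk--Ulam in the ``$V(-\mathbf{q}_0)=V(\mathbf{q}_0)$'' form and then deduces $V(\mathbf{q}_0)=\mathbf{0}$ from oddness, whereas you invoke the equivalent ``odd map has a zero'' formulation directly; your closing remark about why four functions might escape this obstruction is also exactly the intuition the paper exploits in the next theorem.
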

\begin{proof}
Consider functions $v_i: S^3 \to \mathbb{R}$ defined by $v_i(\mathbf{q})=\mathbf{q}\cdot f_i(\RQ(\mathbf{q}))$, for $i=1, 2, 3$. For any
$\mathbf{q} \in S^3$, Since
$\RQ(-\mathbf{q})=\RQ(\mathbf{q})$, $v_i(-\mathbf{q})=-\mathbf{q}\cdot f_i(\RQ(-\mathbf{q}))=-\mathbf{q}\cdot f_i(\RQ(\mathbf{q}))=-v_i(\mathbf{q})$.

Let $V:S^3 \to \mathbb{R}^3$ defined by $V(\mathbf{q})=(v_1(\mathbf{q}), v_2(\mathbf{q}), v_3(\mathbf{q}))$, then $V(-\mathbf{q})=-V(\mathbf{q})$
for any $\mathbf{q} \in S^3$.
By the Borsuk–Ulam theorem,\footnote{See e.g. page 174 of \cite{hatcher2001algebraic}} there exists $\mathbf{q}_0 \in S^3$ such that $V(-\mathbf{q}_0)=V(\mathbf{q}_0)$, then $-V(\mathbf{q}_0)=V(-\mathbf{q}_0)=V(\mathbf{q}_0)$,
so $V(\mathbf{q}_0)=\mathbf{0}$, which means $v_1(\mathbf{q}_0)=v_2(\mathbf{q}_0)=v_3(\mathbf{q}_0)=0$. So for $R_0=\RQ(\mathbf{q}_0)\in \so(3)$, $d(R_0, f_i(R_0))=\pi$ for $i=1, 2, 3$.
\end{proof}

Theorem \ref{thm:3funcs}, with a seemingly simpler proof, implies theorem \ref{thm:1func}. But the proof of Borsuk–Ulam theorem is not simple, and it applies to hyperspheres only while the techniques in theorem \ref{thm:1func} can be useful for other spaces as well, as we will show.

Allowing a fourth function, however, can give us a successful ensemble:

\newcommand{\ThmFourFuncText}{There exists continuous functions $f_1, f_2, f_3, f_4: \so(3) \to S^3$ such that for any rotation $R\in \so(3)$, $\RQ(f_i(R))=R$ for some $i\in\{1, 2, 3, 4\}$.}

\begin{thm}
\label{thm:4funcs}
\ThmFourFuncText
\end{thm}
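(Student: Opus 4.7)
The plan is to construct $f_1,f_2,f_3,f_4$ as continuous extensions of four local sections of $\RQ$ whose domains of validity together cover $\so(3)$. Fix the standard basis $e_0,e_1,e_2,e_3$ of $\mathbb{R}^4$, viewed as unit quaternions. For each $i$ let $H_i=\{\mathbf{q}\in S^3:\mathbf{q}\cdot e_i\ge 1/2\}$ be the closed spherical cap of angular radius $\pi/3$ around $e_i$, and set $A_i=\RQ(H_i)\subseteq \so(3)$. The target of $f_i$ will be forced into $H_i$, so on the complement of $A_i$ the value of $f_i$ is allowed to be incorrect, which is what makes the construction possible in light of Theorem \ref{thm:1func}.

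First I would verify that $A_0,A_1,A_2,A_3$ cover $\so(3)$. For any $\mathbf{q}\in S^3$ we have $\sum_{i=0}^{3}(\mathbf{q}\cdot e_i)^2=1$, so $|\mathbf{q}\cdot e_i|\ge 1/2$ for some $i$; thus one of $\pm\mathbf{q}$ lies in $H_i$, and $\RQ(\mathbf{q})\in A_i$. Next I would build the local sections. Since $\mathbf{q}\cdot e_i\ge 1/2$ and $-\mathbf{q}\cdot e_i\ge 1/2$ cannot hold simultaneously, the caps $H_i$ and $-H_i$ are disjoint, so $\RQ$ restricted to the compact cap $H_i$ is a continuous bijection onto the Hausdorff space $A_i$ and hence a homeomorphism. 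Let $s_i:A_i\to H_i$ denote its continuous inverse, a local section of $\RQ$ defined on the closed set $A_i$.

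The core step is extending each $s_i$ from $A_i$ to a continuous map $f_i:\so(3)\to S^3$ still taking values in $H_i$. Projecting out the $e_i$-coordinate identifies $H_i$ homeomorphically with the closed ball $B=\{v\in\mathbb{R}^3:\|v\|^2\le 3/4\}$. Under this identification $s_i$ becomes a continuous map $\tilde{s}_i:A_i\to B$. Apply the Tietze extension theorem to each coordinate to obtain $\hat{s}_i:\so(3)\to\mathbb{R}^3$ extending $\tilde{s}_i$; then compose with the nearest-point projection $\pi_B:\mathbb{R}^3\to B$, which is continuous and acts as the identity on $B$, to obtain a continuous extension $\so(3)\to B$. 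Finally compose back with the homeomorphism $B\cong H_i\hookrightarrow S^3$ to define $f_i$. For every $R\in\so(3)$, choosing $i$ with $R\in A_i$ yields $\RQ(f_i(R))=\RQ(s_i(R))=R$.

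The main obstacle is the extension step, because Theorem \ref{thm:1func} rules out any global continuous section of $\RQ$ on $\so(3)$. The idea that circumvents this is to use the contractibility of the small cap $H_i$: one is extending a map into a 3-disk rather than into all of $S^3$, and closed 3-disks are absolute retracts, so extensions always exist. Four caps of angular radius $\pi/3$ are exactly what is needed to cover $S^3$ in antipodal pairs, and hence $\so(3)$, matching the lower bound given by Theorem \ref{thm:3funcs}.
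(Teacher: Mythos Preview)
Your proof is correct. You and the paper use the same underlying covering of $\so(3)$ by the four sets $A_i=\RQ(\{\mathbf{q}:|\mathbf{q}\cdot e_i|\ge 1/2\})$, established via the pigeonhole argument $\sum(\mathbf{q}\cdot e_i)^2=1$. The difference lies entirely in how the local sections $s_i:A_i\to H_i$ are extended to all of $\so(3)$. The paper writes down explicit piecewise formulas, e.g.
\[
f_1(a,b,c,d)=
\begin{cases}
(a,b,c,d) & a\ge\tfrac12,\\
N(1-a,2ab,2ac,2ad) & 0\le a<\tfrac12,\\
N(1+a,2ab,2ac,2ad) & -\tfrac12\le a<0,\\
(-a,-b,-c,-d) & a<-\tfrac12,
\end{cases}
\]
and checks continuity and the even-in-$\mathbf{q}$ property by hand. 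You instead observe that $H_i$ is homeomorphic to a closed $3$-ball, hence an absolute retract, and invoke the Tietze extension theorem (followed by the nearest-point retraction onto the ball). Your argument is cleaner and makes transparent why four functions suffice: it is exactly because $S^3$ can be covered by four antipodally-disjoint contractible caps. This is precisely the viewpoint the paper adopts later when discussing Proposition~\ref{thm:sym-four} and the Lusternik--Schnirelmann category. The paper's explicit formulas, on the other hand, buy concreteness appropriate to the neural-network setting and make the even symmetry $f_i(-\mathbf{q})=f_i(\mathbf{q})$ visible by inspection.
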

The proof is by construction. See appendix \ref{sec:proofs}.

Is the same true for Euler angles? Similar to corollary \ref{thm:1func_e} we can conclude that an ensemble of 3 functions will not work. But to find an ensemble of 4 functions that does work, we have to additionally deal with gimbal lock. We first show that there is no correct continuous conversion from rotation matrices to Euler angles when the domain contains a gimbal locked position in its interior.

\newcommand{\ThmGimbalText}{Let $U$ be any neighborhood of $\mathbf{R}_{xyz}(0,\frac{\pi}{2},0)$ in $\so(3)$. There exists no continuous function $f:U\to\mathbb{R}^3$ such that $\mathbf{R}_{xyz}(f(R))=R$ for every $R\in U$.}

\begin{thm}
\label{thm:gimbal}
\ThmGimbalText
\end{thm}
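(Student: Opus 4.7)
The plan is to derive a contradiction from the assumption that $f : U \to \mathbb{R}^3$ is a continuous section of $\mathbf{R}_{xyz}$ on a neighborhood $U$ of $R_0 := \mathbf{R}_{xyz}(0, \pi/2, 0)$. Concretely, I would exhibit a one-parameter family of rotations arbitrarily close to $R_0$ whose only Euler-angle preimages are uniformly far from $p_0 := f(R_0)$, and use continuity of $f$ to reach an impossibility.

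The first step is to pin down the fiber. From the commutation identity $R_y(\pi/2) R_x(\alpha) = R_z(-\alpha) R_y(\pi/2)$ one computes $\mathbf{R}_{xyz}(\alpha, \pi/2, \gamma) = R_z(\gamma - \alpha) R_y(\pi/2)$, so $\mathbf{R}_{xyz}^{-1}(R_0) = \bigcup_{k\in\mathbb{Z}} \{(\alpha, \pi/2 + 2k\pi, \alpha) : \alpha \in \mathbb{R}\}$. In particular $p_0 = (\alpha_0, \pi/2 + 2k\pi, \alpha_0)$ for some $\alpha_0 \in \mathbb{R}$ and $k \in \mathbb{Z}$; after a $2\pi$-shift in $\beta$ I may assume the $\beta$-coordinate of $p_0$ is $\pi/2$, and I will write the argument in full for $\alpha_0 = 0$.

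The second step is to produce the missing family. Take $g_\eta := R_x(\eta) R_0 = R_0 R_z(\eta)$ for $\eta \in (0, \pi/2)$; note $d(R_0, g_\eta) = \eta$. Matching matrix entries in $R_z(\gamma) R_y(\beta) R_x(\alpha) = g_\eta$ gives $-\sin\beta = R_{31} = -\cos\eta$, $\cos\beta\cos\gamma = R_{11} = 0$, and $\cos\alpha\cos\beta = R_{33} = 0$. The first equation forces $\cos\beta = \pm\sin\eta \neq 0$ (for $\eta \neq 0$), so the remaining two force $\cos\alpha = \cos\gamma = 0$, i.e., $\alpha, \gamma \in \tfrac{\pi}{2} + \pi\mathbb{Z}$. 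Hence every Euler-angle preimage of $g_\eta$ has $|\alpha|, |\gamma| \ge \pi/2$, and consequently lies outside $B_\delta(p_0)$ whenever $\delta < \pi/2$.

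The third step closes the argument. Fix any $\delta \in (0, \pi/2)$; continuity of $f$ at $R_0$ provides $\varepsilon > 0$ with $f(B_\varepsilon(R_0)) \subseteq B_\delta(p_0)$, giving $B_\varepsilon(R_0) = \mathbf{R}_{xyz}(f(B_\varepsilon(R_0))) \subseteq \mathbf{R}_{xyz}(B_\delta(p_0))$; but for any $\eta \in (0, \min(\varepsilon, \pi/2))$ the rotation $g_\eta$ sits inside $B_\varepsilon(R_0)$ yet outside $\mathbf{R}_{xyz}(B_\delta(p_0))$, the desired contradiction. I expect the main subtlety to be the case $\alpha_0 \neq 0$, since the shift $(\alpha, \gamma) \mapsto (\alpha - \alpha_0, \gamma - \alpha_0)$ is not a symmetry of $\mathbf{R}_{xyz}$. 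The cleanest remedy is to replace the obstruction axis $\hat{x}$ by its $\alpha_0$-rotated version and set $g_\eta := \exp\!\bigl(\eta(\cos\alpha_0\, L_x + \sin\alpha_0\, L_y)\bigr) R_0$ (with $L_x, L_y$ the standard $\so(3)$ generators); the same matrix-entry matching, now carried out with the extra $R_z(\alpha_0)$ factors present, exhibits the same $|\alpha - \alpha_0|, |\gamma - \alpha_0| \ge \pi/2$ bound on the preimages and so closes the general case.
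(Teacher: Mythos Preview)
Your approach is correct and genuinely different from the paper's. Both proofs ultimately exploit the family $\mathbf{R}_{xyz}(\theta,\phi,\theta)$ of rotations that collapse to $R_0$ as $\phi\to\pi/2$, but they use it in opposite ways.

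The paper runs a winding-number argument over the whole $\theta$-circle: for fixed $\phi$ just below $\pi/2$ the map $\theta\mapsto\mathbf{R}_{xyz}(\theta,\phi,\theta)$ is a loop in $U$, and along it any section must have first coordinate congruent to $\theta$ mod $\pi$; hence $g(\theta,\phi)=(f^{(1)}-\theta)/\pi$ is continuous and integer-valued, so constant, yet $g(0,\phi)-g(2\pi,\phi)=2$. This is uniform in $p_0=f(R_0)$ --- no case analysis is needed.

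You instead pick a single point on that circle (your $g_\eta$ is exactly $\mathbf{R}_{xyz}(\alpha_0+\tfrac{\pi}{2},\,\tfrac{\pi}{2}-\eta,\,\alpha_0+\tfrac{\pi}{2})$) and argue by one direct $\varepsilon$--$\delta$ step at $R_0$. This is more elementary --- no loop, no integer-valued function --- but the price is that the perturbation must be tailored to $\alpha_0$, which is why you need the separate treatment of general $\alpha_0$. Your rotated-axis fix does the job: the third row of $\exp\!\bigl(\eta(\cos\alpha_0\,L_x+\sin\alpha_0\,L_y)\bigr)R_0$ comes out as $(-\cos\eta,\;\sin\eta\cos\alpha_0,\;-\sin\eta\sin\alpha_0)$, forcing $\tan\alpha=-\cot\alpha_0$, i.e.\ $\alpha\in\alpha_0+\tfrac{\pi}{2}+\pi\mathbb{Z}$, so every preimage is at least $\pi/2$ away from $p_0$ in the $\alpha$-coordinate.

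One small inaccuracy worth tightening: the fiber over $R_0$ is $\{(\alpha,\tfrac{\pi}{2}+2k\pi,\gamma):\gamma-\alpha\in 2\pi\mathbb{Z}\}$, not just those triples with $\gamma=\alpha$. This does not affect your argument, since the contradiction runs through the $\alpha$-coordinate alone.
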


See appendix \ref{sec:proofs} for proof.

The same conclusion can be drawn near any $\mathbf{R}_{xyz}(\alpha,\beta,\gamma)$ where $\beta=\pm\frac{\pi}{2}$. Since gimbal locked positions has to be correctly handled, this problem cannot be solved by simply adding more functions that all have the same gimbal locked positions. Notice that if the order of elemental rotations is $x$-$z$-$y$, then the gimbal locked positions will be different. Analogous to $\mathbf{Q}_{xyz}$ we define $\mathbf{Q}_{xzy}$ as follows:

\begin{equation}
\mathbf{Q}_{xzy}(\alpha, \beta, \gamma)=(\cos\frac{\gamma}{2}+\mathbf{j}\sin\frac{\gamma}{2})(\cos\frac{\beta}{2}+\mathbf{k}\sin\frac{\beta}{2})(\cos\frac{\alpha}{2}+\mathbf{i}\sin\frac{\alpha}{2})
\end{equation}

and also $\mathbf{R}_{xzy}(\alpha, \beta, \gamma)=\RQ(\mathbf{Q}_{xzy}(\alpha, \beta, \gamma))$. We show the existence of a mixed Euler angle ensemble that gives the correct conversion:

\newcommand{\ThmFourFuncEulerText}{There exists continuous functions $f_1, f_2, f_3, f_4: \so(3) \to \mathbb{R}^3$ such that for any rotation $R\in \so(3)$, at least one of the following is equal to $R$: $\mathbf{R}_{xyz}(f_1(R))$, $\mathbf{R}_{xyz}(f_2(R))$, $\mathbf{R}_{xzy}(f_3(R))$ and $\mathbf{R}_{xzy}(f_4(R))$.}

\begin{thm}
\label{thm:4funcs-e}
\ThmFourFuncEulerText
\end{thm}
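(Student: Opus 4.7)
The plan combines the self-selecting ensemble of Theorem~\ref{thm:2d-2funcs} with the two Euler orderings, using each ordering to cover the gimbal-lock locus of the other. The key geometric fact is that the gimbal-lock loci $G_{xyz}=\{R\in\so(3):Re_1\in\{\pm e_3\}\}$ and $G_{xzy}=\{R\in\so(3):Re_1\in\{\pm e_2\}\}$ are disjoint, so the open sets $V_{xyz}:=\so(3)\setminus G_{xyz}$ and $V_{xzy}:=\so(3)\setminus G_{xzy}$ cover $\so(3)$. This is exactly what is needed to sidestep Theorem~\ref{thm:gimbal}: each chart's singular set lies inside the regular region of the other.

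On $V_{xyz}$ the Euler triple $(\alpha,\beta,\gamma)$ with $\beta\in(-\pi/2,\pi/2)$ is well-defined modulo $(2\pi\mathbb{Z})^2$ in $(\alpha,\gamma)$. Let $g_1,g_2:\so(2)\to\mathbb{R}$ be the ensemble functions from the proof of Theorem~\ref{thm:2d-2funcs}, and define $f_i(R)=(g_i(\mu_\alpha(R)),\,\beta(R),\,g_i(\mu_\gamma(R)))$, where $\mu_\alpha,\mu_\gamma$ extract the 2D rotation matrices of the $\alpha$ and $\gamma$ components from $R$. Each $f_i$ is continuous on $V_{xyz}$ and gives correct $xyz$ Euler angles outside a ``wrong region'' $W_i$. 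A short computation using the $\pm\tfrac{1}{2}$ thresholds of Theorem~\ref{thm:2d-2funcs} shows that $W_1\cap W_2$ is contained in the union of two slabs $\{|\alpha|>2\pi/3,\,|\gamma|<\pi/3\}$ and $\{|\alpha|<\pi/3,\,|\gamma|>2\pi/3\}$. Extend $f_1,f_2$ continuously to all of $\so(3)$ by tapering with a bump function near $G_{xyz}$ (the extended values on $G_{xyz}$ are irrelevant since they will be covered by $f_3,f_4$). Construct $f_3,f_4$ analogously on $V_{xzy}$ using $\mathbf{R}_{xzy}$, but precomposing $g_1,g_2$ with a fixed $\pi/2$ rotation of $\so(2)$ so that the branch locations of $g_1,g_2$ are rotated by a quarter-turn on both the $\alpha'$ and $\gamma'$ inputs.

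The final step, and the main obstacle, is to verify $(W_1\cap W_2)\cap(W_3\cap W_4)=\emptyset$, equivalently that every $R\in\so(3)$ lies in the correct region of some $f_i$. This reduces to a finite case analysis: for a rotation in one of the $xyz$ slab conditions, translate the constraints into the $xzy$ parameters (using that the two decompositions share $Re_1$ but assign different twist angles) and verify that the shifted thresholds of the $xzy$ ensemble are never simultaneously violated. The $\pi/2$ shift is essential here: without it, highly symmetric rotations such as $R_x(\pi)$ (which satisfies $\alpha=\alpha'=\pi$ and $\gamma=\gamma'=0$) would lie in $W_1\cap W_2\cap W_3\cap W_4$; with the shift, $R_x(\pi)$ is instead covered by $f_3$, since the shifted angle $\alpha'+\pi/2$ lands in the correct region of $g_1$. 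Extending this case analysis across all rotations — which is the fiddly but routine part of the proof — completes the construction.
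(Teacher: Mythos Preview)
Your proposal has a genuine gap: the ``fiddly but routine'' case analysis does not go through, because the diagonal pairing $f_i=(g_i(\mu_\alpha),\beta,g_i(\mu_\gamma))$ does not yield enough coverage. Concretely, take the rotation
\[
R=\begin{pmatrix}0&1&0\\0&0&1\\1&0&0\end{pmatrix}=\mathbf{R}_{xzy}\!\left(-\tfrac{\pi}{2},\,0,\,\tfrac{\pi}{2}\right).
\]
Since $M_{31}=1$ this lies exactly on the gimbal-lock locus $G_{xyz}$, so by your own tapering step both $f_1$ and $f_2$ are wrong here. In $xzy$ coordinates $(\alpha',\beta',\gamma')=(-\tfrac{\pi}{2},0,\tfrac{\pi}{2})$. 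With a $+\tfrac{\pi}{2}$ shift, $\tilde g_1$ is wrong on the interval $(\tfrac{\pi}{6},\tfrac{5\pi}{6})$ and $\tilde g_2$ is wrong on $(-\tfrac{5\pi}{6},-\tfrac{\pi}{6})$; hence $\tilde g_1(\gamma')=\tilde g_1(\tfrac{\pi}{2})$ is wrong (so $f_3$ fails) and $\tilde g_2(\alpha')=\tilde g_2(-\tfrac{\pi}{2})$ is wrong (so $f_4$ fails). All four functions are wrong at $R$. The same rotation defeats a $-\tfrac{\pi}{2}$ shift by symmetry, and in fact no single shift works: a shift small enough to keep both $\gamma'=\pm\tfrac{\pi}{2}$ in the common correct region of $\tilde g_1,\tilde g_2$ (needed to cover $G_{xyz}$) is also small enough that $R_x(\pi)$, with $(\alpha',\gamma')=(\pi,0)$, lands in $W_3\cap W_4$.

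The structural problem is the coupling: by applying the \emph{same} $g_i$ to both $\alpha$ and $\gamma$, the correct region of each $f_i$ is a square of side $\tfrac{4\pi}{3}$ in the $(\alpha,\gamma)$-torus, and two such diagonally placed squares leave two $\tfrac{2\pi}{3}\times\tfrac{2\pi}{3}$ holes. On the gimbal-lock circle $\{\gamma'=\pm\tfrac{\pi}{2},\ \alpha'\in S^1\}$ one of these holes is unavoidably hit. The paper's construction avoids this by decoupling the two angles: for each Euler ordering it splits only on $\alpha$ (via $\atanl$ versus $\atanr$), while both functions share a single wide $\gamma$-window; the resulting correct regions are slabs that are full-width in $\gamma$, so two of them cover the entire $\alpha$-circle. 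Coverage of $\so(3)$ is then verified by a case split on which of $\pm M_{11},\pm M_{21},\pm M_{31}$ is maximal, rather than by translating between the two Euler parametrizations.
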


The proof is by construction. See appendix \ref{sec:proofs}.

In section \ref{sec:exp-1}, we show with experiments that a neural network can successfully learn ensembles of four quaternion representations or ensembles of four mixed-type Euler angle representations that gives small error for rotation matrix conversion over the entire $\so(3)$.

\subsection{Input Symmetry and Effective Input Topology}

The discussion above is of theoretical interest, but would be of little practical relevance if the discontinuity of quaternion and Euler angle representations can be solved by simply using a representation that is continuous, such as the embedding of $\so(3)$ into $\mathbb{R}^5$ proposed in \cite{zhou2019continuity}. We show however, that due to the combined effect of a symmetry in the input and a symmetry of the neural network function, such embeddings can become ineffective.

As in \cite{zhou2019continuity}, we consider the problem of estimating the rotation of a target point cloud relative to a reference point cloud. Since by giving both the target and the reference and considering all possible point cloud at once we will be faced with a extremely complicated input space, for our theoretical analysis we focus on a very simple case: the reference point cloud is fixed, so that the network is only given the target point cloud which is a rotated version of the fixed reference point cloud.

At a first glimpse the topological structure of this problem is exactly the same as converting a 3D rotation matrix into other representations, since there is a homeomorphism between the input space and $\so(3)$. However, the neural network might see a different picture. In a point cloud, there is no assumption of any relationship between different points, and it is considered desirable for the neural network to be invariant under a permutation of input points, which is a design principle of some popular neural network architectures for point cloud processing, e.g. PointNet \cite{qi2017pointnet}.

This behavior causes unexpected consequences. If the input point cloud itself possesses nontrivial rotational symmetry, then different rotations on this point cloud might result in the same set of points, differing only in order. Since the neural network is oblivious to the order of input points, these point clouds generated by different rotations are effectively the same input to the network.

Let $X\subset\mathbb{R}^3$ be a 3D point cloud and $R\in\so(3)$ be a rotation. Let $RX=\{Rx\,|\,x\in X\}$. Then the symmetry group of $X$, $\sym(X)$, is defined by $\sym(X)=\{R\in\so(3)\,|\,RX=X\}$. It is a subgroup of $\so(3)$. Take $X$ as the reference point cloud. Then if two rotations $R_1, R_2\in\so(3)$ generates the same target point cloud, then $R_1X=R_2X$, so $X=R_1^{-1}R_2X$, so $R_1^{-1}R_2\in\sym(X)$, that is, $R_1$ and $R_2$ belong to the same left coset of $\sym(X)$ in $\so(3)$. The reverse is also true.

So for the network, two inputs are equivalent if and only if the rotations that generate them belong to the same left coset of $\sym(X)$. The left cosets of $\sym(X)$ in $\so(3)$ forms a homogeneous space, denoted $\so(3)/\sym(X)$.\footnote{A quotient group is denoted the same way but $\sym(X)$ needs not be a normal subgroup of $\so(3)$ in general so here we do not mean a quotient group.} When $X$ is finite, except for the degenerate case where all points of $X$ lie on a line, $\sym(X)$ must be finite. If $G$ is a finite subgroup of $\so(3)$, then $\so(3)$ is a covering space of $\so(3)/G$, with covering map $p_G:\so(3)\to\so(3)/G,R\mapsto RG$.

We can show that when $\sym(X)$ is nontrivial, a network that is invariant under input point permutation cannot always recover the correct rotation matrix. Here by ``correct'' we mean the rotation given by the network need not be the same as the one used for generating the input point cloud, but must generate the same point cloud up to permutation. $p_G^{-1}[\mathcal{R}]$ denotes the preimage of $\mathcal{R}$ under $p_G$.

\newcommand{\ThmSymText}{Let $G$ be a nontrivial finite subgroup of $\so(3)$, then there does not exist continuous function $f:\so(3)/G\to \so(3)$ such that for every $\mathcal{R}\in\so(3)/G$, $p_G(f(\mathcal{R}))=\mathcal{R}$.}

\begin{thm}
\label{thm:sym}
\ThmSymText
\end{thm}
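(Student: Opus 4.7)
The plan is a short connectedness argument using the covering map structure already noted in the paper. The map $p_G:\so(3)\to\so(3)/G$ has discrete fiber $RG$ of $|G|$ points over each coset, and a continuous section would amount to a continuous choice of sheet. When the total space $\so(3)$ is connected and $|G|>1$, this should be impossible.

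Concretely, I would suppose for contradiction that a continuous section $f:\so(3)/G\to\so(3)$ with $p_G\circ f=\mathrm{id}$ exists, and define $g:\so(3)\to G$ by $g(R)=R^{-1}f(p_G(R))$. Since $f(p_G(R))$ must lie in the fiber $p_G^{-1}(RG)=RG$, the value $g(R)$ indeed belongs to $G$. The map $g$ is continuous as a composition of continuous operations (inversion and multiplication in $\so(3)$, together with $p_G$ and $f$). Because $G$ is finite and $\so(3)$ is Hausdorff, $G$ inherits the discrete topology as a subspace of $\so(3)$. Since $\so(3)$ is connected, $g$ must be constant; call its common value $g_0\in G$. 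This yields the identity $f(p_G(R))=Rg_0$ for every $R\in\so(3)$.

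The contradiction then comes from applying this identity both to $R$ and to $Rg$ for an arbitrary $g\in G$. As $p_G(Rg)=p_G(R)$, we obtain $Rg_0=f(p_G(R))=f(p_G(Rg))=(Rg)g_0$, which forces $g=e$. Since $g\in G$ was arbitrary, $G$ must be trivial, contradicting the hypothesis.

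There is no real obstacle here; every step is routine (continuity of $g$, discreteness of $G$ in $\so(3)$, and connectedness of $\so(3)$) or already supplied by the paper (the covering map structure of $p_G$). The one conceptual point worth highlighting is that a section of a finite covering translates into a continuous map from the connected total space to a discrete fiber, and it is this clash between connectedness and discreteness that closes the argument.
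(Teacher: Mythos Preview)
Your argument is correct and takes a genuinely different route from the paper. The paper assumes a section $f$ exists, picks two distinct points $R_0,R_1$ in a single fiber, connects them by a path $r$ in $\so(3)$, and observes that $h=p_G\circ r$ is a loop in $\so(3)/G$; by the \emph{unique path lifting} property, both $r$ and $f\circ h$ are lifts of $h$ starting at $R_0$, hence must coincide, contradicting $r(1)=R_1\neq R_0=f(h(1))$. Your proof instead packages the obstruction as a continuous map $g(R)=R^{-1}f(p_G(R))$ from the connected space $\so(3)$ into the finite (hence discrete) set $G$, forcing $g$ to be constant and then deriving $G=\{e\}$.

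Your version is more elementary: it avoids invoking the lifting lemma and uses only connectedness and the discreteness of a finite subset of a Hausdorff space. The paper's version is chosen deliberately to mirror the proof of Theorem~\ref{thm:1func} and to set up the machinery (lifting along paths, tracking endpoints) that is later refined in the appendix to extract explicit error bounds for each symmetry group $G$. So both proofs are clean; yours is shorter for the bare nonexistence statement, while the paper's path-lifting framework is what generalizes to the quantitative results.
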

\begin{proof}
Assume that there exists such a function $f$. Choose any $\mathcal{R}\in\so(3)/G$. Let $R_0=f(\mathcal{R})$. Then $R_0\in p_G^{-1}[\mathcal{R}]$. Since $G$ is a nontrivial group, $\so(3)$ is a nontrivial covering space of $\so(3)/G$. So $|p_G^{-1}[\mathcal{R}]|>1$. Select any $R_1\neq R_0$ from $p_G^{-1}[\mathcal{R}]$. $\so(3)$ is path-connected, so there is a path in $\so(3)$ from $R_0$ to $R_1$, that is, there exists continuous function $r:[0,1]\to\so(3)$ such that $r(0)=R_0$ and $r(1)=R_1$. Fix such an $r$ and let $h=p_G\circ r$. Then $h$ is a path in $\so(3)/G$ and $h(0)=h(1)=\mathcal{R}$.

By the lifting property of covering spaces, $h$ lifts to a unique path in $\so(3)$ starting from $R_0$, which is just $r$. That is, $r$ is the unique continuous function such that $h=p_G\circ r$ and $r(0)=R_0$. We also have $p_G\circ (f\circ h)=(p_G\circ f)\circ h=\mathbf{1}\circ h=h$ and $f(h(0))=f(\mathcal{R})=R_0$. Since $r$ is the unique continuous function such that $h=p_G\circ r$ and $r(0)=R_0$, we must have $r=f\circ h$. But $r(1)=R_1\neq R_0=f(h(1))$, which is a contradiction.
\end{proof}

This is essentially the same proof as in theorem \ref{thm:1func} but without error bounds. Such bounds can be established, but the techniques are much more complicated. We discuss about this in appendix \ref{sec:proofs}.

Similar to corollary \ref{thm:1func_e}, for an embedding $g:\so(3)\to\mathbb{R}^n$, since we can continuously map from the image of $g$ back to $\so(3)$, there exists no continuous function $f$ that finds such an embedding of the correct rotation from the input point cloud, for otherwise $g^{-1}\circ f$ gives a continuous function that computes a correct rotation matrix.

\begin{cor}
Let $G$ be a nontrivial finite subgroup of $\so(3)$ and $g:\so(3)\to\mathbb{R}^n$ be an embedding, then there does not exist continuous function $f:\so(3)/G\to g[\so(3)]$ such that for every $\mathcal{R}\in\so(3)/G$, $p_G(g^{-1}(f(\mathcal{R})))=\mathcal{R}$.
\end{cor}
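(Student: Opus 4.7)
The plan is to reduce this corollary directly to Theorem \ref{thm:sym} by composing with the continuous inverse of the embedding $g$. Because $g$ is an embedding, it is a homeomorphism onto its image $g[\so(3)]$, so the partial inverse $g^{-1}:g[\so(3)]\to\so(3)$ is well-defined and continuous. The hypothesis $p_G(g^{-1}(f(\mathcal{R})))=\mathcal{R}$ already makes it clear that $f$ is meant to land in $g[\so(3)]$ precisely so that $g^{-1}\circ f$ makes sense.

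First I would assume for contradiction that such a continuous $f:\so(3)/G\to g[\so(3)]$ exists. Then I would form the composition $F=g^{-1}\circ f:\so(3)/G\to\so(3)$. Since both $f$ and $g^{-1}$ are continuous, $F$ is continuous. The hypothesis immediately gives $p_G(F(\mathcal{R}))=p_G(g^{-1}(f(\mathcal{R})))=\mathcal{R}$ for every $\mathcal{R}\in\so(3)/G$, so $F$ is exactly a continuous section of the covering map $p_G$.

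By Theorem \ref{thm:sym}, no such section exists when $G$ is a nontrivial finite subgroup of $\so(3)$, which is the present hypothesis. This contradiction finishes the proof.

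There is essentially no main obstacle here once one observes that ``$g$ is an embedding'' supplies a continuous left inverse on its image; the corollary is really just the functorial statement that adding an extra homeomorphism in the middle of the diagram cannot create a section where none existed. The only minor point to double-check is that $f$ is required to take values in $g[\so(3)]$ (not just in $\mathbb{R}^n$), which is exactly how the corollary is phrased so that $g^{-1}\circ f$ is defined everywhere.
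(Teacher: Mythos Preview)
Your proposal is correct and matches the paper's own argument: the paper also observes that since $g$ is an embedding one can continuously map $g[\so(3)]$ back to $\so(3)$ via $g^{-1}$, so that $g^{-1}\circ f$ would be a continuous section of $p_G$, contradicting Theorem~\ref{thm:sym}. There is nothing to add.
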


In particular, This means using the 6D or 5D rotation representations proposed in \cite{zhou2019continuity}, which are embeddings of $\so(3)$ in $\mathbb{R}^6$ or $\mathbb{R}^5$, does not resolve the problem. However, the self-selecting ensemble can solve this problem. We state the following without a formal proof:

\newcommand{\PropSymFourText}{Let $G$ be a finite subgroup of $\so(3)$, then there exists continuous functions $f_1,f_2,f_3,f_4:\so(3)/G\to \so(3)$ such that for every $\mathcal{R}\in\so(3)/G$, $p_G(f_i(\mathcal{R}))=\mathcal{R}$ for some $i\in\{1,2,3,4\}$.}

\begin{pro}
\label{thm:sym-four}
\PropSymFourText
\end{pro}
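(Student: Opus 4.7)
My plan is to extend the construction behind Theorem \ref{thm:4funcs} to the covering $p_G:\so(3)\to\so(3)/G$, exploiting that the base is a 3-manifold. Because $G$ is finite and acts freely on $\so(3)$ by right multiplication, $X:=\so(3)/G$ is a compact 3-manifold without boundary and $p_G$ is a regular covering of degree $|G|$ in which every point of $X$ has an evenly covered open neighborhood.

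First I would cover $X$ by four open sets each admitting a continuous local section of $p_G$. Take a smooth triangulation $T$ of $X$ fine enough that the open star of every vertex of the barycentric subdivision $\mathrm{sd}\,T$ lies inside an evenly covered chart. Vertices of $\mathrm{sd}\,T$ inherit a natural $4$-coloring from the dimension ($0$, $1$, $2$, or $3$) of the simplex of $T$ they represent, and two vertices of the same color are never adjacent in $\mathrm{sd}\,T$. Hence for each color $i\in\{1,2,3,4\}$ the union $V_i$ of open stars of $i$-colored vertices is a disjoint union $V_i=\bigsqcup_j V_{ij}$ of small open balls, and the $V_i$ cover $X$ because every top-dimensional simplex of $\mathrm{sd}\,T$ has all four colors present. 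On each $V_{ij}$ choose one sheet of $p_G^{-1}(V_{ij})$ to obtain a continuous local section $s_{ij}:V_{ij}\to\so(3)$, and glue disjointly into a section $s_i:V_i\to\so(3)$. By normality I would shrink to a closed cover $\{A_i\}$ with $A_i\subset V_i$ and $\bigcup_i A_i=X$.

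The main obstacle is extending each $s_i|_{A_i}$ to a continuous $f_i:X\to\so(3)$, since $\so(3)\cong\mathbb{RP}^3$ is not contractible. I would use that $\so(3)$ is a compact ANR, so $s_i|_{A_i}$ automatically extends to an open neighborhood of $A_i$, and then extend cell by cell over a CW structure on $X\setminus A_i$. Since $\pi_0(\so(3))=\pi_2(\so(3))=0$, the obstructions over $1$- and $3$-cells vanish, leaving only a potential class in $H^2(X,A_i;\mathbb{Z}/2)$. If this class is nonzero for a given choice of sheets, I would modify the choice of sheet on selected $V_{ij}$ using the freedom of the deck transformation group $G$ to shift the obstruction. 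Verifying that a combination of sheet choices and, if necessary, further refinement of the triangulation can always kill the obstruction for each of the four functions is where I expect the real technical work to lie; this step closely mirrors, in a more abstract setting, the explicit construction used in the proof of Theorem \ref{thm:4funcs}.
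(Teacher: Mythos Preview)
Your barycentric-subdivision cover is a perfectly good concrete realization of the bound $\mathrm{cat}(\so(3)/G)\le 4$, which is exactly what the paper invokes abstractly (citing Takens' result that $\mathrm{cat}(X)\le\dim X+1$ for smooth compact manifolds). On the covering step you and the paper agree in substance: yours is more self-contained, the paper's is shorter.

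Where you diverge is the extension step, and here you make life harder than necessary. Each open star $V_{ij}$ is contractible, so the local section $s_{ij}:V_{ij}\to\so(3)$ is null-homotopic; since $\so(3)$ is path-connected, the whole section $s_i$ on $V_i=\bigsqcup_j V_{ij}$ is null-homotopic as well. A null-homotopic map on a closed subset of a CW pair always extends to the ambient space: blend to a fixed constant using the null-homotopy together with a bump function that is $0$ on $A_i$ and $1$ outside $V_i$. This is precisely the ``margin then constant'' trick the paper points to from the proof of Theorem~\ref{thm:4funcs-e}. Consequently there is no $H^2(X,A_i;\mathbb{Z}/2)$ obstruction to confront, and your proposed remedy of reshuffling sheets via deck transformations is unnecessary --- and would not help anyway, since on a contractible $V_{ij}$ every choice of sheet gives a null-homotopic map, so the obstruction class is unchanged. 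The ``real technical work'' you anticipate simply does not materialize once you observe null-homotopy; the paper's sketch exploits this directly by taking each $U_i$ contractible from the start and letting $f_i$ relax to a constant outside.
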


The idea is that to each $f_i$ is assigned a contractible subset of $\so(3)/G$ on which they give the ``correct'' output. The values on the rest of $\so(3)/G$ are such that $f_i$ is continuous on $\so(3)/G$. Then the $f_i$'s satisfy the requirement if these contractible subsets collectively cover $\so(3)/G$. We discuss more about this in appendix \ref{sec:proofs}.

In section \ref{sec:exp-2}, we test rotation estimation for point clouds using different representations, on one point cloud with trivial symmetry and one with nontrivial rotational symmetry. We show that for the nontrivial case, an ensemble is necessary for the 5D and 6D embeddings as well as quaternions.

\section{Experiments}
\label{sec:exp}

\subsection{Converting Rotation Matrices}
\label{sec:exp-1}

We test the accuracy of converting a 3D rotation matrix into various representations with neural networks, including ensembles. We use an MLP with 5 hidden layers of size 128 each. The size of the output layer varies according to the representation. For ensembles, each individual function as well as the classifier share all their computations except for the output layer, so the overhead of an ensemble over a single network is tiny.

For a single network, the loss function is simply the rotation distance between the input and the output. For an ensemble of $n$ functions, let each individual function in the ensemble be $f_i$ and the classifier be $g$. Here we take the ``raw'' output vector of the classifier, without converting it into a distribution with e.g. $\mathrm{softmax}$. The loss of the whole ensemble on one input rotation $R$ is defined as
\begin{equation}
\label{eqn:loss}
\mathcal{L}(R)=\sum_{i=1}^n \max\{0, g^{(i)}(R)\}\cdot d(R,f_i(R))+\max\{0, 1-\sum_{i=1}^n g^{(i)}(R)\}\cdot \mathcal{L}_{\text{max}}
\end{equation}
where $\mathcal{L}_{\text{max}}=\pi$ is the maximum possible distance between two 3D rotations. At test time, $f_k(R)$ is selected as the output of the ensemble where $k=\mathop{\mathrm{argmax}}_i g^{(i)}(R)$.

We train each network using Adam with learning rate $10^{-4}$ and batch size $256$ for $500,000$ iterations. For training and testing, we sample uniformly from $\so(3)$ (see appendix \ref{sec:rand-rot} for some notes). We sample $100$ million random rotations for testing. 

Figure \ref{fig:t1-all} shows the semi-log plot of errors of each representation by percentile. Mean and maximum errors are given in table \ref{tab:t1-all}. The maximum error is also marked in the graph for clarity. Here we compare a single quaternion, ensemble of four quaternions, a single set of Euler angles, mixed Euler angle ensemble with two sets each of $x$-$y$-$z$ and $x$-$z$-$y$ Euler angles, the 5D embedding and the 6D embedding. Comparison of ensembles of different sizes can be found in appendix \ref{sec:exp-more}. In particular we will show that ensembles of three networks do not work.

The result does not actually show a maximum error of $180^\circ$ for a single quaternion or a single set of Euler angles. This is because while such high errors are guaranteed to exist, they are nevertheless very rare as in general the set of such inputs has measure zero and will almost never be encountered by uniform random sampling. We can see that while using a single quaternion or a single set of Euler angles we inevitably hit the maximum possible error, ensembles of four quaternions or mixed Euler angles can give fairly accurate conversion from rotation matrices to quaternions or Euler angles. In fact, the quaternion ensemble is only marginally worse than the 6D embedding and noticeably better than the 5D embedding.

\subsection{Estimating the Rotation of a Point Cloud}
\label{sec:exp-2}

We test the accuracy of estimating the rotation of a point cloud with various rotation representation, on point clouds with and without symmetries. There are many different possible symmetries. Here we test one of the possibilities as an example. For each of symmetry/non-symmetry we only use one fixed point cloud, for both training and testing, with the rotation being the only variation in the input. To understand why we use such an unconventional setting, along with all the details of network architecture, loss function, training and construction of the point cloud data, please refer to appendix \ref{sec:exp-more}. For now it suffices to know that our network is invariant under input point permutation, and that our experiment comes in two parts: in part one, the point could does not have rotational symmetry; in part two, the point cloud has the rotational symmetry $D_2$ (in Schoenflies notation) which means it is invariant under a rotation of $180^\circ$ around the $x$ axis, the $y$ axis and the $z$ axis.

The result of part 1 is shown in figure \ref{fig:t2-c1} and table \ref{tab:t2-c1}. A single quaternion, ensemble of 4 quaternions, and the 5D and 6D embeddings are compared. The result is consistent with that of rotation matrix conversion.

The result of part 2 is shown in figure \ref{fig:t2-d2} and table \ref{tab:t2-d2}. It can be clearly seen that when the input possessess nontrivial rotational symmetry, the 5D and 6D embeddings can no longer correctly estimate the rotation of the input in all cases.  In contrast, the ensemble of four quaternions continues to perform well. We added the ensemble of four 5D embeddings and the ensemble of four 6D embeddings to the comparison, and the results are similar to the ensemble of four quaternions. We can see that the difference between ensembles and single networks is qualitative while the difference between different kinds of representations is quantitative and comparatively rather minor.

From the numerical result one might guess that with a single network and input with $D_2$ symmetry, the lower bound of maximum error of rotation estimation is $120^\circ$. We will derive this in appendix \ref{sec:proofs}.

\begin{figure}\CenterFloatBoxes
\begin{floatrow}
\ffigbox[1.1\linewidth]{
  \includegraphics[width=\linewidth]{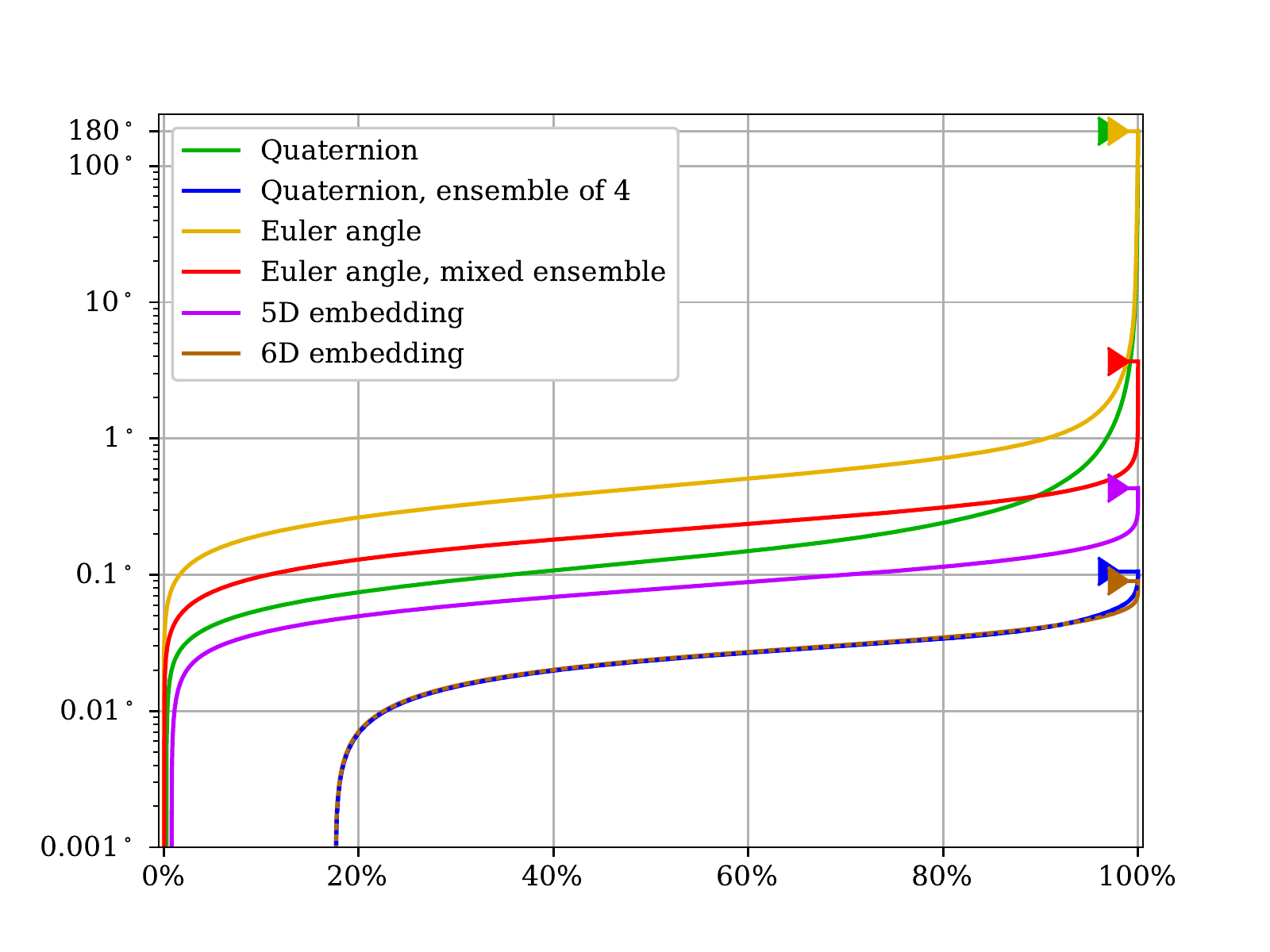}
}
{
\caption{Error of rotation matrix conversion by percentile}
\label{fig:t1-all}
}
\killfloatstyle\ttabbox{
{\footnotesize
\begin{tabular}{lrr}
  \toprule
  \cb{white} Type            & Mean($^\circ$) & Max($^\circ$)  \\
  \midrule
  \cb{clr2} Quat.            & $0.3323$       & $179.9995$ \\
  \cb{clr1} Quat. $\times$4  & $0.0226$       &   $0.1059$ \\
  \cb{clr0} Euler            & $0.7368$       & $179.9981$ \\
  \cb{clr3} Euler mix        & $0.2278$       &   $3.6693$ \\
  \cb{clr4} 5D               & $0.0838$       &   $0.4328$ \\
  \cb{clr5} 6D               & $0.0225$       &   $0.0901$ \\
  \bottomrule
\end{tabular}
}
}
{
\caption{Error statistics}
\label{tab:t1-all}
}
\end{floatrow}
\end{figure}

\begin{figure}\CenterFloatBoxes
\begin{floatrow}
\ffigbox[1.1\linewidth]{
  \includegraphics[width=\linewidth]{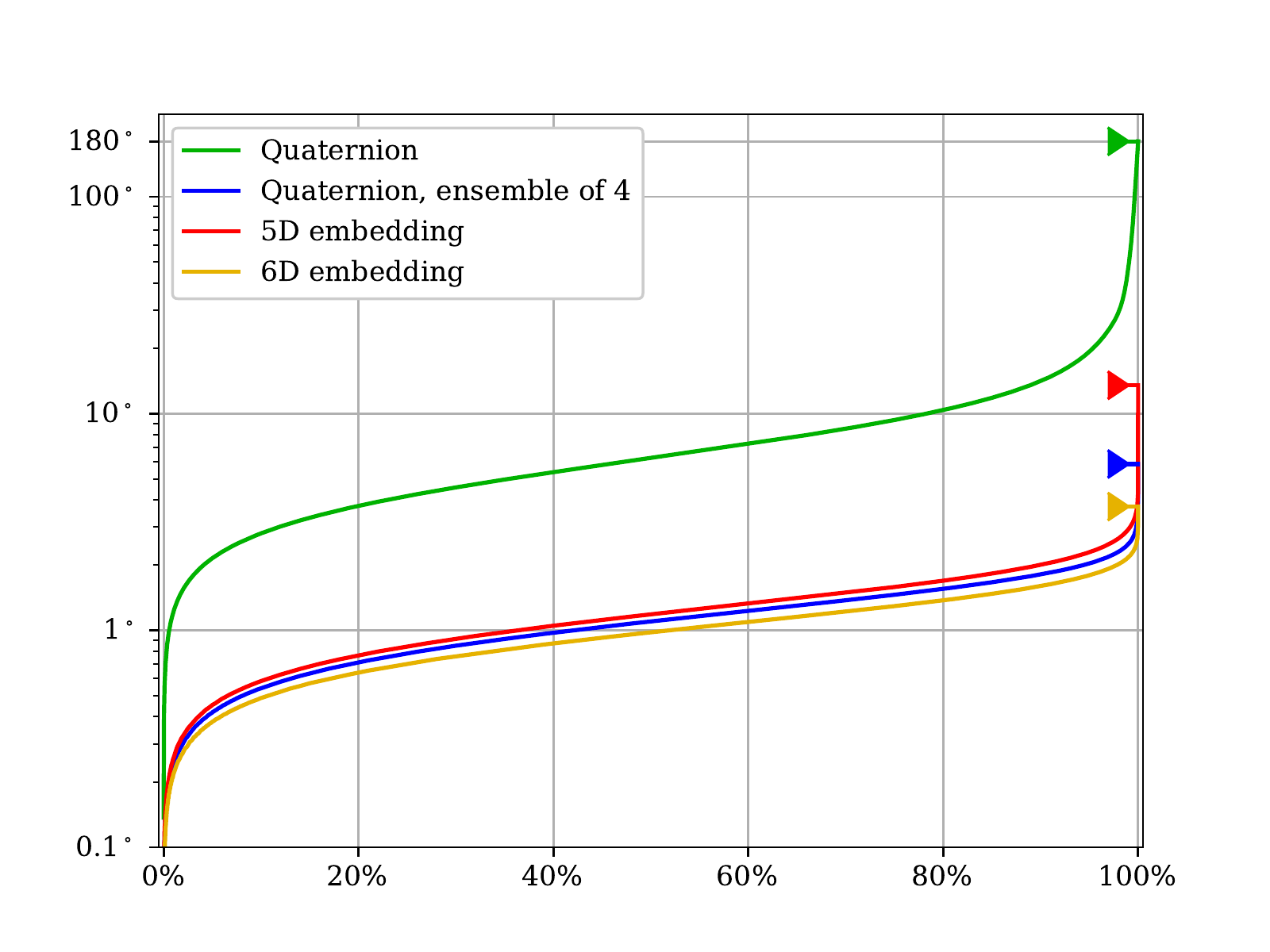}
}
{
\caption{Error of rotation estimation by percentile, of a point cloud with trivial symmetry}
\label{fig:t2-c1}
}
\killfloatstyle\ttabbox{
{\footnotesize
\begin{tabular}{lrr}
  \toprule
  \cb{white} Type            & Mean($^\circ$) & Max($^\circ$)  \\
  \midrule
  \cb{clr2} Quat.            & $8.3963$       & $179.9938$ \\
  \cb{clr1} Quat. $\times$4  & $1.1487$       &   $5.8517$ \\
  \cb{clr3} 5D               & $1.2550$       &  $13.5052$ \\
  \cb{clr0} 6D               & $1.0197$       &   $3.7235$ \\
  \bottomrule
\end{tabular}
}
}
{
\caption{Error statistics}
\label{tab:t2-c1}
}
\end{floatrow}
\end{figure}

\begin{figure}\CenterFloatBoxes
\begin{floatrow}
\ffigbox[1.1\linewidth]{
  \includegraphics[width=\linewidth]{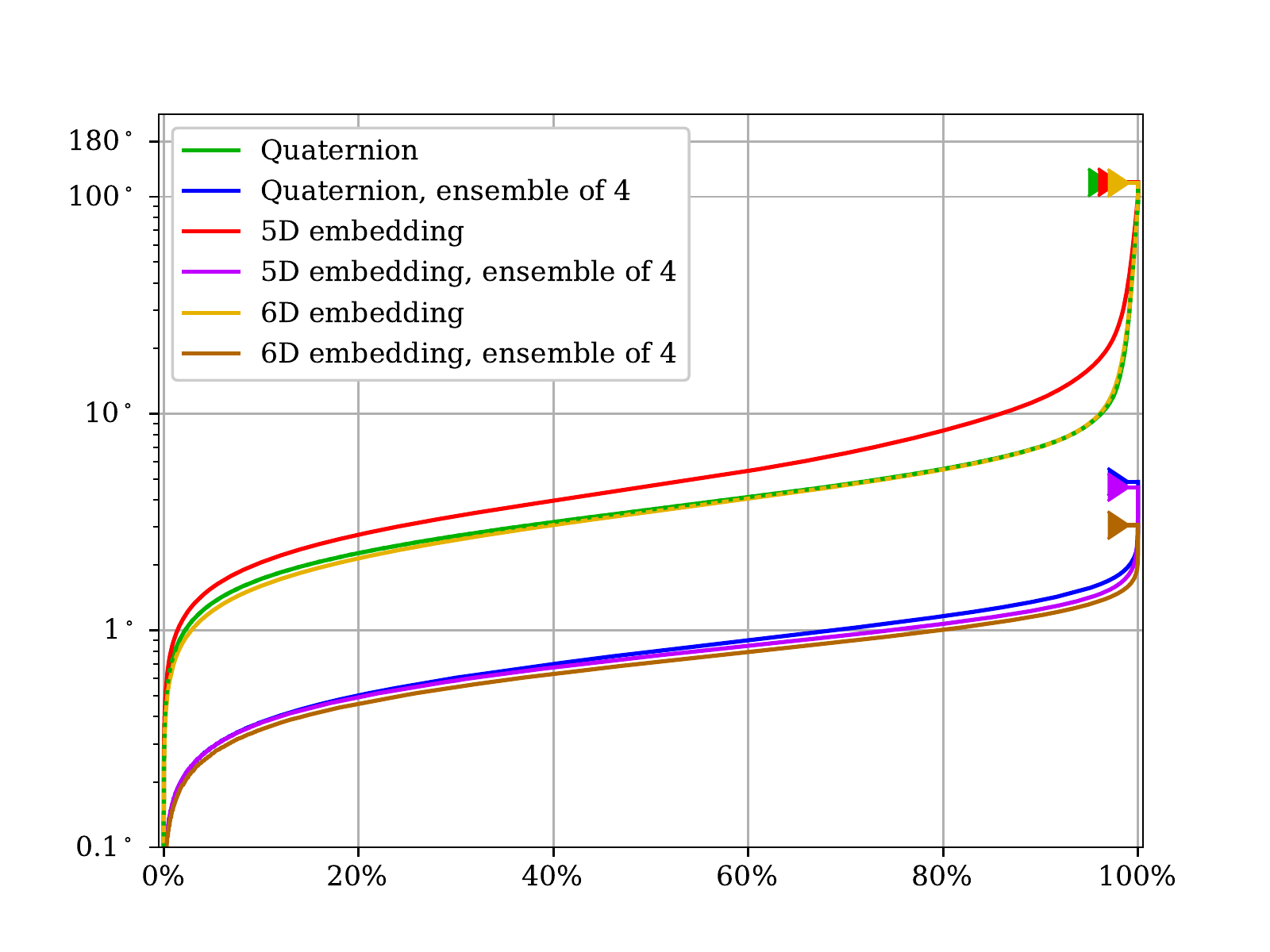}
}
{
\caption{Error of rotation estimation by percentile, of a point cloud with $D_2$ symmetry}
\label{fig:t2-d2}
}
\killfloatstyle\ttabbox{
{\footnotesize
\begin{tabular}{lrr}
  \toprule
  \cb{white} Type            & Mean($^\circ$) & Max($^\circ$)  \\
  \midrule
  \cb{clr2} Quat.            & $4.5983$       & $116.2830$ \\
  \cb{clr1} Quat. $\times$4  & $0.8465$       &   $4.8397$ \\
  \cb{clr3} 5D               & $6.5245$       & $116.5406$ \\
  \cb{clr4} 5D $\times$4     & $0.7958$       &   $4.5727$ \\
  \cb{clr0} 6D               & $4.5320$       & $115.6072$ \\
  \cb{clr5} 6D $\times$4     & $0.7420$       &   $3.0548$ \\
  \bottomrule
\end{tabular}
}
}
{
\caption{Error statistics}
\label{tab:t2-d2}
}
\end{floatrow}
\end{figure}

\section{Conclusion}

In this paper, we analyzed the discontinuity problem of quaternion and Euler angle representation of 3D rotations in neural networks from a topological perspective and showed that a maximum error of $180^\circ$ must occur. We further explored the effect of symmetry in the input on the ability of the network to find the correct rotation representation, and found that symmetries in the input can cause continuous rotation representations to become ineffective, with provable lower bounds of maximum error. We proposed the self-selecting ensemble to solve this discontinuity problem and showed that it works well with different rotation representations, even in the presence of symmetry in the input. We verified our theory with experiments on two simple example problems, the conversion from rotation matrices to other representations and the estimation of the rotation of a point cloud. An extension of our results to 4D rotations were also discussed.

The application of our theoretical analysis and ensemble method to real-world problems involving rotation representation can be a direction for future researches. In addition, the potential usefulness of the self-selecting ensemble for solving a broader range of regression problems with different input and output topology or with other discontinuities in general can be further explored.

\section*{Broader Impact}

This work is mainly concerned with a theoretical problem and does not present any foreseeable societal consequence.


\bibliographystyle{plainnat}
\bibliography{rotation}

\appendix
\newpage

\section{Mathematical Notes}
\label{sec:math}
\subsection{Distance in $\so(3)$}
The ``difference'' between two rotations $R_1$ and $R_2$ can be described by what it takes to change $R_1$ into $R_2$, which can be done by multiplying $R_2R_1^{-1}$ on the left. $R_2R_1^{-1}$ is also a rotation. In 3 dimensions, the rotation angle of $R_2R_1^{-1}$ can be used to measure the distance between $R_1$ and $R_2$.

Rotation matrices are orthogonal matrices. An important property of orthogonal matrices is that all their eigenvalues have length 1. Complex eigenvalues of real matrices always appear in conjugate pairs. For an orthogonal matrix, this means their eigenvalues must be $1$, $-1$ or pairs of complex numbers of the form $\cos\theta\pm i\sin\theta$.

For 3D rotation matrices, their three eigenvalues are exactly $1$ and $\cos\theta\pm i\sin\theta$ where $\theta$ is the rotation angle. The trace of a matrix equals the sum of its eigenvalues. So, for a 3D rotation matrix with rotation angle $\theta$, $\mathrm{tr}(R)=1+(\cos\theta+i\sin\theta)+(\cos\theta-i\sin\theta)=1+2\cos\theta$, so $\theta=\cos^{-1}\frac{\mathrm{tr}(R)-1}{2}$. Let $d(R_1,R_2)$ denote the distance between $R_1$ and $R_2$, then $d(R_1,R_2)=\cos^{-1}\frac{\mathrm{tr}(R_2R_1^{-1})-1}{2}$.

For a unit quaternion $\mathbf{q}=w+\mathbf{i}x+\mathbf{j}y+\mathbf{k}z$, we have
\begin{align}
d(I,\RQ(\mathbf{q}))&=\cos^{-1}\frac{\mathrm{tr}(\RQ(\mathbf{q}))-1}{2} \nonumber \\
&=\cos^{-1}\frac{(1-2y^2-2z^2)+(1-2x^2-2z^2)+(1-2x^2-2y^2)-1}{2} \nonumber \\
&=\cos^{-1}\frac{2-4(x^2+y^2+z^2)}{2} \nonumber \\
&=\cos^{-1}(2w^2-1) \nonumber \\
&=2\cos^{-1}|w|
\end{align}

That is, $d(I,\RQ(\mathbf{q}))=2\cos^{-1}|\mathrm{Re}(\mathbf{q})|$ where $\mathrm{Re}(\mathbf{q})$ is the scalar part of $\mathbf{q}$. We consider distances between rotations represented by quaternions often, so to avoid having to write $\RQ$ every time we let $d$ also take quaternions as arguments directly, in which case by $\mathbf{q}$ we mean $\RQ(\mathbf{q})$. We have
\begin{align}
d(\mathbf{p},\mathbf{q})&=d(\RQ(\mathbf{p}),\RQ(\mathbf{q})) \nonumber \\
&=d(I,\RQ(\mathbf{q})\RQ(\mathbf{p})^{-1}) \nonumber \\
&=d(I,\RQ(\mathbf{q}\overline{\mathbf{p}})) \nonumber \\
&=\cos^{-1}(2\mathrm{Re}(\mathbf{q}\overline{\mathbf{p}})^2-1) \nonumber \\
&=\cos^{-1}(2(\mathbf{p}\cdot\mathbf{q})^2-1) \nonumber \\
&=2\cos^{-1}|\mathbf{p}\cdot\mathbf{q}|
\end{align}

Note however that $d$ defined as such is a metric in $\so(3)$ but not a metric in $S^3$, as it does not satisfy the triangle inequality. Instead, we denote the usual metric on $S^3$, the geodesic distance, by $d_Q$: $d_Q(\mathbf{p},\mathbf{q})=\cos^{-1}(\mathbf{p}\cdot\mathbf{q})$. We have $d(\mathbf{p},\mathbf{q})=\min\{2d_Q(\mathbf{p},\mathbf{q}),2\pi-2d_Q(\mathbf{p},\mathbf{q})\}$, or, $d(\mathbf{p},\mathbf{q})=\min\{2d_Q(\mathbf{p},\mathbf{q}),2d_Q(\mathbf{p},-\mathbf{q})\}$.

\subsection{Distance in $\so(3)/G$}

In theorem \ref{thm:sym} we left open the problem of establishing error bounds. To find such bounds we must first define distance between a rotation and an element of $\so(3)/G$. Let $S\in\so(3)$ and $\mathcal{R}\in\so(3)/G$. Define
\begin{equation}
    d_G(S,\mathcal{R})=\min_{R\in p_G^{-1}[\mathcal{R}]}d(S,R)
\end{equation}
That is, the distance from a rotation $S$ to an element $\mathcal{R}$ of $\so(3)/G$ is its distance to the nearest preimage of $\mathcal{R}$ in $\so(3)$. This can also be extended to have quaternions as the first argument:
\begin{align}
    d_G(\mathbf{s},\mathcal{R})&=d_G(\RQ(\mathbf{s}),\mathcal{R}) \nonumber \\
    &=\min_{R\in p_G^{-1}[\mathcal{R}]}d(\RQ(\mathbf{s}),R) \nonumber \\
    &=\min_{R\in p_G^{-1}[\mathcal{R}]}d(\mathbf{s},R) \nonumber \\
    &=\min_{\mathbf{r}\in \RQ^{-1}[p_G^{-1}[\mathcal{R}]]}d(\mathbf{s},\mathbf{r}) \nonumber \\
    &=\min_{\mathbf{r}\in \RQ^{-1}[p_G^{-1}[\mathcal{R}]]}\min\{2d_Q(\mathbf{s},\mathbf{r}),2d_Q(\mathbf{s},-\mathbf{r})\} \nonumber \\
    &=\min_{\mathbf{r}\in \RQ^{-1}[p_G^{-1}[\mathcal{R}]]}2d_Q(\mathbf{s},\mathbf{r})
\end{align}
On the last line, the inner $\min$ can be absorbed into the outer $\min$ because for any $\mathbf{r}\in\RQ^{-1}[p_G^{-1}[\mathcal{R}]]$ we also have $-\mathbf{r}\in\RQ^{-1}[p_G^{-1}[\mathcal{R}]]$. We then further extend the definition to allow quaternions as the second argument:
\begin{equation}
    d_G(\mathbf{s},\mathbf{r})=d_G(\mathbf{s},p_G(\RQ(\mathbf{r})))=\min_{\mathbf{r}'\in \RQ^{-1}[p_G^{-1}[p_G(\RQ(\mathbf{r}))]]}2d_Q(\mathbf{s},\mathbf{r}')
\end{equation}
Note that $\RQ^{-1}[p_G^{-1}[p_G(\RQ(\mathbf{r}))]]=\{\mathbf{r}\mathbf{q}|\mathbf{q}\in\RQ^{-1}(G)\}$. $\RQ^{-1}(G)$, the preimage of $G$ under the covering map $\RQ:S^3\to\so(3)$, is called a binary polyhedral group. Like specific finite subgroups of $\so(3)$, the specific binary polyhedral groups have their names and notations, but generically let us denote them by $\widehat{G}$. So we have
\begin{equation}
d_G(\mathbf{s},\mathbf{r})=\min_{\mathrm{r}'\in\mathrm{r}\widehat{G}}2d_Q(\mathbf{s},\mathbf{r}')
\end{equation}

\subsection{Distance in $\so(4)$}
To derive analogous results with error bounds for 4D rotations, we need to define the distance between two 4D rotations. In contrast to 3D rotations, there is no single rotation angle and rotation axis in general. Instead, there exists a pair of orthogonal planes that are invariant under the rotation. The rotation matrix has eigenvalues $\cos\theta\pm i\sin\theta$ and $\cos\phi\pm i\sin\phi$, and the restriction of the 4D rotation on each of these two planes is a 2D rotation, one with rotation angle $\theta$ and the other with rotation angle $\phi$.

Note that unlike in 3D where a rotation of angle $-\theta$ is also a rotation of angle $\theta$ around the opposite axis, in 4D if we negate the sign of rotation in one of the pair of invariant planes by flipping the orientation of the other invariant plane, then the sign of rotation on the other invariant plane will also be negated. So in general we cannot necessarily have $\theta$ and $\phi$ both positive without changing the handedness of the coordinate system. Without loss of generality we assume that $|\theta|\ge|\phi|$ and $\theta\in[0,\pi]$.

For two 4D rotations $R_1$ and $R_2$, we compute $\theta$ and $\phi$ from $R_2 R_1^{-1}$ and take $|\theta|+|\phi|$ as the distance between $R_1$ and $R_2$. Let us denote this by $d_4(R_1,R_2)$.

Since we are interested in the quaternion representation of 4D rotations, we want to compute the distance between two rotations from their quaternion representations, without having to compute the eigenvalues of a non-symmetric matrix. We first introduce the quaternion representation of 4D rotations. The formula is due to van Elfrinkhof \cite{van1897eene}. A proof in English can be found in \cite{mebius2005matrix}.

Let $A$ be a $4\times4$ matrix. Define the \emph{associate matrix} $M$ as
\begin{equation}
    \scriptsize
    M=\frac{1}{4}\begin{bmatrix}
        A_{11}\tp A_{22}\tp A_{33}\tp A_{44} & A_{21}\tm A_{12}\tm A_{43}\tp A_{34} & A_{31}\tp A_{42}\tm A_{13}\tm A_{24} & A_{41}\tm A_{32}\tp A_{23}\tm A_{14}\\
        A_{21}\tm A_{12}\tp A_{43}\tm A_{34} & -A_{11}\tm A_{22}\tp A_{33}\tp A_{44} & A_{41}\tm A_{32}\tm A_{23}\tp A_{14} & -A_{31}\tm A_{42}\tm A_{13}\tm A_{24}\\
        A_{31}\tm A_{42}\tm A_{13}\tp A_{24} & -A_{41}\tm A_{32}\tm A_{23}\tm A_{14} & -A_{11}\tp A_{22}\tm A_{33}\tp A_{44} & A_{21}\tp A_{12}\tm A_{43}\tm A_{34}\\
        A_{41}\tp A_{32}\tm A_{23}\tm A_{14} & A_{31}\tm A_{42}\tp A_{13}\tm A_{24} & -A_{21}\tm A_{12}\tm A_{43}\tm A_{34} & -A_{11}\tp A_{22}\tp A_{33}\tm A_{44}\\
    \end{bmatrix}
\end{equation}
$M$ has rank $1$ and Frobenius norm $1$ if and only if $A$ is a rotation matrix. In such case there exist real numbers $a$, $b$, $c$, $d$, $e$, $f$, $g$ and $h$ such that $M=[a\; b\; c\; d]^T[e\; f\; g\; h]$ and $a^2+b^2+c^2+d^2=e^2+f^2+g^2+h^2=1$. The solution is unique up to negating all these numbers. Then, the rotation matrix $A$ can be decomposed as $A=A_L A_R$ where
\begin{equation}
A_L=\begin{bmatrix}
    a & -b & -c & -d \\
    b &  a & -d &  c \\
    c &  d &  a & -b \\
    d & -c &  b &  a \\
\end{bmatrix}\quad
A_R=\begin{bmatrix}
    e & -f & -g & -h \\
    f &  e &  h & -g \\
    g & -h &  e &  f \\
    h &  g & -f &  e \\
\end{bmatrix}
\end{equation}
The two matrices commute, and it can be checked that
\begin{equation}
    A_LA_R[w\; x\mathbf{i}\; y\mathbf{j}\; z\mathbf{k}]^T=(a+b\mathbf{i}+c\mathbf{j}+d\mathbf{k})(w+x\mathbf{i}+y\mathbf{j}+z\mathbf{k})(e+f\mathbf{i}+g\mathbf{j}+h\mathbf{k})
\end{equation}

So, given a pair of two unit quaternions $\mathbf{q}_L=(a+b\mathbf{i}+c\mathbf{j}+d\mathbf{k})$ and $\mathbf{q}_R=(e+f\mathbf{i}+g\mathbf{j}+h\mathbf{k})$, they represent the 4D rotation
\begin{equation}
    \RQQ(\mathbf{q}_L,\mathbf{q}_R)=\begin{bmatrix}
    a & -b & -c & -d \\
    b &  a & -d &  c \\
    c &  d &  a & -b \\
    d & -c &  b &  a \\
\end{bmatrix}
\begin{bmatrix}
    e & -f & -g & -h \\
    f &  e &  h & -g \\
    g & -h &  e &  f \\
    h &  g & -f &  e \\
\end{bmatrix}
\end{equation}
and if $p$ is a point $(w,x,y,z)$ and $\mathbf{p}$ is its quaternion form $w+x\mathbf{i}+y\mathbf{j}+z\mathbf{k}$, then the quaternion form of $\RQQ(\mathbf{q}_L,\mathbf{q}_R)p$ is $\mathbf{q}_L\mathbf{p}\mathbf{q}_R$. $(\mathbf{q}_L,\mathbf{q}_R)$ and $(-\mathbf{q}_L,-\mathbf{q}_R)$ represent the same rotation, and for a given rotation matrix, $(\mathbf{q}_L,\mathbf{q}_R)$ can be uniquely determined up to negation.

We then proceed to find the relationship between quaternion distance $d_Q$ and 4D rotation distance $d_4$. The real Schur decomposition theorem\footnote{See e.g. page 377 of \cite{golub2013matrix}} states that for any real matrix $A\in\mathbb{R}^{n\times n}$, there exists an orthogonal matrix $Q\in\mathbb{R}^{n\times n}$ such that
\begin{equation}
    Q^TAQ=\begin{bmatrix}
        R_{11} & R_{12} & \cdots & R_{1m} \\
        0      & R_{22} & \cdots & R_{2m} \\
        \vdots & \vdots & \ddots & \vdots \\
        0      & 0      & \cdots & R_{mm}
    \end{bmatrix}
\end{equation}
where each $R_{ii}$ is either a $1\times 1$ matrix or a $2\times 2$ matrix having complex conjugate eigenvalues. Apply this to the case where $A$ is a 4D rotation matrix, and treat a pair of eigenvalue $1$ or a pair of eigenvalue $-1$ as a complex conjugate pair, then there exists orthogonal matrix $Q\in\mathbb{R}^{4\times 4}$ such that
\begin{equation}
    Q^TAQ=\begin{bmatrix}
        R_{11} & R_{12} \\
        0      & R_{22}
    \end{bmatrix}
\end{equation}
where $R_{11}$ and $R_{22}$ are $2\times 2$ matrices having complex conjugate eigenvalues. Since $A$ is orthogonal, $Q^TAQ$ is also orthogonal. $\mathrm{det}(Q^TAQ)=\mathrm{det}(A)=1$ so $Q^TAQ$ is a rotation. From $(Q^TAQ)^T(Q^TAQ)=I$ we can get $R_{12}=0$. Assume that $Q$ has determinant $1$ (otherwise it has determinant $-1$, then we can negate the last row of $Q$ so that it has determinant $1$) so that it is also a rotation. $Q^TAQ$ has the same eigenvalues as $A$, so we must have
\begin{equation}
    Q^TAQ=\begin{bmatrix}
        \cos\theta & -\sin\theta & 0 & 0 \\
        \sin\theta & \cos\theta & 0 & 0 \\
        0 & 0 & \cos\phi & -\sin\phi \\
        0 & 0 & \sin\phi & \cos\phi
    \end{bmatrix}
\end{equation}
where $\theta$ and $\phi$ are the two rotation angles of $A$. Let $S=Q^TAQ$. Its associate matrix is
\begin{align}
    M_S&=\frac{1}{4}\begin{bmatrix}
        2\cos\theta+2\cos\phi & 2\sin\theta-2\sin\phi & 0 & 0 \\
        2\sin\theta+2\sin\phi & -2\cos\theta+2\cos\phi & 0 & 0\\
        0 & 0 & 0 & 0\\
        0 & 0 & 0 & 0
    \end{bmatrix}\nonumber\\
    &=\begin{bmatrix}
        \cos\frac{\theta+\phi}{2}\cos\frac{\theta-\phi}{2} & \cos\frac{\theta+\phi}{2}\sin\frac{\theta-\phi}{2} & 0 & 0 \\
        \sin\frac{\theta+\phi}{2}\cos\frac{\theta-\phi}{2} & \sin\frac{\theta+\phi}{2}\sin\frac{\theta-\phi}{2} & 0 & 0 \\
        0 & 0 & 0 & 0 \\
        0 & 0 & 0 & 0
    \end{bmatrix}
\end{align}
From this we can then find the quaternion representation of $S$: 
\begin{equation}
    S=\RQQ(\pm(\mathbf{s}_L,\mathbf{s}_R)),\quad\mathbf{s}_L=\cos\frac{\theta+\phi}{2}+\mathbf{i}\sin\frac{\theta+\phi}{2},\quad\mathbf{s}_R=\cos\frac{\theta-\phi}{2}+\mathbf{i}\sin\frac{\theta-\phi}{2}
\end{equation}
Since $Q$ is also a rotation matrix, it can be represented as a pair of quaternions as well. Assume that $Q=\RQQ(\mathbf{q}_L,\mathbf{q}_R)$.

Now we can find the quaternion representation of $A$. Given a point $p$, the quaternion form of $Ap=QSQ^Tp$ is $\mathbf{q}_L\mathbf{s}_L\overline{\mathbf{q}_L}\mathbf{p}\overline{\mathbf{q}_R}\mathbf{s}_R\mathbf{q}_R$, which tells us that $A=\RQQ(\mathbf{q}_L\mathbf{s}_L\overline{\mathbf{q}_L},\overline{\mathbf{q}_R}\mathbf{s}_R\mathbf{q}_R)$. If one quaternion representation of $A$ is $(\mathbf{u}_L,\mathbf{u}_R)$, then $(\mathbf{u}_L,\mathbf{u}_R)=\pm(\mathbf{q}_L\mathbf{s}_L\overline{\mathbf{q}_L},\overline{\mathbf{q}_R}\mathbf{s}_R\mathbf{q}_R)$.

For any unit quaternions $\mathbf{p}$ and $\mathbf{q}$, $\mathrm{Re}(\mathbf{p}\mathbf{q}\overline{\mathbf{p}})=(\mathbf{p}\mathbf{q})\cdot\mathbf{p}=\mathbf{p}\cdot(\mathbf{p}\mathbf{q})=\mathrm{Re}(\overline{\mathbf{p}}\mathbf{p}\mathbf{q})=\mathrm{Re}(\mathbf{q})$, so either
\begin{gather}
\mathrm{Re}(\mathbf{u}_L)=\mathrm{Re}(\mathbf{q}_L\mathbf{s}_L\overline{\mathbf{q}_L})=\mathrm{Re}(\mathbf{s}_L)=\cos\frac{\theta+\phi}{2}\nonumber\\
\mathrm{Re}(\mathbf{u}_R)=\mathrm{Re}(\overline{\mathbf{q}_R}\mathbf{s}_R\mathbf{q}_R)=\mathrm{Re}(\mathbf{s}_R)=\cos\frac{\theta-\phi}{2}
\end{gather}
or
\begin{gather}
\mathrm{Re}(\mathbf{u}_L)=\mathrm{Re}(-\mathbf{q}_L\mathbf{s}_L\overline{\mathbf{q}_L})=-\mathrm{Re}(\mathbf{s}_L)=-\cos\frac{\theta+\phi}{2}\nonumber\\
\mathrm{Re}(\mathbf{u}_R)=\mathrm{Re}(-\overline{\mathbf{q}_R}\mathbf{s}_R\mathbf{q}_R)=-\mathrm{Re}(\mathbf{s}_R)=-\cos\frac{\theta-\phi}{2}
\end{gather}
In the first case, $d_Q(\mathbf{1},\mathbf{u}_L)=\frac{\theta+\phi}{2}$ and $d_Q(\mathbf{1},\mathbf{u}_R)=\frac{\theta-\phi}{2}$. In the second case, $d_Q(\mathbf{1},\mathbf{u}_L)=\pi-\frac{\theta+\phi}{2}$ and $d_Q(\mathbf{1},\mathbf{u}_R)=\pi-\frac{\theta-\phi}{2}$. We assumed that $\theta\in[0,\pi]$, so we can combine both cases:
\begin{align}
    |\theta|&=\min\{d_Q(\mathbf{1},\mathbf{u}_L)+d_Q(\mathbf{1},\mathbf{u}_R),2\pi-d_Q(\mathbf{1},\mathbf{u}_L)-d_Q(\mathbf{1},\mathbf{u}_R)\}\nonumber\\
    |\phi|&=|d_Q(\mathbf{1},\mathbf{u}_L)-d_Q(\mathbf{1},\mathbf{u}_R)|
\end{align}

So, in terms of quaternions, $d_4$ is defined as
\begin{align}
    &d_4((\mathbf{p}_L,\mathbf{p}_R),(\mathbf{q}_L,\mathbf{q}_R))\nonumber\\
    =&\min\{d_Q(\mathbf{p}_L,\mathbf{q}_L)+d_Q(\mathbf{p}_R,\mathbf{q}_R),2\pi-d_Q(\mathbf{p}_L,\mathbf{q}_L)-d_Q(\mathbf{p}_R,\mathbf{q}_R)\}\nonumber\\
    &+|d_Q(\mathbf{p}_L,\mathbf{q}_L)-d_Q(\mathbf{p}_R,\mathbf{q}_R)|
\end{align}

\subsection{Sampling Random Rotations}
\label{sec:rand-rot}
We would like to ensure that random 3D rotations are sampled properly for training and testing. \cite{randomrotation} gives a discussion of what it means for a distribution of 3D rotations to be ``uniform''. We note here that uniformly sampling $\alpha\in(-\pi,\pi]$, $\beta\in[-\frac{\pi}{2},\frac{\pi}{2}]$ and $\gamma\in(-\pi,\pi]$ does not result in $\mathbf{R}_{xyz}(\alpha,\beta,\gamma)$ being uniformly distributed in $\so(3)$.

Uniformly sampling a rotation axis from $S^2$ and an angle from $[0,\pi]$ does not give a uniformly distributed random rotation in $\so(3)$ either. This seemingly correct method actually heavily favors rotations with small angles.

There is in fact a easy and correct way to sample a uniform random rotation in $\so(3)$. The covering map $\RQ:S^3\to\so(3)$ is a local isometry, so if we uniformly sample a unit quaternion $\mathbf{q}$ from $S^3$, then $\RQ(\mathbf{q})$ is a uniform random rotation in $\so(3)$. To uniformly sample a unit quaternion, sample a 4D vector from a standard normal distribution and normalize it to have unit length.

\section{Additional Theoretical Results}
\label{sec:proofs}
In this section we give proofs omitted in the main text, as well as some additional remarks.
\begin{customthm}{\ref{thm:4funcs}}
\ThmFourFuncText
\end{customthm}
\begin{proof}
We give an example of such a set of functions. For clarity, we define them as functions of quaternions. If we ensure that $f_i(\mathbf{q})=f_i(\mathbf{-q})$ for all $\mathbf{q}\in S^3$ and $i\in\{1, 2, 3, 4\}$, they will be well-defined functions of rotations. $N$ is the normalization function $N(\mathbf{q})=\frac{\mathbf{q}}{||\mathbf{q}||}$ defined for all $\mathbf{q}\neq\mathbf{0}$. Let
\begin{gather}
    \allowdisplaybreaks
    f_1(a, b, c, d)=
    \begin{cases}
         (   a,   b,   c,   d) & (a \ge \frac{1}{2}) \\
        N( 1-a, 2ab, 2ac, 2ad) & (0 \le a < \frac{1}{2})\\
        N( 1+a, 2ab, 2ac, 2ad) & (-\frac{1}{2} \le a < 0)\\
         (  -a,  -b,  -c,  -d) & (a < -\frac{1}{2}) \\
    \end{cases}\\\nonumber
    f_2(a, b, c, d)=
    \begin{cases}
         (   a,   b,   c,   d) & (b \ge \frac{1}{2}) \\
        N( 2ab, 1-b, 2bc, 2bd) & (0 \le b < \frac{1}{2})\\
        N( 2ab, 1+b, 2bc, 2bd) & (-\frac{1}{2} \le b < 0)\\
         (  -a,  -b,  -c,  -d) & (b < -\frac{1}{2}) \\
    \end{cases}\\\nonumber
    f_3(a, b, c, d)=
    \begin{cases}
         (   a,   b,   c,   d) & (c \ge \frac{1}{2}) \\
        N( 2ac, 2bc, 1-c, 2cd) & (0 \le c < \frac{1}{2})\\
        N( 2ac, 2bc, 1+c, 2cd) & (-\frac{1}{2} \le c < 0)\\
         (  -a,  -b,  -c,  -d) & (c < -\frac{1}{2}) \\
    \end{cases}\\\nonumber
    f_4(a, b, c, d)=
    \begin{cases}
         (   a,   b,   c,   d) & (d \ge \frac{1}{2}) \\
        N( 2ad, 2bd, 2cd, 1-d) & (0 \le d < \frac{1}{2})\\
        N( 2ad, 2bd, 2cd, 1+d) & (-\frac{1}{2} \le d < 0)\\
         (  -a,  -b,  -c,  -d) & (d < -\frac{1}{2}) \\
    \end{cases}
\end{gather}

Check that these functions are indeed continuous at case boundaries and that $f_i(-\mathbf{q})=f_i(\mathbf{q})$. For a unit quaternion $(a, b, c, d)$, $a^2+b^2+c^2+d^2=1$, so $\max\{a^2,b^2,c^2,d^2\}\ge\frac{1}{4}$. If $a^2$, $b^2$, $c^2$ or $d^2$ is at least $\frac{1}{4}$, then $f_1$, $f_2$, $f_3$ or $f_4$ respectively will give the correct output.
\end{proof}

\begin{customthm}{\ref{thm:gimbal}}
\ThmGimbalText
\end{customthm}
\begin{proof}
Assume that such a function $f$ exists. It can be checked that $\mathbf{Q}_{xyz}(\theta,\phi,\theta)\cdot\mathbf{Q}_{xyz}(0,\frac{\pi}{2},0)=\cos(\frac{\pi}{4}-\frac{\phi}{2})$, so for $0\le\phi\le\pi$, $d(\mathbf{R}_{xyz}(\theta,\phi,\theta),\mathbf{R}_{xyz}(0,\frac{\pi}{2},0))=\frac{\pi}{2}-\phi$. So there exists $\epsilon>0$ such that $\mathbf{R}_{xyz}(\theta,\phi,\theta)\in U$ for all $\theta\in\mathbb{R}$ and $\frac{\pi}{2}-\epsilon<\phi<\frac{\pi}{2}$.

If $\mathbf{R}_{xyz}(\alpha,\beta,\gamma)=\mathbf{R}_{xyz}(\alpha',\beta',\gamma')$, but $\beta\neq k\pi+\frac{\pi}{2}$ for any $k\in\mathbb{Z}$, we must have $(\alpha',\beta',\gamma')=(\alpha+2m\pi,\beta+2n\pi,\gamma+2k\pi)$ or $(\alpha',\beta',\gamma')=(\alpha+(2m+1)\pi,-\beta+(2n+1)\pi,\gamma+(2k+1)\pi)$ for some $m,n,k\in\mathbb{Z}$.
In any case, let 
\begin{equation}
g(\theta,\phi)=\frac{f^{(1)}(\mathbf{R}_{xyz}(\theta,\phi,\theta))-\theta}{\pi}
\end{equation}
where $f^{(1)}$ is the first coordinate of $f$. Then for $\theta\in\mathbb{R}$ and $\frac{\pi}{2}-\epsilon<\phi<\frac{\pi}{2}$, $g$ is continuous and integer-valued, so it must be constant. But since $\mathbf{R}_{xyz}(0,\phi,0)=\mathbf{R}_{xyz}(2\pi,\phi,2\pi)$, we have $g(0,\phi)-g(2\pi,\phi)=2$, which is a contradiction.
\end{proof}

\begin{customthm}{\ref{thm:4funcs-e}}
\ThmFourFuncEulerText
\end{customthm}
\begin{proof}
Similar to theorem \ref{thm:2d-2funcs} we distinguish between two versions of $\atan$ with different choices of principal values. Let $\atanl$ be the version with principal value in $(-\pi,\pi]$ and $\atanr$ be the version with principal value in $[0, 2\pi)$.

Let $\mathcal{T}$ be a parametrized function, defined as follows:
\begin{equation}
    \mathcal{T}(a_1,a_2,a_3,a_4)(x)=\begin{cases}
        0 & (x \le a_1) \\\vspace{2pt}
        \frac{x-a_1}{a_2-a_1} & (a_1 < x \le a_2) \\\vspace{2pt}
        1 & (a_2 < x \le a_3) \\\vspace{2pt}
        \frac{a_4-x}{a_4-a_3} & (a_3 < x \le a_4) \\\vspace{2pt}
        0 & (a_4 < x) \\
    \end{cases}
\end{equation}
That is, $\mathcal{T}(a_1,a_2,a_3,a_4)$ is a piecewise linear function defined by connecting the points $(a_1, 0)$, $(a_2, 1)$, $(a_3, 1)$, $(a_4, 0)$ in $[a_1, a_4]$ and constantly $0$ beyond that range. Define these instances of $\mathcal{T}$:
\begin{align}
\allowdisplaybreaks
\mathcal{T}_{\alpha\triangleleft}&=\mathcal{T}(-\frac{2\pi}{3},-\frac{\pi}{2},\frac{\pi}{2},\frac{2\pi}{3})\\
\mathcal{T}_{\alpha\triangleright}&=\mathcal{T}(\frac{\pi}{3},\frac{\pi}{2},\frac{3\pi}{2},\frac{5\pi}{3})\nonumber\\
\mathcal{T}_{\beta}&=\mathcal{T}(-\frac{\pi}{3},-\frac{\pi}{4},\frac{\pi}{4},\frac{\pi}{3})\nonumber\\
\mathcal{T}_{\gamma\triangleleft}&=\mathcal{T}(-\frac{5\pi}{6},-\frac{3\pi}{4},\frac{3\pi}{4},\frac{5\pi}{6})\nonumber\\
\mathcal{T}_{\gamma\triangleright}&=\mathcal{T}(\frac{\pi}{6},\frac{\pi}{4},\frac{7\pi}{4},\frac{11\pi}{6})\nonumber
\end{align}

Let $M\in\so(3)$ be a rotation matrix. Define these functions:
\begin{align}
\alpha_1(M)&=\begin{cases}
    \atanl(M_{32},M_{33}) & (M_{31} \neq \pm1)\\
    \pi & (M_{31} = \pm1)\\
\end{cases}\\
\alpha_2(M)&=\begin{cases}
    \atanr(M_{32},M_{33}) & (M_{31} \neq \pm1)\\
    0 & (M_{31} = \pm1)\\
\end{cases}\nonumber\\
\beta_1(M)&=-\sin^{-1}M_{31}\nonumber\\
\gamma_1(M)&=\begin{cases}
    \atanl(M_{21},M_{11}) & (M_{31} \neq \pm1)\\
    \pi & (M_{31} = \pm1)\\
\end{cases}\nonumber\\
t_1(M)&=\mathcal{T}_{\alpha\triangleleft}(\alpha_1(M))\cdot\mathcal{T}_{\beta}(\beta_1(M))\cdot\mathcal{T}_{\gamma\triangleleft}(\gamma_1(M))\nonumber\\
t_2(M)&=\mathcal{T}_{\alpha\triangleright}(\alpha_2(M))\cdot\mathcal{T}_{\beta}(\beta_1(M))\cdot\mathcal{T}_{\gamma\triangleleft}(\gamma_1(M))\nonumber\\
f_1(M)&=(\alpha_1(M),\beta_1(M),\gamma_1(M))\cdot t_1(M)\nonumber\\
f_2(M)&=(\alpha_2(M),\beta_1(M),\gamma_1(M))\cdot t_2(M)\nonumber\displaybreak[0]\\
\alpha_3(M)&=\begin{cases}
    \atanl(-M_{23},M_{22}) & (M_{21} \neq \pm1)\\
    \pi & (M_{21} = \pm1)\\
\end{cases}\nonumber\\
\alpha_4(M)&=\begin{cases}
    \atanr(-M_{23},M_{22}) & (M_{21} \neq \pm1)\\
    0 & (M_{21} = \pm1)\\
\end{cases}\nonumber\\
\beta_3(M)&=\sin^{-1}M_{21}\nonumber\\
\gamma_3(M)&=\begin{cases}
    \atanr(M_{31},M_{11}) & (M_{21} \neq \pm1)\\
    0 & (M_{21} = \pm1)\\
\end{cases}\nonumber\\
t_3(M)&=\mathcal{T}_{\alpha\triangleleft}(\alpha_3(M))\cdot\mathcal{T}_{\beta}(\beta_3(M))\cdot\mathcal{T}_{\gamma\triangleright}(\gamma_3(M))\nonumber\\
t_4(M)&=\mathcal{T}_{\alpha\triangleright}(\alpha_4(M))\cdot\mathcal{T}_{\beta}(\beta_3(M))\cdot\mathcal{T}_{\gamma\triangleright}(\gamma_3(M))\nonumber\\
f_3(M)&=(\alpha_3(M),\beta_3(M),\gamma_3(M))\cdot t_3(M)\nonumber\\
f_4(M)&=(\alpha_4(M),\beta_3(M),\gamma_3(M))\cdot t_4(M)\nonumber
\end{align}
We prove that $f_1$, $f_2$, $f_3$ and $f_4$ defined as such meet the requirements in the theorem. Consider the continuity of $f_1$. We divide $\so(3)$ into overlapping regions according to the range of $\alpha$, $\beta$ and $\gamma$ in their Euler angle representation. Let
\begin{equation}
M=\mathbf{R}_{xyz}(\alpha,\beta,\gamma)=\begin{bmatrix}
    c_\beta c_\gamma & -c_\alpha s_\gamma + s_\alpha s_\beta c_\gamma & s_\alpha s_\gamma + c_\alpha s_\beta c_\gamma \\
    c_\beta s_\gamma & c_\alpha c_\gamma + s_\alpha s_\beta s_\gamma  & -s_\alpha c_\gamma + c_\alpha s_\beta s_\gamma \\
    -s_\beta         & s_\alpha c_\beta                               & c_\alpha c_\beta
\end{bmatrix}
\end{equation}
Where $s_\theta=\sin\theta$ and $c_\theta=\cos\theta$.

\emph{Case 1: $\alpha\in(-\pi,\pi)$, $\beta\in(-\frac{\pi}{2},\frac{\pi}{2})$, $\gamma\in(-\pi,\pi)$}.
$M_{31}\neq\pm 1$ and the discontinuities of $\atanl(M_{32},M_{33})$ and $\atanl(M_{21},M_{11})$ are avoided, so $\alpha_1$, $\beta_1$ and $\gamma_1$ are continuous. $\mathcal{T}$ is always continuous, so $t_1$ and thus $f_1$ are continuous.

\emph{Case 2: $\alpha\in(-\frac{4\pi}{3},-\frac{2\pi}{3})\cup(\frac{2\pi}{3},\frac{4\pi}{3})$}. $\mathcal{T}_{\alpha\triangleleft}(\alpha_1(M))=0$, so $f_1(M)=(0, 0, 0)$ is constant and continuous.

\emph{Case 3: $\beta\in(-\frac{2\pi}{3},-\frac{\pi}{3})\cup(\frac{\pi}{3},\frac{2\pi}{3})$}. $\mathcal{T}_{\beta}(\beta_1(M))=0$, so $f_1(M)=(0, 0, 0)$ is constant and continuous.

\emph{Case 4: $\gamma\in(-\frac{7\pi}{6},-\frac{5\pi}{6})\cup(\frac{5\pi}{6},\frac{7\pi}{6})$}. $\mathcal{T}_{\gamma\triangleleft}(\gamma_1(M))=0$, so $f_1(M)=(0, 0, 0)$ is constant and continuous.

These four cases collectively cover the entire $\so(3)$, so $f_1$ is continuous in $\so(3)$. Additionally, when \rule{0pt}{10pt}  $\alpha\in[-\frac{\pi}{2},\frac{\pi}{2}]$, $\beta\in[-\frac{\pi}{4},\frac{\pi}{4}]$ and $\gamma\in[-\frac{3\pi}{4},\frac{3\pi}{4}]$, $\alpha_1(M)=\alpha$, $\beta_1(M)=\beta$, $\gamma_1(M)=\gamma$ and $\mathcal{T}_{\alpha\triangleleft}(\alpha_1(M))=\mathcal{T}_{\beta}(\beta_1(M))=\mathcal{T}_{\gamma\triangleleft}(\gamma_1(M))=1$, \rule{0pt}{10pt} so $f_1(M)=(\alpha,\beta,\gamma)$.

Similarly, we can prove that $f_2$, $f_3$ and $f_4$ are continuous, and each have a specific region in $\so(3)$ where they give the correct Euler angles. Several things in the proof above need to be changed for each function accordingly:

For $f_2$ and $f_4$, the range of $\alpha$ in case 1 is $(0,2\pi)$, the range of $\alpha$ in case 4 is $(-\frac{\pi}{3},\frac{\pi}{3})\cup(\frac{5\pi}{3},\frac{7\pi}{3})$ and the range of $\alpha$ in the correct range is $[\frac{\pi}{2},\frac{3\pi}{2}]$.

For $f_3$ and $f_4$, divide $\so(3)$ by angle range of $\mathbf{R}_{xzy}$ instead of $\mathbf{R}_{xyz}$:
\begin{equation}
M=\mathbf{R}_{xzy}(\alpha,\beta,\gamma)=\begin{bmatrix}
    c_\beta c_\gamma & -c_\alpha s_\beta c_\gamma - s_\alpha s_\gamma & s_\alpha s_\beta c_\gamma - c_\alpha s_\gamma \\
    s_\beta          & c_\alpha c_\beta                               & -s_\alpha c_\beta \\
    c_\beta s_\gamma & -c_\alpha s_\beta s_\gamma + s_\alpha c_\gamma & s_\alpha s_\beta s_\gamma + c_\alpha c_\gamma
\end{bmatrix}
\end{equation}
the range of $\gamma$ in case 1 is $(0,2\pi)$, the range of $\gamma$ in case 4 is $(-\frac{\pi}{6},\frac{\pi}{6})\cup(\frac{11\pi}{6},\frac{13\pi}{6})$, and the range of $\gamma$ in the correct range is $[\frac{\pi}{4},\frac{7\pi}{4}]$.

Now we need to prove that any rotation in $\so(3)$ falls within the correct range of at least one of $f_1$, $f_2$, $f_3$ and $f_4$. For $M\in\so(3)$, let $m=\max\{M_{11},-M_{11},M_{21},-M_{21},M_{31},-M_{31}\}$.

If $m=M_{11}$, then $2M_{31}^2 \le M_{11}^2+M_{31}^2 \le M_{11}^2+M_{21}^2+M_{31}^2=1$, so $-\frac{\sqrt{2}}{2}\le M_{31}\le\frac{\sqrt{2}}{2}$, so $-\frac{\pi}{4}\le\beta_1(M)\le\frac{\pi}{4}$. $-M_{11}\le M_{21}\le M_{11}$, so $-\frac{\pi}{4}\le\gamma_1(M)\le\frac{\pi}{4}$.

If additionally $M_{33}\ge 0$, then $-\frac{\pi}{2}\le\alpha_1(M)\le\frac{\pi}{2}$ so $M$ is in the correct range of $f_1$. Otherwise $\frac{\pi}{2}\le\alpha_2(M)\le\frac{3\pi}{2}$ so $M$ is in the correct range of $f_2$.

Other cases can be analyzed similarly. We summarize the result in table \ref{tab:thm-4func-e}. If $M$ satisfies the conditions for multiple functions, then it falls in the correct range of each of them.

In conclusion, $f_1$, $f_2$, $f_3$ and $f_4$ are continuous functions from $\so(3)$ to $\mathbb{R}^3$ and for any rotation $R\in \so(3)$, at least one of $\mathbf{R}_{xyz}(f_1(R))$, $\mathbf{R}_{xyz}(f_2(R))$, $\mathbf{R}_{xzy}(f_3(R))$ and $\mathbf{R}_{xzy}(f_4(R))$ is equal to $R$.
\end{proof}

\renewcommand{\arraystretch}{1.5}
\begin{table}
\begin{floatrow}
\ttabbox{
    \footnotesize
    \centering
    \begin{tabular}{|c|c|c|c|}\hline
    \multicolumn{4}{|c|}{Let $m=\max\{M_{11},-M_{11},M_{21},-M_{21},M_{31},-M_{31}\}$} \\\hline
    \multicolumn{2}{|c|}{$m=-M_{21},M_{11},M_{21}$} & \multicolumn{2}{c|}{$m=-M_{31},-M_{11},M_{31}$} \\\hline
    \multicolumn{2}{|c|}{$\beta_1(M)\in[-\frac{\pi}{4},\frac{\pi}{4}]$} & \multicolumn{2}{c|}{$\beta_3(M)\in[-\frac{\pi}{4},\frac{\pi}{4}]$} \\
    \multicolumn{2}{|c|}{$\gamma_1(M)\in[-\frac{3\pi}{4},-\frac{\pi}{4}],[-\frac{\pi}{4},\frac{\pi}{4}],[\frac{\pi}{4},\frac{3\pi}{4}]$ resp.} & \multicolumn{2}{c|}{$\gamma_3(M)\in[\frac{\pi}{4},\frac{3\pi}{4}],[\frac{3\pi}{4},\frac{5\pi}{4}],[\frac{5\pi}{4},\frac{7\pi}{4}]$ resp.} \\\hline
    $M_{33}\ge 0$ & $M_{33}\le 0$ & $M_{22}\ge 0$ & $M_{22}\le 0$ \\\hline
    $\alpha_1(M)\in[-\frac{\pi}{2},\frac{\pi}{2}]$ & $\alpha_2(M)\in[\frac{\pi}{2},\frac{3\pi}{2}]$ & $\alpha_3(M)\in[-\frac{\pi}{2},\frac{\pi}{2}]$ & $\alpha_4(M)\in[\frac{\pi}{2},\frac{3\pi}{2}]$ \\
    $\mathbf{R}_{xyz}(f_1(M))=M$ & $\mathbf{R}_{xyz}(f_2(M))=M$ & $\mathbf{R}_{xzy}(f_3(M))=M$ & $\mathbf{R}_{xzy}(f_4(M))=M$ \\\hline
    \end{tabular}
}
{
\caption{Deciding which function would be correct for $M$}
\label{tab:thm-4func-e}
}
\end{floatrow}
\end{table}
\renewcommand{\arraystretch}{1.0}

\begin{customthm}{\ref{thm:sym}}
\ThmSymText
\end{customthm}
\textit{Remark.} In addition to showing that these functions do not exist, we also want to know if for a continuous function there is a provable maximum error that must be achieved on some input. Such error bounds will be specific to each possible choice of $G$, so we first list these possibilities. There are two infinite series of finite subgroups of $\so(3)$, plus three isolated ones. We list them, with their Schoenflies notations:
\begin{itemize}
    \item $C_n$, the cyclic groups, the rotational symmetry group of a right pyramid with a regular $n$-sided base.
    \item $D_n$, the dihedral groups, the rotational symmetry group of a right prism with a regular $n$-sided base.
    \item $T$, the chiral tetrahedral group, the rotational symmetry group of a regular tetrahedron.
    \item $O$, the chiral octahedral group, the rotational symmetry group of a cube or a regular octahedron.
    \item $I$, the chiral icosahedral group, the rotational symmetry group of a regular dodecahedron or icosahedron.
\end{itemize}
These are all different except for $C_2=D_1$. Our derivation is valid for the cases where the group contains an element of order $2$ (a $180^\circ$ rotation), that is, all cases except for $C_n$ where $n$ is odd.

Assume that $G$ is one of the applicable groups. Let $f$ be any continuous function from $\so(3)/G$ to $\so(3)$. Now pick one of the two preimages of $f(p_G(I_{3\times 3}))$ under $\RQ$ as $\mathbf{s}$. Define $\widehat{f}:S^3\to S^3$ by the following method:

For any $\mathbf{r}\in S^3$, let $h$ be a path from $\mathbf{1}$ to $\mathbf{r}$, that is, $h$ is a continuous function from $[0,1]$ to $S^3$ such that $h(0)=\mathbf{1}$ and $h(1)=\mathbf{r}$. Then $f \circ p_G \circ \RQ \circ h$ is a path in $\so(3)$ starting from $f(p_G(I_{3\times 3}))=\RQ(\mathbf{s})$. By the lifting property of covering spaces, $f \circ p_G \circ \RQ \circ h$ lifts to a unique path in $S^3$ starting from $\mathbf{s}$. Let $g$ be that path. We prove that the value of $g(1)$ is independent of the choice of $h$:

for any two paths $h_1$ and $h_2$ from $\mathbf{1}$ to $\mathbf{r}$, suppose that $f \circ p_G \circ \RQ \circ h_1$ lifts to $g_1$ from $\mathbf{s}$ and $f \circ p_G \circ \RQ \circ h_2$ lifts to $g_2$ from $\mathbf{s}$. Since $S^3$ is simply connected, there exists a homotopy of paths from $h_1$ to $h_2$. That is, there exists a continuous function $H:[0,1]\times[0,1]\to S^3$ such that $H(t,0)=h_1(t)$ and $H(t,1)=h_2(t)$ for all $0\le t\le 1$ and $H(0,u)=\mathbf{1}$ and $H(1,u)=\mathbf{r}$ for all $0\le u\le 1$. Now let $N=f \circ p_G \circ \RQ \circ H$, then $N:[0,1]\times[0,1]\to \so(3)$ is a homotopy of paths in $\so(3)$,
$N(\cdot,0)=f \circ p_G \circ \RQ \circ h_1$ and $N(\cdot,1)=f \circ p_G \circ \RQ \circ h_2$.

$\RQ\circ g_1=N(\cdot,0)$. By the lifting property of covering spaces, $N$ lifts to a unique homotopy $\widehat{N}$ in $S^3$ such that $\widehat{N}(\cdot,0)=g_1$. Since $\RQ(\widehat{N}(0,u))=\RQ(\mathbf{s})$, for any $u$ we must have $\widehat{N}(0,u)=\mathbf{s}$ or $\widehat{N}(0,u)=-\mathbf{s}$. $\widehat{N}(0,u)$ is also continuous in $u$, so it must be constant. So we must have $\widehat{N}(0,1)=\widehat{N}(0,0)=\mathbf{s}$. Likewise we must have $\widehat{N}(1,1)=\widehat{N}(1,0)$.

$\widehat{N}(\cdot,1)$ is a lift of $N(\cdot,1)$ starting from $\widehat{N}(0,1)=\mathbf{s}$. Since $g_2$ is the unique such lift, we have $\widehat{N}(\cdot,1)=g_2$. So, $g_2(1)=\widehat{N}(1,1)=\widehat{N}(1,0)=g_1(1)$.

Now we have proven that the value of $g(1)$ is independent of the choice of $h$. So we can define $\widehat{f}(\mathbf{r})=g(1)$. $\widehat{f}(\mathbf{r})$ is a preimage of $f(p_G(\RQ(\mathbf{r})))$. By the construction of $\widehat{f}$, we can see that if $h$ is a path from $\mathbf{p}$ to $\mathbf{q}$, then the lift of $f\circ p_G\circ \RQ\circ h$ starting from $\widehat{f}(\mathbf{p})$ will end at $\widehat{f}(\mathbf{q})$.

Let $\widehat{G}=\RQ^{-1}[G]$ be the binary polyhedral group corresponding to $G$. Since $G$ contains a $180^\circ$ rotation, $\widehat{G}$ must contain a pure vector. Let $\mathbf{q}_0=x\mathbf{i}+y\mathbf{j}+z\mathbf{k}\in\widehat{G}$ be such a quaternion. Define $\mathbf{q}(t)=\cos \frac{\pi}{2} t + \sin \frac{\pi}{2} t\cdot \mathbf{q}_0$. Now for any $\mathbf{p}\in S^3$, let $r_\triangleleft(t)=\mathbf{p}\mathbf{q}(t)$ and $r_\triangleright(t)=\mathbf{p}\mathbf{q}_0\mathbf{q}(t)$, then $r_\triangleleft(t)$ is a path from $\mathbf{p}$ to $\mathbf{p}\mathbf{q}_0$, $r_\triangleright(t)$ is a path from $\mathbf{p}\mathbf{q}_0$ to $\mathbf{p}\mathbf{q}_0^2=-\mathbf{p}$ and $r_\triangleright(t)=r_\triangleleft(t)\mathbf{q}_0$

Note that $p_G(\RQ(\mathbf{p}))=p_G(\RQ(\mathbf{p}\mathbf{q}))$ if $\mathbf{q}\in\widehat{G}$. So, $p_G\circ \RQ\circ r_\triangleleft$ and $p_G\circ \RQ\circ r_\triangleright$ are the same path in $\so(3)/G$ and they are a loop. Then, $v=f\circ p_G\circ \RQ\circ r_\triangleleft=f\circ p_G\circ \RQ\circ r_\triangleright$ is a loop in $\so(3)$. Consider the lift of $v$ in $S^3$. Let $\mathbf{u}=\widehat{f}(\mathbf{p})$, then the lift of $v$ in $S^3$ must start and end at either $\mathbf{u}$ or $-\mathbf{u}$. If the lifted path starting from $\mathbf{u}$ also ends at $\mathbf{u}$, then the lifted path starting from $-\mathbf{u}$ must end at $-\mathbf{u}$, for otherwise the reverse of $v$ lifts to two paths path from $\mathbf{u}$ to two different destinations $\mathbf{u}$ and $-\mathbf{u}$ which is impossible. Likewise, if the lifted path starting from $\mathbf{u}$ ends at $-\mathbf{u}$, then the lifted path starting from $-\mathbf{u}$ must end at $\mathbf{u}$.

Now join $r_\triangleleft$ and $r_\triangleright$ as a long path $r$: $r(t)=\mathbf{q}(2t)$. $r$ is a path from $\mathbf{p}$ to $\mathbf{-p}$. The first half of $r$ coincides with $r_\triangleleft$ and the second half of $r$ coincides with $r_\triangleright$. Consider the lift of $f\circ p_G\circ \RQ\circ r$. It can be obtained by joining two lifts of $v$, lets call $\widehat{v}_\triangleleft$ and $\widehat{v}_\triangleright$. If $\widehat{v}_\triangleleft$ goes from $\mathbf{u}$ to $-\mathbf{u}$, then $\widehat{v}_\triangleright$ goes from $-\mathbf{u}$ to $\mathbf{u}$. If $\widehat{v}_\triangleleft$ loops from $\mathbf{u}$ back to $\mathbf{u}$, then $\widehat{v}_\triangleleft$ does the same. In any case, the whole lifted path is a loop from $\mathbf{u}$ back to $\mathbf{u}$.

The start of this path is $\mathbf{u}=\widehat{f}(\mathbf{p})$. By the construction of $\widehat{f}$, the end point equals $\widehat{f}(-\mathbf{p})$. So for any $\mathbf{p}\in S^3$, $\widehat{f}(\mathbf{p})=\widehat{f}(\mathbf{-p})$.

Recall the definition of distance from a rotation $S$ to an element $\mathcal{R}$ of $\so(3)/G$. We have
\begin{equation}
d_G(f(p_G(\RQ(\mathbf{p}))),p_G(\RQ(\mathbf{p})))=d_G(\mathbf{p},\widehat{f}(\mathbf{p}))=\min_{q\in\widehat{G}}2d_Q(\mathbf{p},\widehat{f}(\mathbf{p})\mathbf{q})
\end{equation}

Let us now select 3 quaternions $\mathbf{q}_1$, $\mathbf{q}_2$ and $\mathbf{q}_3$ from $\widehat{G}$. The method of selection will be considered later. Let $l_i(\mathbf{p})=\mathbf{p}\cdot\widehat{f}(\mathbf{p})-\mathbf{p}\cdot(\widehat{f}(\mathbf{p})\mathbf{q}_i)$, then
\begin{align}
    l_i(\mathbf{-p})&=(\mathbf{-p})\cdot\widehat{f}(\mathbf{-p})-(\mathbf{-p})(\cdot\widehat{f}(\mathbf{-p})\mathbf{q}_i) \nonumber \\
    &=-\mathbf{p}\cdot\widehat{f}(\mathbf{p})+\mathbf{p}\cdot(\widehat{f}(\mathbf{p})\mathbf{q}_i) \nonumber \\
    &=-l_i(\mathbf{p})
\end{align}

Let $L(\mathbf{p})=(l_1(\mathbf{p}),l_2(\mathbf{p}),l_3(\mathbf{p}))$. Similar to theorem \ref{thm:3funcs}, by the Borsuk–Ulam theorem, there exists $\mathbf{p}_0$ such that $L(\mathbf{p}_0)=\mathbf{0}$. Then we have
\begin{equation}
\label{eqn:dist_eqn}
    \mathbf{p}_0\cdot\widehat{f}(\mathbf{p}_0)-\mathbf{p}_0\cdot(\widehat{f}(\mathbf{p}_0)\mathbf{q}_1) = \mathbf{p}_0\cdot\widehat{f}(\mathbf{p}_0)-\mathbf{p}_0\cdot(\widehat{f}(\mathbf{p}_0)\mathbf{q}_2) = \mathbf{p}_0\cdot\widehat{f}(\mathbf{p}_0)-\mathbf{p}_0\cdot(\widehat{f}(\mathbf{p}_0)\mathbf{q}_3) = 0
\end{equation}
$\mathbf{a}\cdot(\mathbf{b}\mathbf{c})=\mathrm{Re}(\overline{\mathbf{a}}(\mathbf{b}\mathbf{c}))=\mathrm{Re}((\overline{\mathbf{a}}\mathbf{b})\mathbf{c})=(\overline{\mathbf{b}}\mathbf{a})\cdot\mathbf{c}$ for any quaternions $\mathbf{a}$, $\mathbf{b}$ and $\mathbf{c}$, so equivalently
\begin{equation}
    (\overline{\widehat{f}(\mathbf{p}_0)}\mathbf{p}_0)\cdot 1
    =(\overline{\widehat{f}(\mathbf{p}_0)}\mathbf{p}_0)\cdot \mathbf{q}_1
    =(\overline{\widehat{f}(\mathbf{p}_0)}\mathbf{p}_0)\cdot \mathbf{q}_2
    =(\overline{\widehat{f}(\mathbf{p}_0)}\mathbf{p}_0)\cdot \mathbf{q}_3
\end{equation}
In non-degenerate cases, this will give us three independent linear equations in the four components of $\overline{\widehat{f}(\mathbf{p}_0)}\mathbf{p}_0$. Together with $||\overline{\widehat{f}(\mathbf{p}_0)}\mathbf{p}_0||=1$, $\overline{\widehat{f}(\mathbf{p}_0)}\mathbf{p}_0$ can be uniquely determined up to negation. From this, we can compute
\begin{align}
d_G(\mathbf{p}_0,\widehat{f}(\mathbf{p}_0))&=\min_{q\in\widehat{G}}2d_Q(\mathbf{p}_0,\widehat{f}(\mathbf{p}_0)\mathbf{q})\nonumber\\
&=\min_{q\in\widehat{G}}2\cos^{-1}(\mathbf{p}_0\cdot(\widehat{f}(\mathbf{p}_0)\mathbf{q}))\nonumber\\
&=\min_{q\in\widehat{G}}2\cos^{-1}((\overline{\widehat{f}(\mathbf{p}_0)}\mathbf{p}_0)\cdot\mathbf{q})
\end{align}
which gives us a lower bound for $\max_{\mathbf{p}}d_G(\mathbf{p},\widehat{f}(\mathbf{p}))$, i.e. the lower bound for the maximum error that must occur. It turns out that the highest lower bound so obtained is exactly the largest possible value of $d_G(\mathbf{p},\mathbf{q})$ and so must be the best possible lower bound. The choice of $\mathbf{q}_i$'s that achieves this is such that $\overline{\widehat{f}(\mathbf{p}_0)}\mathbf{p}_0$ as solved from equation \ref{eqn:dist_eqn} achieves the largest value. 

We list all choices of $G$, along with the elements of $\widehat{G}$, one of many best choices of $\mathbf{q}_i$'s and the corresponding $\overline{\widehat{f}(\mathbf{p}_0)}\mathbf{p}_0$ and $d_G(\mathbf{p}_0,\widehat{f}(\mathbf{p}_0))$ in table \ref{tab:sym-bound}. In the table, $\mathrm{Perm}()$ means all permutations of the number sequence, excluding duplicate sequences, $\mathrm{EvenPerm}()$ is the same but with only even permutations. $C_n$ is a degenerate case where one of the $\mathbf{q}_i$ is redundant and the solution is not unique, but they all give the same bound. Our derivation does not work for $C_n$ when $n$ is odd, but we believe that the conclusion is the same as for $C_n$ where $n$ is even, namely, that the lower bound of highest error is $\pi$.

\renewcommand{\arraystretch}{1.5}
\begin{table}
\begin{floatrow}
\ttabbox{
    \small
    \centering
    \begin{tabular}{|c|c|c|c|c|}\hline
        \multirow{2}{*}{$G$} & \multirow{2}{*}{$|G|$} & \multirow{2}{*}{Elements of $\widehat{G}$} & \multirow{2}{*}{Choice of $\mathbf{q}_i$'s} & \rule{0pt}{12pt}$\overline{\widehat{f}(\mathbf{p}_0)}\mathbf{p}_0$ \\\cline{5-5} & & & & $d_G(\mathbf{p}_0,\widehat{f}(\mathbf{p}_0))$ \\\hline
        
        \multirow{2}{*}{$C_n$} & \multirow{2}{*}{$n$} &
        \multirow{2}{145pt}{\centering\vspace{2pt}
            $(\cos\frac{k\pi}{n},0,0,\sin\frac{k\pi}{n}), 0\le k < 2n$
        } & 
        \multirow{2}{80pt}{\centering\vspace{2pt}
            \hspace{-3pt}$(-1, 0, 0, 0)$\\\vspace{2pt}
            \hspace{3pt}$(\cos\frac{\pi}{n},0,0,\sin\frac{\pi}{n})$
        } &
        $(0, \cos\theta, \sin\theta, 0), 0\hspace{-2pt}\le\hspace{-2pt}\theta\hspace{-2pt}<\hspace{-2pt}2\pi$ \\\cline{5-5} & & & & $\pi$ \\\hline
        
        \multirow{4}{*}{$D_n$} & \multirow{4}{*}{$2n$} &
        \multirow{4}{145pt}{\centering\vspace{2pt}
            \hspace{-2.5pt}$(\cos\frac{k\pi}{n},0,0,\sin\frac{k\pi}{n}), 0\le k < 2n$\\\vspace{2pt}
            \hspace{2.5pt}$(0,\cos\frac{k\pi}{n},\sin\frac{k\pi}{n},0), 0\le k < 2n$
        } & 
        \multirow{4}{80pt}{\centering\vspace{2pt}
            \hspace{-3pt}$(\cos\frac{\pi}{n},0,0,\sin\frac{\pi}{n})$\\\vspace{2pt}
            \hspace{0pt}$(0,1,0,0)$\\\vspace{2pt}
            \hspace{3pt}$(0,\cos\frac{\pi}{n},\sin\frac{\pi}{n},0)$
        } &
        \multirow{2}{*}{{\tiny $\frac{\sqrt{2}}{2}(\cos\hspace{-2pt}\frac{\pi}{2n}\hspace{-1pt},\cos\hspace{-2pt}\frac{\pi}{2n}\hspace{-1pt},\sin\hspace{-2pt}\frac{\pi}{2n}\hspace{-1pt},\sin\hspace{-2pt}\frac{\pi}{2n}\hspace{-1pt})$}} \\ & & & & \\\cline{5-5} & & & & \multirow{2}{*}{$\cos^{-1}(-\sin^2\frac{\pi}{2n})$} \\ & & & & \\\hline
        
        \multirow{4}{*}{$T$} & \multirow{4}{*}{$12$} &
        \multirow{4}{145pt}{\centering\vspace{2pt}
            \hspace{-3pt}$\mathrm{Perm}(\pm1,0,0,0)$\\\vspace{2pt}
            \hspace{3pt}$(\pm\frac{1}{2},\pm\frac{1}{2},\pm\frac{1}{2},\pm\frac{1}{2})$
        } & 
        \multirow{4}{80pt}{\centering\vspace{2pt}
            \hspace{-3pt}$(\frac{1}{2},\frac{1}{2},\frac{1}{2},\frac{1}{2})$\\\vspace{2pt}
            \hspace{0pt}$(\frac{1}{2},\frac{1}{2},\frac{1}{2},-\frac{1}{2})$\\\vspace{2pt}
            \hspace{3pt}$(\frac{1}{2},\frac{1}{2},-\frac{1}{2},\frac{1}{2})$
        } &
        \multirow{2}{*}{$(\frac{\sqrt{2}}{2},\frac{\sqrt{2}}{2},0,0)$} \\ & & & & \\\cline{5-5} & & & & \multirow{2}{*}{$\frac{\pi}{2}$} \\ & & & & \\\hline
        
        \multirow{4}{*}{$O$} & \multirow{4}{*}{$24$} &
        \multirow{4}{145pt}{\centering\vspace{2pt}
            \hspace{-3pt}$\mathrm{Perm}(\pm1,0,0,0)$\\\vspace{2pt}
            \hspace{0pt}$(\pm\frac{1}{2},\pm\frac{1}{2},\pm\frac{1}{2},\pm\frac{1}{2})$\\\vspace{2pt}
            \hspace{3pt}$\mathrm{Perm}(\pm\frac{\sqrt{2}}{2},\pm\frac{\sqrt{2}}{2},0,0)$
        } & 
        \multirow{4}{80pt}{\centering\vspace{2pt}
            \hspace{-3pt}$(\frac{\sqrt{2}}{2},\frac{\sqrt{2}}{2},0,0)$\\\vspace{2pt}
            \hspace{0pt}$(\frac{\sqrt{2}}{2},0,\frac{\sqrt{2}}{2},0)$\\\vspace{2pt}
            \hspace{3pt}$(\frac{1}{2},\frac{1}{2},\frac{1}{2},\frac{1}{2})$
        } &
        \multirow{2}{*}{$(\frac{2+\sqrt{2}}{4},\frac{\sqrt{2}}{4},\frac{\sqrt{2}}{4},\frac{2-\sqrt{2}}{4})$} \\ & & & & \\\cline{5-5} & & & & \multirow{2}{*}{$\cos^{-1}\frac{2\sqrt{2}-1}{4}$} \\ & & & & \\\hline
        
        \multirow{4}{*}{$I$} & \multirow{4}{*}{$60$} &
        \multirow{4}{145pt}{\centering\vspace{2pt}
            \hspace{-3pt}$\mathrm{Perm}(\pm1,0,0,0)$\\\vspace{2pt}
            \hspace{0pt}$(\pm\frac{1}{2},\pm\frac{1}{2},\pm\frac{1}{2},\pm\frac{1}{2})$\\\vspace{2pt}
            \hspace{3pt}$\mathrm{EvenPerm}(0,\pm\frac{\sqrt{5}+1}{4},\pm\frac{1}{2},\pm\frac{\sqrt{5}-1}{4})$
        } & 
        \multirow{4}{80pt}{\centering\vspace{2pt}
            \hspace{-3pt}$(\frac{\sqrt{5}+1}{4},\frac{1}{2},0,\frac{\sqrt{5}-1}{4})$\\\vspace{2pt}
            \hspace{0pt}$(\frac{\sqrt{5}+1}{4},\frac{\sqrt{5}-1}{4},\frac{1}{2},0)$\\\vspace{2pt}
            \hspace{3pt}$(\frac{\sqrt{5}+1}{4},0,\frac{\sqrt{5}-1}{4},\frac{1}{2})$
        } &
        \multirow{2}{*}{ $\frac{\sqrt{10}-\sqrt{2}}{8}(\sqrt{5}+2,1,1,1)$} \\ & & & & \\\cline{5-5} & & & & \multirow{2}{*}{$\cos^{-1}\frac{3\sqrt{5}-1}{8}$} \\ & & & & \\\hline
    \end{tabular}
}
{
\caption{Table for computing error bounds of $f:\so(3)/G\to\so(3)$}
\label{tab:sym-bound}
}
\end{floatrow}
\end{table}
\renewcommand{\arraystretch}{1.0}

As a sanity check, now take the formula for $D_n$ and plug in $n=2$, we get
\begin{equation}
    \min_{f}\max_{\mathbf{p}}d_{D_2}(\mathbf{p},\widehat{f}(\mathbf{p}))=\cos^{-1}(-\sin^2\frac{\pi}{4})=\cos^{-1}(-\frac{1}{2})=\frac{2\pi}{3}
\end{equation}

which is indeed what the result in the second part of section \ref{sec:exp-2} suggested.

\begin{customprop}{\ref{thm:sym-four}}
\PropSymFourText
\end{customprop}
\textit{Remark.} We have seen in the proof of both theorem \ref{thm:1func} and theorem \ref{thm:sym} that the key technique is to find a loop in the base space that lifts to a non-loop in the covering space. The existence of such a loop is because the base space is not simply connected. If we instead only require $f:\so(3)/G\to\so(3)$ to satisfy $p_G(f(\mathcal{R}))=\mathcal{R}$ on a contractible 
open subset $U$ of $\so(3)/G$ then such $f$ could exist: a contractible open subset is always evenly covered, which means the preimage of $U$ under $p_G$ is the disjoint union of open subsets of $\so(3)$ each of which is homeomorphic to $U$ under $p_G$. We can select any one of these subsets $V$ and define $f$ to be the inverse of $p_G:V\to U$ on $U$, and assign values of $f$ on $\so(3)/G\setminus U$ so that $f$ is continuous on $\so(3)/G$.

The way to achieve this is exemplified in the proof of theorem \ref{thm:4funcs-e}: expand $U$ a bit to leave some margin. Beyond the margin, let $f$ be a constant function so that global structures of $\so(3)/G$ will not affect the continuity of $f$ outside this local patch. In theorem \ref{thm:4funcs-e}, this is done by setting $f$ to $(0,0,0)$ near gimbal locked positions and discontinuities of $\atan$. In the margin, let $f$ continuously change from being correct to being constant. In theorem \ref{thm:4funcs-e} this is done by blending $(\alpha_i(M),\beta_i(M),\gamma_i(M))$ and $(0,0,0)$ using $t_i(M)$.

Now if we can cover $\so(3)/G$ with finitely many such open contractible subsets, then we can construct an $f$ for each of them and these $f$'s will make a successful ensemble. For a topological space $X$, the smallest number $k$ such that there exists an open cover $\{U_i|1\le i \le k\}$ of $X$ with each $U_i$ contractible is called the Lusternik–Schnirelmann category of $X$, denoted $\mathrm{cat}(X)$. It is proved in \cite{takens1968minimal} that for a smooth compact manifold $X$, $\mathrm{cat}(X)\le\mathrm{dim}(X)+1$. $\so(3)/G$ is a 3-dimensional smooth compact manifold, so it can be covered by four contractible open subsets.

\section{Details of Experiments}
\label{sec:exp-more}

\subsection{Initialization of Last Layer}
In all networks, in both experiments and including both the conversion functions and the classifiers, the weight of the last layer is initialized to zero. For the classifiers, the bias of the last layer is initialized to $\frac{1}{n}$ for all output neurons where $n$ is the size of the ensemble. For the conversion functions, the bias of the last layer is initialized to the representation of a randomly sampled rotation. For the quaternion, 5D and 6D representations, the rotation is uniformly sampled from $\so(3)$. For the Euler angle, we want to avoid gimbal lock in the initial bias, so instead of uniform sampling from $\so(3)$, we uniformly sample $\alpha$ and $\gamma$ from $[-\pi,\pi]$ and $\beta$ from $[-\frac{\pi}{4},\frac{\pi}{4}]$.

\subsection{Penalizing Bad Representations}
We introduce a penalty that helps prevent bad representations. Although our theory considers output spaces of various forms, in practice the output of the neural network is in a Euclidean space and has to be normalized in some way to produce a valid rotation representation, for example, for quaternion the output must be normalized to have length $1$. This can be problematic if the unnormalized output is close to zero, since zero is a singularity and a small change in the unnormalized output near zero can result in a big change in the normalized output. So to improve the stability, we bound the output of the network away from singularities by adding a penalty on representations too far away from a valid rotation representation.

For quaternions, the penalty is $\mathcal{L}_{\text{p}}(\mathbf{p})=(\ln ||\mathbf{p}||)^2$. For a 6D representation $\mathbf{x}=(x_1, x_2, \ldots, x_6)$, let
\begin{equation}
    \mathbf{u}=(x_1,x_2,x_3)\quad\mathbf{v}=(x_4,x_5,x_6)\quad\mathcal{L}_{\text{p}}(\mathbf{x})=(\ln ||\mathbf{u}||)^2+(\ln ||\mathbf{v}||)^2+(\mathbf{u}\cdot\mathbf{v})^2
\end{equation}
For a 5D representation, first convert it to 6D by stereographic projection then apply the penalty for a 6D representation. The conversion from Euler angles to rotations is smooth, so no penalty is applied.

The 3D matrix conversion problem is simple enough that this penalty is found to be unnecessary. For the point cloud rotation estimation and 4D rotation matrix conversion that we will introduce in section \ref{sec:4d} the penalty is added to the loss function with an appropriate weight.

\subsection{Additional Results of Rotation Matrix Conversion}
Here we compare quaternion and Euler angle ensembles of different sizes to show that an ensemble of four is necessary. Figure \ref{fig:t1-quat} and table \ref{tab:t1-quat} compares the conversion error with a single quaternion and quaternion ensembles of size 2, 3 and 4. We can see that by introducing a second and a third network into the ensemble the conversion is reduced overall, but this does not prevent the occurrence of error close to $180^\circ$. In contrast, introducing a fourth network causes qualitative changes in that the maximum error is lowered drastically to only about $0.1^\circ$.

Figure \ref{fig:t1-euler} and table \ref{tab:t1-euler} compares the conversion error with a single set of Euler angles, uniform $x$-$y$-$z$ Euler angle ensembles of size 2, 3 and 4, and the mixed Euler angle ensemble with two set each of $x$-$y$-$z$ and $x$-$z$-$y$ Euler angles. Qualitatively we can observe the same behavior in that ensembles of size up to three give a maximum error of $180^\circ$ while the maximum error for an ensemble of four is much lower. The mixed ensemble further improves upon the uniform ensemble of four but this improvement is only quantitative.

Although qualitatively similar, overall Euler angles compare poorly to quaternions. We think that this is due to $\RQ$ being a local isometry. In a sense, the same amount of change anywhere in the input space will also cause the same amount of change in the output space, which is a favorable condition for the network to fit such a function. The same is true for the 6D embedding. Indeed, as seen in figure \ref{fig:t1-all}, the error curves of the 6D embedding and the ensemble of four quaternions almost match perfectly. In contrast, the Euler angle does not have this property. Near the gimbal locked positions, small changes in the input can cause huge changes in the output. The 5D embedding does not have this property either, as the stereographic projection does not preserve distance.

\begin{figure}[t]\CenterFloatBoxes
\begin{floatrow}
\ffigbox[1.1\linewidth]{
  \includegraphics[width=\linewidth]{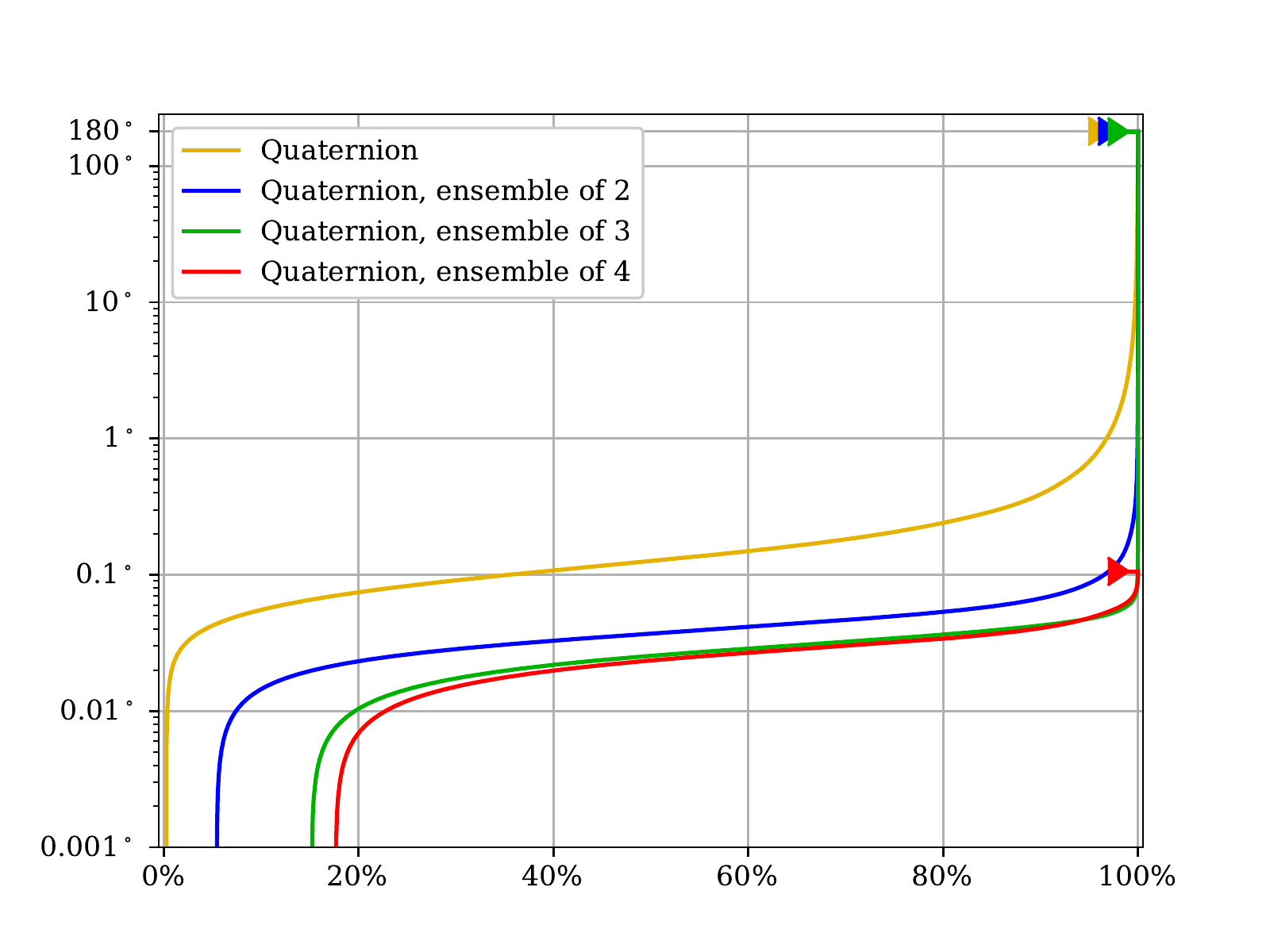}
}
{
\caption{Error of rotation matrix conversion by percentile, using different quaternion ensembles}
\label{fig:t1-quat}
}
\killfloatstyle\ttabbox{
{\footnotesize
\begin{tabular}{lrr}
  \toprule
  \cb{white} Type            & Mean($^\circ$) & Max($^\circ$)  \\
  \midrule
  \cb{clr0} Quat.            & $0.3323$       & $179.9995$ \\
  \cb{clr1} Quat. $\times$2  & $0.0443$       & $179.7149$ \\
  \cb{clr2} Quat. $\times$3  & $0.0243$       & $178.6680$ \\
  \cb{clr3} Quat. $\times$4  & $0.0226$       &   $0.1059$ \\
  \bottomrule
\end{tabular}
}
}
{
\caption{Error statistics}
\label{tab:t1-quat}
}
\end{floatrow}
\end{figure}
\vspace{15pt}

\begin{figure}[t]\CenterFloatBoxes
\begin{floatrow}
\ffigbox[1.1\linewidth]{
  \includegraphics[width=\linewidth]{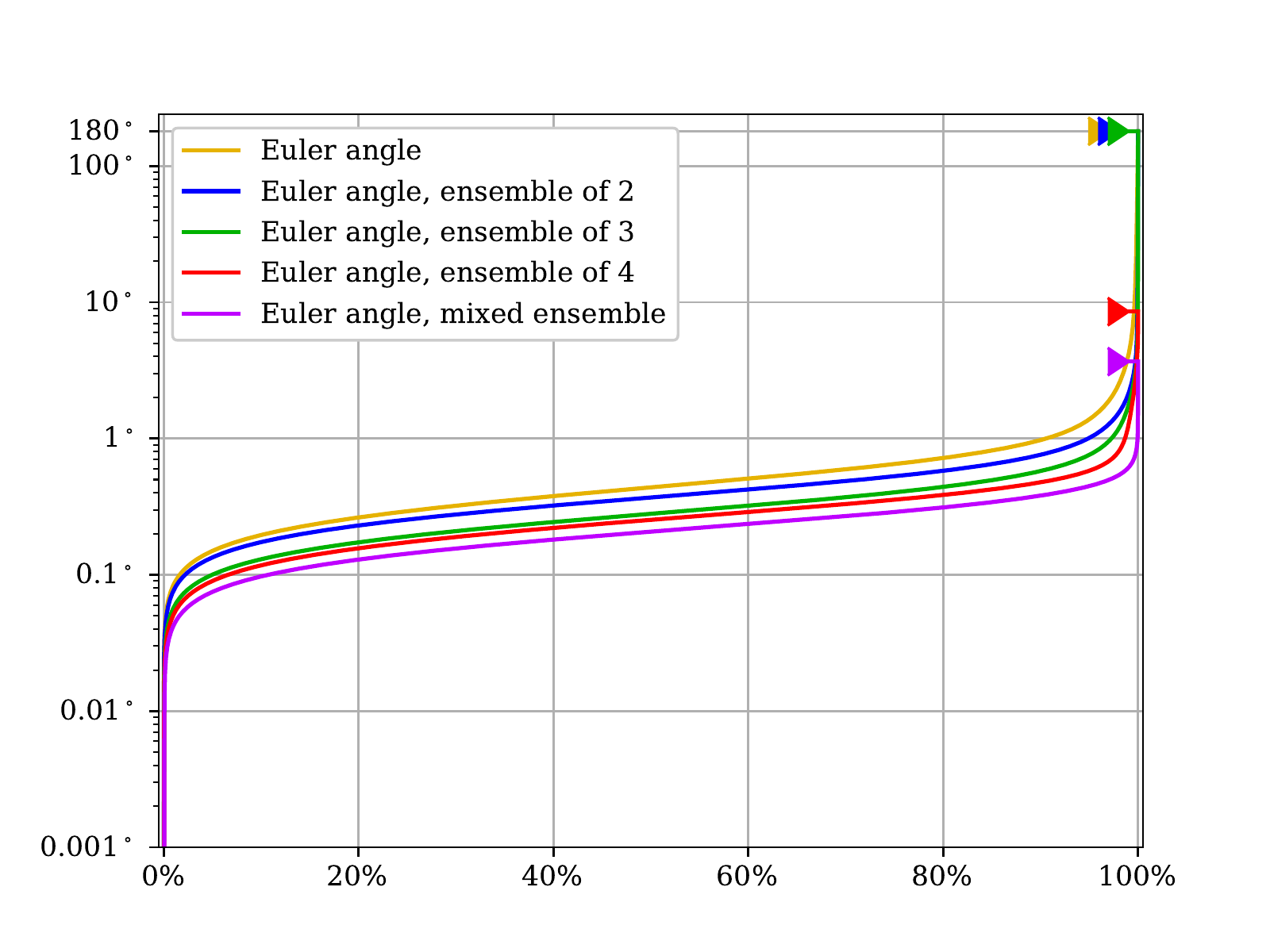}
}
{
\caption{Error of rotation matrix conversion by percentile, using different Euler angle ensembles}
\label{fig:t1-euler}
}
\killfloatstyle\ttabbox{
{\footnotesize
\begin{tabular}{lrr}
  \toprule
  \cb{white} Type            & Mean($^\circ$) & Max($^\circ$)  \\
  \midrule
  \cb{clr0} Euler            & $0.7368$       & $179.9991$ \\
  \cb{clr1} Euler $\times$2  & $0.4898$       & $179.9981$ \\
  \cb{clr2} Euler $\times$3  & $0.3578$       & $179.9979$ \\
  \cb{clr3} Euler $\times$4  & $0.2990$       &   $8.5356$ \\
  \cb{clr4} Euler mix        & $0.2278$       &   $3.6693$ \\
  \bottomrule
\end{tabular}
}
}
{
\caption{Error statistics}
\label{tab:t1-euler}
}
\end{floatrow}
\end{figure}

\subsection{Setup of Point Cloud Rotation Estimation}

Our network is a modified PointNet \cite{qi2017pointnet}. In the full PointNet there are two feature transforms on local features before max-pooling is applied to obtain global features. The transforms takes the form of a matrix multiplication, and the matrices are computed from a T-Net, which is a simplified PointNet, thus resulting in a nested structure. We consider this to be too complex for our simple problem, but we also think that this causes some global information to be combined into local features before the max pooling, which might be important. So instead of a complete removal of these nested networks, we replace it with a simpler way for incorporating global information: We perform max pooling, multiply the pooled feature with a learned matrix and apply ReLU, then concatenate the resulting feature vector to the local feature of each point. This doubles the number of features at each point, so the size of the input layer of the subsequent MLP in PointNet is modified accordingly.

We stress however that our discussion is focused on the topological relationship between the input space and the output space, rather than concrete aspects of implementation, including the choice of network architecture. As long as the network is invariant under input point permutation and has appropriate capacity for our example problem, the same qualitative result should be expected.

As mentioned in section \ref{sec:exp-2}, in each part we only use a single point cloud, for both training and testing. Normally to demonstrate the effectiveness of a machine learning algorithm, the model is trained on a training dataset of suitable size and tested on a separate test dataset, and our setup seems rather unconventional. But note that our purpose is not to demonstrate the effectiveness of a machine learning algorithm. Rather, we aim to examine a property of a machine learning problem itself. We want to show that the unavoidable large error is a result of the topological structure of the problem.

If the conventional setting is used and a large maximum arises in the test, the source of error can be hard to explain. It might be because the network failed to generalize. It might be because the network does not have enough capacity to give low error on every sample from the dataset. It might be because the topological property of the rotation representation guarantees that large errors must occur. The last one is what we want to show. The best way to prove that a large error is indeed an inherent property of the representation is to avoid introducing any possible error due to the first two reasons at all. In fact, for a moderately difficult problem, it is almost never possible to train a neural network that generalizes perfectly. Since the existence or nonexistence of large error is the main differentiator, and since even a single instance of generalization failure can result in networks using different representations giving the same largest error, it is almost ensured that we will not be able to draw any useful conclusions if a conventional training/test split is used.

Consider a negative example. In the same point cloud rotation estimation experiment in \cite{zhou2019continuity}, a large dataset of thousands of point clouds was used, with a training/test split. Theoretically, since their 5D and 6D representations are continuous and their dataset of point clouds of aeroplanes can be safely assumed to possess no rotational symmetry, a sufficiently good network should always give small error, while for the quaternion and Euler angle representation large errors are guaranteed to occur. But as shown in figure 5f in \cite{zhou2019continuity}, their experiment failed to 
reflect this important qualitative difference between continuous and non-continuous representations since every representation gave a largest error close to $180^\circ$, which presumably was due to generalization error. We need to avoid this. Thus, we adopted the setting that only the error on the training set should be considered, and the training set should comprise only one point cloud, with its rotation being the only variable quantity.

In short, certain rotation representations are bad precisely because they are guaranteed to produce a large error, even on a training dataset with only one sample.

We then introduce how we constructed our point cloud data with the desired symmetry. The basis of our point cloud is a down-sampled version of the Stanford bunny. We first normalize the model by scaling and translation so that the bounding sphere of its axis-aligned bounding box is the unit sphere. Then a 3D model consisting of four Stanford bunnies arranged to have $D_2$ symmetry is constructed by taking four copies of the base model, transformed by the following four matrices respectively:
\begin{equation}
    \begin{bmatrix}1&0&0&0.5\\0&1&0&0.5\\0&0&1&0.5\end{bmatrix}
    \begin{bmatrix}1&0&0&0.5\\0&-1&0&-0.5\\0&0&-1&-0.5\end{bmatrix}
    \begin{bmatrix}-1&0&0&-0.5\\0&1&0&0.5\\0&0&-1&-0.5\end{bmatrix}
    \begin{bmatrix}-1&0&0&-0.5\\0&-1&0&-0.5\\0&0&1&0.5\end{bmatrix}
\end{equation}
Our point cloud with $D_2$ symmetry is the set of vertices of the resulting 3D model.

The bunny itself has no rotational symmetry. But we desired a point cloud with no symmetry that is otherwise similar to the point cloud with $D_2$ symmetry, so we took four copies the base model and translated them by $(0.5, 0.5, 0.5)$, $(0.5, -0.5, -0.5)$, $(-0.5, 0.5, -0.5)$ and $(-0.5, -0.5, 0.5)$ and took the vertices to form our point cloud with no symmetry.

\begin{figure}
\centering
\begin{subfigure}{0.4\textwidth}
  \includegraphics[width=\textwidth]{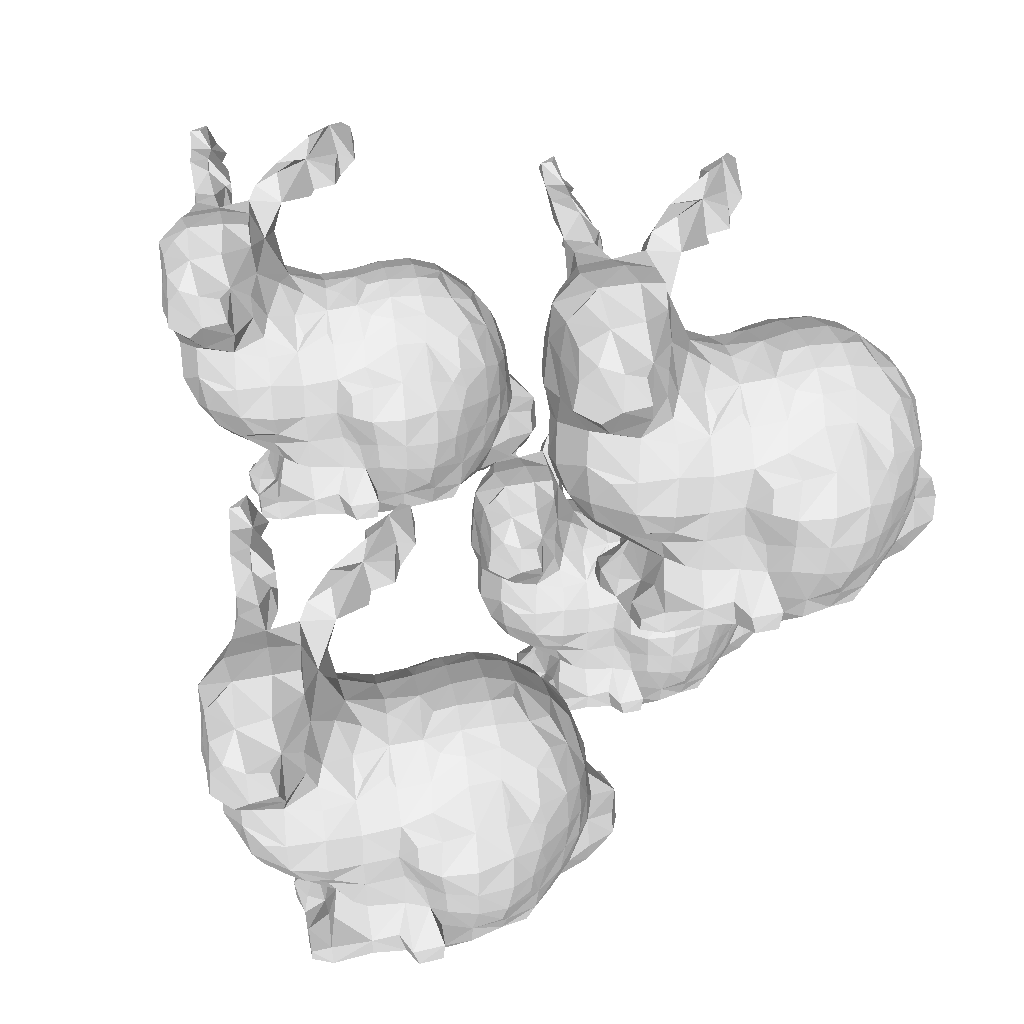}
  \caption{With no symmetry}
\end{subfigure}
\begin{subfigure}{0.4\textwidth}
  \includegraphics[width=\textwidth]{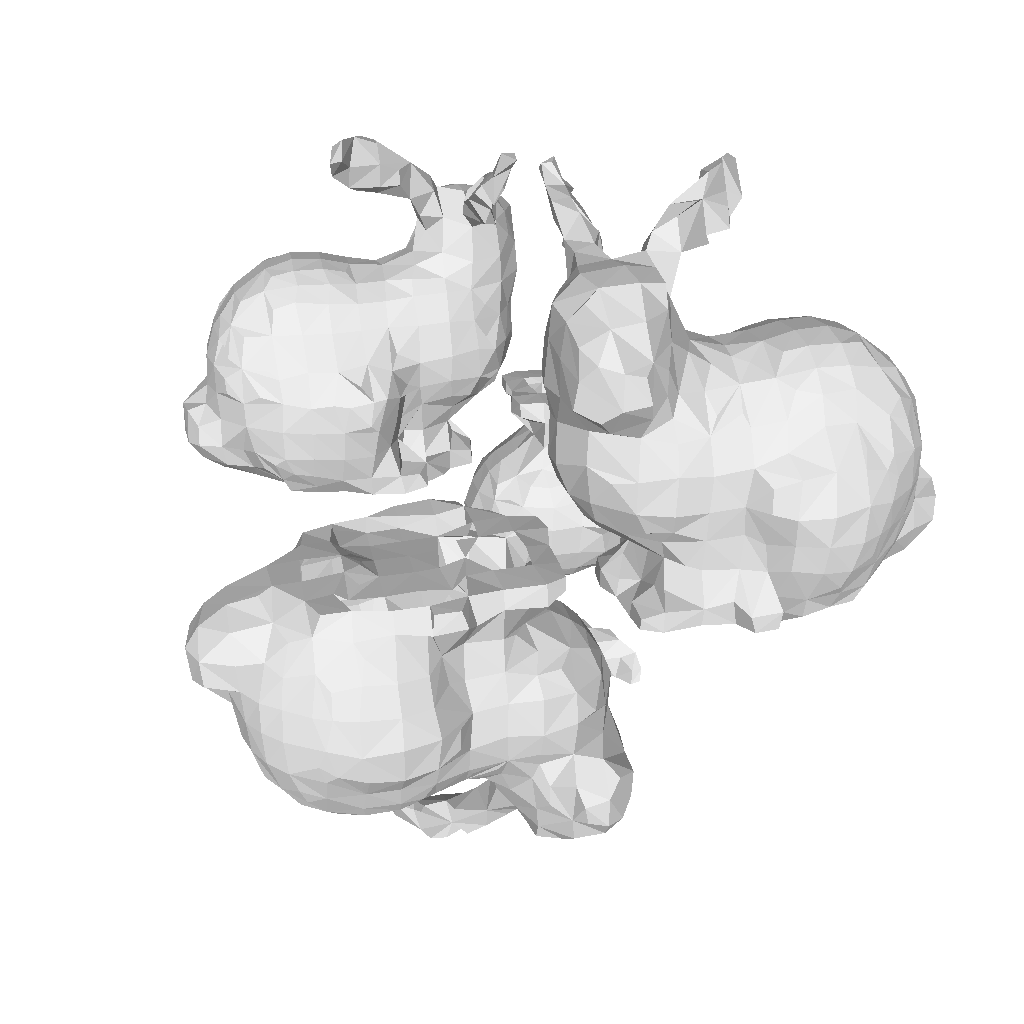}
  \caption{With $D_2$ symmetry}
\end{subfigure}
\caption{3D models of quadruple Stanford bunnies with different symmetries}
\label{fig:pointcloud}
\end{figure}

The resulting 3D models are shown in figure \ref{fig:pointcloud}. Since point clouds of scanned symmetric real-world objects typically do not have perfect symmetry, we simulate this effect by adding Gaussian noise to point coordinates. The displacement of each point is independent, and generated independently on the fly for each training sample.

We train each network using Adam with learning rate $10^{-4}$ and batch size $8$ for $500,000$ iterations. For training and testing, we sample uniformly from $\so(3)$. We sample $3$ million random rotations for testing. To improve stability, the Gaussian noise is introduced gradually. In the first $100$k iterations, We do not add any noise. Then for the next $100$k iterations, the standard deviation of the Gaussian noise increases linearly from $0$ to $0.01$. Then for the rest of the training it stays at $0.01$. The representation penalty is imposed with a weight of $1$ for the first $300$k iterations. Then for the next $100$k iterations the weight linearly decreases to $0$. For the rest of the training the penalty is removed.

The loss function is the same as equation \ref{eqn:loss} but with $d()$ replaced with $d_G()$. $\mathcal{L}_{\text{max}}$ may also be altered to take the maximum value of $d_G()$ but the difference is minor as long as it is no smaller than the actual maximum possible error so for simplicity we keep it at $\pi$.

\section{Extension to 4D Rotations}
\label{sec:4d}
In this section we extend some of our results to the quaternion pair representation of 4D rotations.

\subsection{Theoretical Results}
We first prove the error bound of conversion from 4D rotation matrices to quaternion pairs. We jump straight to an ensemble of three functions, since it implies the same result for a single function.
\begin{thm}
For any three continuous functions $f_1, f_2, f_3: \so(4) \to S^3 \times S^3$, there exists a rotation $R\in \so(4)$ such that $d_4(R,f_i(R))\ge\pi$ for all $i\in\{1, 2, 3\}$.
\end{thm}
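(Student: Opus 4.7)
The plan is to adapt the Borsuk--Ulam argument of Theorem~\ref{thm:3funcs} using two ingredients: a reduction of the 4D distance inequality $d_4\ge \pi$ to a sign condition on quaternion inner products, and a restriction to the diagonal $S^3 \hookrightarrow S^3 \times S^3$ so that the classical Borsuk--Ulam theorem on $S^3$ applies directly.

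First I would rewrite the condition $d_4((\mathbf{p}_L,\mathbf{p}_R),(\mathbf{q}_L,\mathbf{q}_R))\ge \pi$ in a usable form. Let $a = \cos^{-1}(\mathbf{p}_L\cdot \mathbf{q}_L)$ and $b = \cos^{-1}(\mathbf{p}_R\cdot \mathbf{q}_R)$, so $a,b\in[0,\pi]$. The formula from Appendix~\ref{sec:math} gives $d_4 = \min\{a+b,\,2\pi-a-b\} + |a-b|$. A short case split shows
\begin{equation*}
d_4 \;=\; \begin{cases} 2\max(a,b), & a+b\le\pi,\\ 2\pi - 2\min(a,b), & a+b> \pi,\end{cases}
\end{equation*}
so in either case $d_4\ge \pi$ is equivalent to $\cos a$ and $\cos b$ not being strictly of the same sign; concisely,
\begin{equation*}
d_4\bigl((\mathbf{p}_L,\mathbf{p}_R),(\mathbf{q}_L,\mathbf{q}_R)\bigr)\ge\pi \iff (\mathbf{p}_L\cdot \mathbf{q}_L)\,(\mathbf{p}_R\cdot \mathbf{q}_R)\le 0.
\end{equation*}

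Next I would exploit the covering $\RQQ$ only along the diagonal: for $\mathbf{p}\in S^3$ set $R(\mathbf{p}) = \RQQ(\mathbf{p},\mathbf{p})$, write $(\mathbf{u}_{i,L}(\mathbf{p}),\mathbf{u}_{i,R}(\mathbf{p})) = f_i(R(\mathbf{p}))$, and define
\begin{equation*}
g_i(\mathbf{p}) \;=\; \mathbf{p}\cdot \mathbf{u}_{i,L}(\mathbf{p}) \,+\, \mathbf{p}\cdot \mathbf{u}_{i,R}(\mathbf{p})\qquad(i=1,2,3).
\end{equation*}
Because $\RQQ(-\mathbf{p},-\mathbf{p}) = \RQQ(\mathbf{p},\mathbf{p})$, the quantities $\mathbf{u}_{i,L}(\mathbf{p})$ and $\mathbf{u}_{i,R}(\mathbf{p})$ are unchanged under $\mathbf{p}\mapsto -\mathbf{p}$, so $g_i(-\mathbf{p}) = -g_i(\mathbf{p})$. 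The map $G = (g_1,g_2,g_3)\colon S^3\to \mathbb{R}^3$ is therefore continuous and odd, and the classical Borsuk--Ulam theorem produces a point $\mathbf{p}_0\in S^3$ with $G(\mathbf{p}_0) = \mathbf{0}$.

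At this $\mathbf{p}_0$, each equation $g_i(\mathbf{p}_0) = 0$ gives $\mathbf{p}_0\cdot \mathbf{u}_{i,L}(\mathbf{p}_0) = -\mathbf{p}_0\cdot \mathbf{u}_{i,R}(\mathbf{p}_0)$, whence
\begin{equation*}
\bigl(\mathbf{p}_0\cdot \mathbf{u}_{i,L}(\mathbf{p}_0)\bigr)\bigl(\mathbf{p}_0\cdot \mathbf{u}_{i,R}(\mathbf{p}_0)\bigr) \;=\; -\bigl(\mathbf{p}_0\cdot \mathbf{u}_{i,L}(\mathbf{p}_0)\bigr)^2 \;\le\; 0
\end{equation*}
for every $i\in\{1,2,3\}$; by the sign criterion of step one this reads $d_4(R_0,f_i(R_0))\ge \pi$ for $R_0 = R(\mathbf{p}_0)$. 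The main obstacle is really the first step: it is not obvious a priori that the awkward inequality $\min\{a+b,2\pi-a-b\}+|a-b|\ge \pi$ collapses to the clean symmetric product condition, but once this reduction is in hand the rest parallels the proof of Theorem~\ref{thm:3funcs} almost verbatim, with the diagonal embedding $\mathbf{p}\mapsto(\mathbf{p},\mathbf{p})$ doing the work of converting the antipodal symmetry of $\RQQ$ into an odd map on a single sphere.
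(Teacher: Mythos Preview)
Your proof is correct and follows essentially the same approach as the paper: you restrict to the diagonal $\mathbf{p}\mapsto(\mathbf{p},\mathbf{p})$, form the odd map $g_i(\mathbf{p})=\mathbf{p}\cdot\mathbf{u}_{i,L}+\mathbf{p}\cdot\mathbf{u}_{i,R}$ (this is exactly the paper's $v_i$), and invoke Borsuk--Ulam on $S^3$. The only cosmetic difference is that you first establish the general equivalence $d_4\ge\pi\iff(\mathbf{p}_L\cdot\mathbf{q}_L)(\mathbf{p}_R\cdot\mathbf{q}_R)\le 0$ and then apply it, whereas the paper observes directly that $g_i(\mathbf{p}_0)=0$ forces $d_Q(\mathbf{p}_0,\mathbf{u}_{i,L})+d_Q(\mathbf{p}_0,\mathbf{u}_{i,R})=\pi$ and plugs this into the $d_4$ formula; both routes amount to the same computation.
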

\begin{proof}
Define the dot product between pairs of quaternions as $(\mathbf{p},\mathbf{q})\cdot(\mathbf{r},\mathbf{s})=\mathbf{p}\cdot\mathbf{q}+\mathbf{r}\cdot\mathbf{s}$. Consider functions $v_i: S^3 \to \mathbb{R}$ defined by $v_i(\mathbf{q})=(\mathbf{q},\mathbf{q})\cdot f_i(\RQQ(\mathbf{q},\mathbf{q}))$, for $i=1, 2, 3$. For any
$\mathbf{q} \in S^3$, Since
$\RQQ(-\mathbf{q},-\mathbf{q})=\RQQ(\mathbf{q},\mathbf{q})$, 
\begin{equation}
    v_i(-\mathbf{q})=(-\mathbf{q},-\mathbf{q})\cdot f_i(\RQQ(-\mathbf{q},-\mathbf{q}))=-(\mathbf{q},\mathbf{q})\cdot f_i(\RQQ(\mathbf{q},\mathbf{q}))=-v_i(\mathbf{q})
\end{equation}

Let $V:S^3 \to \mathbb{R}^3$ defined by $V(\mathbf{q})=(v_1(\mathbf{q}), v_2(\mathbf{q}), v_3(\mathbf{q}))$, then $V(-\mathbf{q})=-V(\mathbf{q})$
for any $\mathbf{q} \in S^3$.
By the Borsuk–Ulam theorem, there exists $\mathbf{q}_0 \in S^3$ such that $V(-\mathbf{q}_0)=V(\mathbf{q}_0)$, then $-V(\mathbf{q}_0)=V(-\mathbf{q}_0)=V(\mathbf{q}_0)$,
so $V(\mathbf{q}_0)=\mathbf{0}$, which means $v_1(\mathbf{q}_0)=v_2(\mathbf{q}_0)=v_3(\mathbf{q}_0)=0$.

Assume that $f_i(\RQQ(\mathbf{q}_0,\mathbf{q}_0))=(\mathbf{r}_i,\mathbf{s}_i)$. Then $(\mathbf{q}_0,\mathbf{q}_0)\cdot(\mathbf{r}_i,\mathbf{s}_i)=0$ so $\mathbf{q}_0\cdot\mathbf{r}_i=-\mathbf{q}_0\cdot\mathbf{s}_i$. So,
\begin{align}
    d_Q(\mathbf{q}_0,\mathbf{r}_i)&=\cos^{-1}(\mathbf{q}_0\cdot\mathbf{r}_i)\nonumber\\
    &=\cos^{-1}(-\mathbf{q}_0\cdot\mathbf{s}_i)\nonumber\\
    &=\pi-\cos^{-1}(\mathbf{q}_0\cdot\mathbf{s}_i)\nonumber\\
    &=\pi-d_Q(\mathbf{q}_0,\mathbf{s}_i)
\end{align}
So,
\begin{align}
    &d_4((\mathbf{q}_0,\mathbf{q}_0),(\mathbf{r}_i,\mathbf{s}_i))\nonumber\\
    =&\min\{d_Q(\mathbf{q}_0,\mathbf{r}_i)+d_Q(\mathbf{q}_0,\mathbf{s}_i),2\pi-d_Q(\mathbf{q}_0,\mathbf{r}_i)-d_Q(\mathbf{q}_0,\mathbf{s}_i)\}\nonumber\\
    &+|d_Q(\mathbf{q}_0,\mathbf{r}_i)-d_Q(\mathbf{q}_0,\mathbf{s}_i)|\nonumber\\
    \ge&\min\{\pi,2\pi-\pi\}\nonumber\\
    =&\pi
\end{align}
So, $\RQQ(\mathbf{q}_0,\mathbf{q}_0)$ is a rotation such that $d_4(\RQQ(\mathbf{q}_0,\mathbf{q}_0),f_i(\RQQ(\mathbf{q}_0,\mathbf{q}_0)))\ge\pi$ for all $i\in\{1, 2, 3\}$.
\end{proof}

In 3D, our lower bound of maximum error, $\pi$, is also the maximum value of $d(\mathbf{p},\mathbf{q})$, so it must be optimal. Here the maximum value of $d_4((\mathbf{p},\mathbf{q}),(\mathbf{r},\mathbf{s}))$ is $2\pi$ but we only proved a lower bound of $\pi$. We show however that in this case the lower bound is also optimal.

\begin{thm}
\label{thm:4d-tight-bound}
There exists a continuous function $f:\so(4)\to S^3\times S^3$ such that for any rotation $R\in\so(4)$, $d_4(R,f(R))\le\pi$.
\end{thm}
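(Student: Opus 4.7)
The plan is to exhibit $f$ explicitly rather than argue for its existence abstractly. Write each $R\in\so(4)$ as the image under $\RQQ$ of a lift $(\mathbf{p}_L,\mathbf{p}_R)\in S^3\times S^3$ that is unique up to simultaneous sign flip, and define
\[
m(R)=\mathbf{p}_L\mathbf{p}_R,\qquad f(R)=(\mathbf{1},\,m(R)).
\]
Because $(-\mathbf{p}_L)(-\mathbf{p}_R)=\mathbf{p}_L\mathbf{p}_R$, the quaternion $m(R)$ does not depend on the choice of lift, so $m$ is a well-defined map $\so(4)\to S^3$. Local continuity of $\RQQ^{-1}$ together with continuity of quaternion multiplication make $m$, hence $f$, continuous on all of $\so(4)$.

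Next I would verify the error bound using the quaternion-distance formula derived earlier in the paper. Pick any lift $(\mathbf{p}_L,\mathbf{p}_R)$ of $R$ and set $A=d_Q(\mathbf{p}_L,\mathbf{1})$ and $B=d_Q(\mathbf{p}_R,\mathbf{p}_L\mathbf{p}_R)$, so that
\[
d_4(R,f(R))=\min\{A+B,\,2\pi-A-B\}+|A-B|.
\]
The key identity is $A=B$: using the cyclic invariance $\mathrm{Re}(uv)=\mathrm{Re}(vu)$ for quaternions together with $\mathbf{p}_R\overline{\mathbf{p}_R}=1$,
\[
\mathbf{p}_R\cdot(\mathbf{p}_L\mathbf{p}_R)=\mathrm{Re}(\overline{\mathbf{p}_R}\mathbf{p}_L\mathbf{p}_R)=\mathrm{Re}(\mathbf{p}_L\mathbf{p}_R\overline{\mathbf{p}_R})=\mathrm{Re}(\mathbf{p}_L)=\mathbf{p}_L\cdot\mathbf{1},
\]
so $\cos A=\cos B$ and therefore $A=B\in[0,\pi]$. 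With $A=B$, the distance formula collapses to $d_4(R,f(R))=\min\{2A,2\pi-2A\}\le\pi$, which is the required bound.

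The argument is essentially a one-line calculation once the construction $(\mathbf{1},\mathbf{p}_L\mathbf{p}_R)$ is written down, so the sole non-routine step is discovering that construction. The motivation is that we need a lift-invariant function whose two quaternion components have the correlation property $(\mathbf{p}_L\cdot f_L)(\mathbf{p}_R\cdot f_R)\ge 0$ identified (implicitly) by the case analysis of the $d_4$ formula; multiplying the two quaternions together produces a component whose dot product with $\mathbf{p}_R$ is forced by conjugation invariance to equal $\mathrm{Re}(\mathbf{p}_L)$, matching the dot product of $\mathbf{1}$ with $\mathbf{p}_L$ exactly. That exact matching ($A=B$) is what gives the clean bound of $\pi$ and makes any obstruction-theoretic machinery unnecessary.
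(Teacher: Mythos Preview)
Your proof is correct and is essentially identical to the paper's own argument: the paper also sets $f(\mathbf{p},\mathbf{q})=(\mathbf{1},\mathbf{p}\mathbf{q})$, checks sign-invariance, and uses the same real-part/conjugation identity to get $d_Q(\mathbf{p},\mathbf{1})=d_Q(\mathbf{q},\mathbf{p}\mathbf{q})$, from which $d_4\le\pi$ follows immediately.
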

\begin{proof}
We give an example of such a function. For clarity we define $f$ on the quaternion representation. If we ensure that $f(\mathbf{p},\mathbf{q})=f(-\mathbf{p},-\mathbf{q})$ for all $\mathbf{p},\mathbf{q}\in S^3$, $f$ will be a well defined function of rotations.

Let $f(\mathbf{p},\mathbf{q})=(\mathbf{1},\mathbf{p}\mathbf{q})$, then for any $\mathbf{p},\mathbf{q}\in S^3$, $f(-\mathbf{p},-\mathbf{q})=(\mathbf{1},(-\mathbf{p})(-\mathbf{q}))=(\mathbf{1},\mathbf{p}\mathbf{q})=f(\mathbf{p},\mathbf{q})$, and
\begin{align}
    &d_4((\mathbf{p},\mathbf{q}),f(\mathbf{p},\mathbf{q}))\nonumber\\
    =&d_4((\mathbf{p},\mathbf{q}),(\mathbf{1},\mathbf{p}\mathbf{q}))\nonumber\\
    =&\min\{d_Q(\mathbf{p},\mathbf{1})+d_Q(\mathbf{q},\mathbf{p}\mathbf{q}),2\pi-d_Q(\mathbf{p},\mathbf{1})-d_Q(\mathbf{q},\mathbf{p}\mathbf{q}))\}+|d_Q(\mathbf{p},\mathbf{1})-d_Q(\mathbf{q},\mathbf{p}\mathbf{q}))|\nonumber\\
    \le&\pi+|\cos^{-1}(\mathrm{Re}(\mathbf{p}))-\cos^{-1}(\mathrm{Re}(\mathbf{p}))|\nonumber\\
    =&\pi
\end{align}
\end{proof}

Similar to the 3D case, we can construct a successful ensemble of four functions. In fact, the construction for 4D directly uses the construction for 3D.

\begin{thm}
\label{thm:4d-4funcs}
There exists continuous functions $g_1, g_2, g_3, g_4: \so(4) \to S^3 \times S^3$ such that for any rotation $R\in \so(4)$, $\RQQ(g_i(R))=R$ for some $i\in\{1, 2, 3, 4\}$.
\end{thm}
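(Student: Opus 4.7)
The plan is to reduce the 4D construction directly to the 3D ensemble from Theorem~\ref{thm:4funcs}, exploiting the fact that the quaternion pair representation of $\so(4)$ has only the joint redundancy $(\mathbf{p},\mathbf{q})\sim(-\mathbf{p},-\mathbf{q})$. Using this pair representation, I would define
\[
    g_i(\mathbf{p},\mathbf{q}) = \bigl(\,f_i(\mathbf{p}),\ f_i(\mathbf{p})\,\overline{\mathbf{p}}\,\mathbf{q}\,\bigr)
\]
for $i\in\{1,2,3,4\}$, where $f_1,\ldots,f_4:S^3\to S^3$ are the four functions constructed in the proof of Theorem~\ref{thm:4funcs}. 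The first component simply reuses the 3D ensemble; the second component is $\mathbf{q}$ twisted by $f_i(\mathbf{p})\overline{\mathbf{p}}$, a continuous unit quaternion that will act as a sign tracker coupling the two components.

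The first step is to verify that each $g_i$ descends to a well-defined continuous function on $\so(4)$. Since $f_i(-\mathbf{p})=f_i(\mathbf{p})$ and $\overline{-\mathbf{p}}\cdot(-\mathbf{q})=\overline{\mathbf{p}}\,\mathbf{q}$, both components of $g_i(\mathbf{p},\mathbf{q})$ are invariant under the joint antipodal map, so $g_i$ factors through $\RQQ$. Continuity is immediate from the continuity of $f_i$, quaternion conjugation, and quaternion multiplication.

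The second step is to establish correctness on the "safe" region of each $f_i$. The construction in Theorem~\ref{thm:4funcs} arranges that $f_i(\mathbf{p})=\mathbf{p}$ when the $i$-th coordinate of $\mathbf{p}$ is at least $\tfrac{1}{2}$ and $f_i(\mathbf{p})=-\mathbf{p}$ when it is at most $-\tfrac{1}{2}$. In either subcase, $f_i(\mathbf{p})\,\overline{\mathbf{p}}=\pm\mathbf{1}$, giving $g_i(\mathbf{p},\mathbf{q})=(\pm\mathbf{p},\pm\mathbf{q})$ with matching signs, and hence $\RQQ(g_i(\mathbf{p},\mathbf{q}))=\RQQ(\mathbf{p},\mathbf{q})=R$. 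Since for any unit quaternion $(a,b,c,d)$ at least one of $a^2,b^2,c^2,d^2$ is at least $\tfrac{1}{4}$, the four safe regions collectively cover $S^3$, so for every $R\in\so(4)$ at least one $g_i$ produces a valid quaternion pair for $R$.

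The main subtlety, and the reason a naive choice like $g_i(\mathbf{p},\mathbf{q})=(f_i(\mathbf{p}),\mathbf{q})$ fails, is that the sign ambiguity in the pair representation is joint rather than independent: flipping the sign on the left component must be matched by flipping the sign on the right. The factor $f_i(\mathbf{p})\,\overline{\mathbf{p}}$ is the correct continuous surrogate for this sign, reducing to $\pm\mathbf{1}$ precisely on the safe region where $f_i$'s sign choice is meaningful while remaining a continuous unit quaternion on the transition region where $f_i$ smoothly interpolates. Once this coupling is set up, the proof reduces entirely to properties of the 3D ensemble already established.
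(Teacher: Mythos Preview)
Your proposal is correct and follows essentially the same approach as the paper: reduce to the 3D ensemble $f_1,\dots,f_4$ from Theorem~\ref{thm:4funcs} and couple the second component with a continuous unit-quaternion factor that collapses to $\pm\mathbf{1}$ on the safe region. The only cosmetic difference is that the paper uses $g_i(\mathbf{p},\mathbf{q})=(f_i(\mathbf{p}),\overline{f_i(\mathbf{p})}\,\mathbf{p}\,\mathbf{q})$ whereas you use $g_i(\mathbf{p},\mathbf{q})=(f_i(\mathbf{p}),f_i(\mathbf{p})\,\overline{\mathbf{p}}\,\mathbf{q})$; both couplers are antipodal-invariant, continuous, and reduce to $\pm\mathbf{q}$ with the matching sign when $f_i(\mathbf{p})=\pm\mathbf{p}$, so either choice works.
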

\begin{proof}
We give an example of such a set of functions. For clarity, we define them as functions of quaternions. If we ensure that $g_i(\mathbf{p},\mathbf{q})=g_i(-\mathbf{p},-\mathbf{q})$ for all $\mathbf{p},\mathbf{q}\in S^3$ and $i\in\{1, 2, 3, 4\}$, they will be well-defined functions of rotations. Use the definition of $f_i$ from the proof of theorem \ref{thm:4funcs}. Let
\begin{equation}
   g_i(\mathbf{p},\mathbf{q})=(f_i(\mathbf{p}),\overline{f_i(\mathbf{p})}\mathbf{p}\mathbf{q})
\end{equation}
then
\begin{equation}
    g_i(-\mathbf{p},-\mathbf{q})=(f_i(-\mathbf{p}),\overline{f_i(-\mathbf{p})}(-\mathbf{p})(-\mathbf{q}))
    =(f_i(\mathbf{p}),\overline{f_i(\mathbf{p})}\mathbf{p}\mathbf{q}))=g_i(\mathbf{p},\mathbf{q})
\end{equation}
By theorem \ref{thm:4funcs}, for all $\mathbf{p}\in S^3$ there exists $k\in\{1,2,3,4\}$ such that $\RQ(\mathbf{p})=\RQ(f_k(\mathbf{p}))$, which means $f_k(\mathbf{p})=\mathbf{p}$ or $\mathbf{p}=-\mathbf{p}$. Now for any $\mathbf{p},\mathbf{q}\in S^3$ find the $k$ such that $\RQ(\mathbf{p})=\RQ(f_k(\mathbf{p}))$. If $f_k(\mathbf{p})=\mathbf{p}$, then $g_k(\mathbf{p},\mathbf{q})=(\mathbf{p},\overline{\mathbf{p}}\mathbf{p}\mathbf{q})=(\mathbf{p},\mathbf{q})$. If $f_k(\mathbf{p})=-\mathbf{p}$, then $g_k(\mathbf{p},\mathbf{q})=(-\mathbf{p},-\overline{\mathbf{p}}\mathbf{p}\mathbf{q})=(-\mathbf{p},-\mathbf{q})$. In either case, $\RQQ(\mathbf{p},\mathbf{q})=\RQQ(g_k(\mathbf{p},\mathbf{q}))$.
\end{proof}

\subsection{Experiments}
We test the accuracy of converting 4D rotation matrices to pairs of quaternions using ensembles of different sizes. The size of the MLP used for this experiment is increased to $8$ hidden layers of size $256$ each. training time, batch size and learning rate remain he same as section \ref{sec:exp-1}. The result is shown in figure \ref{fig:t1-4d} and table \ref{fig:t1-4d}.

We found that, in terms of eliminating large errors, the conversion of 4D rotation matrices is considerably harder than 3D rotation matrices. In particular, the existence or nonexistence of a large maximum error is the main differentiator between ensembles of three or fewer networks and ensembles with four or more networks and we aim to show this difference, but stochastic gradient descent aims at reducing average loss with does not necessarily result in a low maximum error. Thus we modified the training procedure to put more emphasis on inputs with large error: in each iteration, all input samples in the training batch were sorted by their error. Half of the current batch with the largest errors were retained in the training batch of the next iteration, while the other half of the next training batch was randomly sampled.

Even with this modification, we were not able to reliably train an ensemble of four networks that result in a low maximum error. So we additionally tested with an ensemble of five networks, which when combined with this training strategy proved much easier to train than an ensemble of four. We think that this is likely due to the ``correct region'' of some or all networks in the ensemble of four being noncontractible. Note that there is a difference between the construction in the proof of theorem \ref{thm:4d-4funcs} compared to those in theorems \ref{thm:2d-2funcs}, \ref{thm:4funcs} and \ref{thm:4funcs-e}: in the earlier theorems, the correct region of each given function is contractible, while in theorem \ref{thm:4d-4funcs}, the correct region of each function is homeomorphic to $D^3\times S^3$ ($D^n$ is the closed disk) which is homotopic to $S^3$ and thus noncontractible. The classifier may have difficulty dividing $\so(4)$ into noncontractible regions without strong supervision.

Indeed, it is not possible to cover $\so(4)$ with $4$ contractible open subsets, as it is known that $\mathrm{cat}(\so(4))=5$. Possibly as a result of this, we can see that we succeeded with an ensemble of five.

Nevertheless, we show that a successful ensemble of four networks do exist by forcing it to learn the functions we constructed in theorem \ref{thm:4d-4funcs}. We modify the loss function as follows: let $\mathcal{L}_i(R)$ be the loss of network $i$ on input matrix $R$, define
\begin{align}
    \mathcal{L}_1(\RQQ(\mathbf{p},\mathbf{q}))=&\begin{cases}
        d_4((\mathbf{p},\mathbf{q}), f_1(\RQQ(\mathbf{p},\mathbf{q}))) & (|\mathrm{Re}(\mathbf{p})|\ge\frac{1}{2}) \\
        2\pi & (|\mathrm{Re}(\mathbf{p})|<\frac{1}{2})
    \end{cases}\\
    \mathcal{L}_2(\RQQ(\mathbf{p},\mathbf{q}))=&\begin{cases}
        d_4((\mathbf{p},\mathbf{q}), f_2(\RQQ(\mathbf{p},\mathbf{q}))) & (|\mathrm{Re}(\mathbf{pi})|\ge\frac{1}{2}) \\
        2\pi & (|\mathrm{Re}(\mathbf{pi})|<\frac{1}{2})
    \end{cases}\nonumber\\
    \mathcal{L}_3(\RQQ(\mathbf{p},\mathbf{q}))=&\begin{cases}
        d_4((\mathbf{p},\mathbf{q}), f_3(\RQQ(\mathbf{p},\mathbf{q}))) & (|\mathrm{Re}(\mathbf{pj})|\ge\frac{1}{2}) \\
        2\pi & (|\mathrm{Re}(\mathbf{pj})|<\frac{1}{2})
    \end{cases}\nonumber\\
    \mathcal{L}_4(\RQQ(\mathbf{p},\mathbf{q}))=&\begin{cases}
        d_4((\mathbf{p},\mathbf{q}), f_4(\RQQ(\mathbf{p},\mathbf{q}))) & (|\mathrm{Re}(\mathbf{pk})|\ge\frac{1}{2}) \\
        2\pi & (|\mathrm{Re}(\mathbf{pk})|<\frac{1}{2})
    \end{cases}\nonumber
\end{align}
That is, the $\mathcal{L}_i$ takes the maximum value $2\pi$ regardless of the output if the input does not lie in the correct region of $f_i$ as defined in the proof of theorem \ref{thm:4d-4funcs}. The loss function of the whole ensemble is derived from these as in equation \ref{eqn:loss}. We train the networks with this modified loss function for the first $300$k iterations, then revert to the normal loss function for another $200$k iterations. The result is shown as ``Theorem 14''. We can see that this hand-designed ensemble of four achieves lower maximum error than the ensemble of five but higher average error than the ensemble of three, which shows again that lower average error and lower maximum error might be conflicting goals and if eliminating large errors is considered important then we may need more suitable training methods.

Another point of note is that all autonomous ensembles up to size $4$ gave a maximum error close to $360^\circ$. We would like to verify theorem \ref{thm:4d-tight-bound}, and again we are faced with the problem with lowering the maximum error. This time, we deliberately let the network learn the ``wrong'' function we constructed in the proof of theorem \ref{thm:4d-tight-bound}. That is, the loss function of the network used for training is $\mathcal{L}(\RQQ(\mathbf{p},\mathbf{q}))=d_4((\mathbf{p},\mathbf{q}),(\mathbf{1},\mathbf{p}\mathbf{q}))$ while for testing the error is measured as normal. The result is shown as ``Theorem 13''. While the average error is awfully large (in theory it 
equals $\frac{\pi}{2}+\frac{2}{\pi}\approx 126.4756^\circ$, the average angle of a uniform random 3D rotation), the maximum error is close to $180^\circ$.

\begin{figure}\CenterFloatBoxes
\begin{floatrow}
\ffigbox[1.1\linewidth]{
  \includegraphics[width=\linewidth]{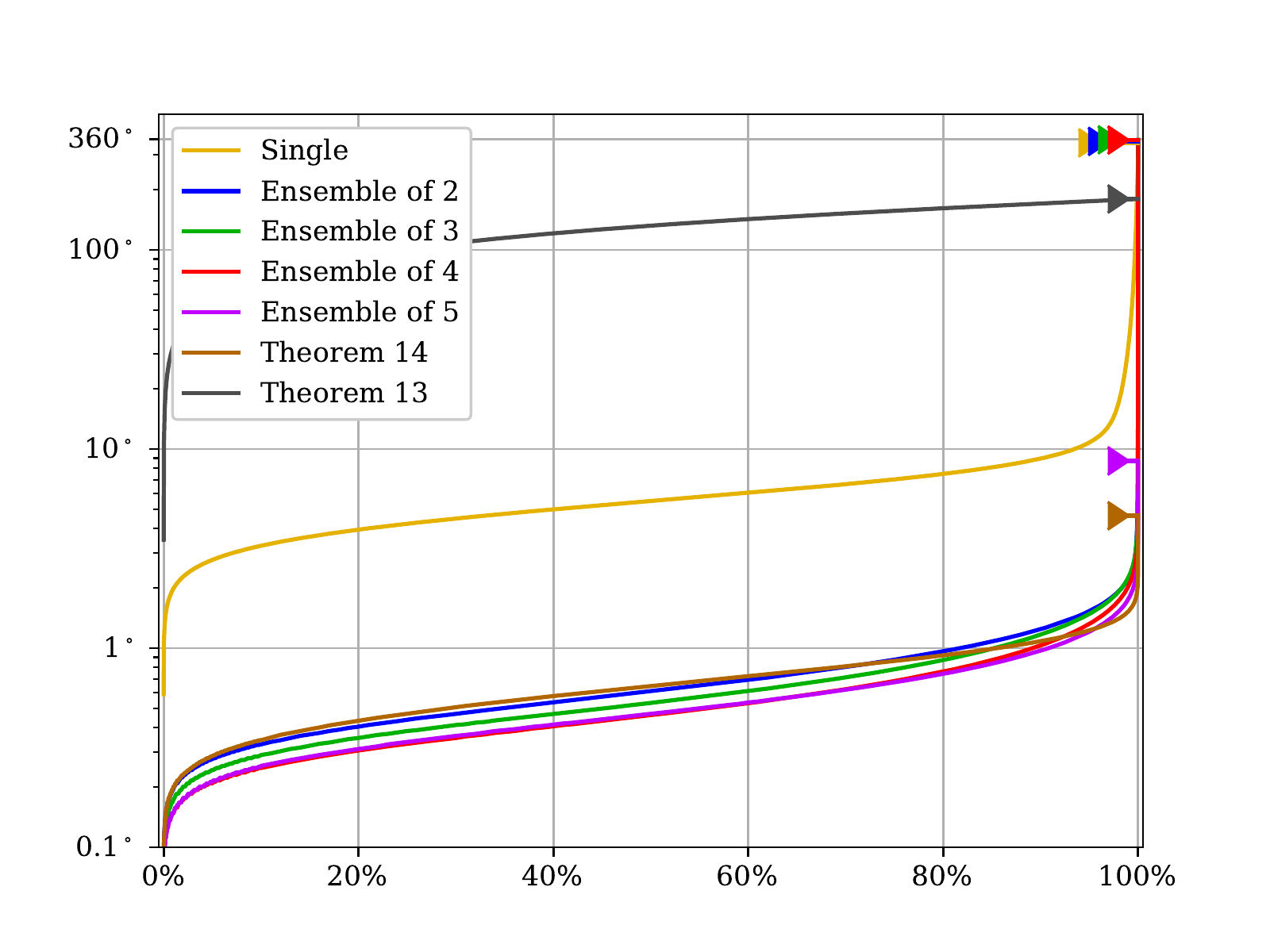}
}
{
\caption{Error of 4D rotation matrix conversion by percentile, using different quaternion pair ensembles}
\label{fig:t1-4d}
}
\killfloatstyle\ttabbox{
{\footnotesize
\begin{tabular}{lrr}
  \toprule
  \cb{white} Type     & Mean($^\circ$) & Max($^\circ$)  \\
  \midrule
  \cb{clr0} Single    &   $6.7637$     & $344.7241$ \\
  \cb{clr1} $\times$2 &   $0.7298$     & $350.5893$ \\
  \cb{clr2} $\times$3 &   $0.6702$     & $357.1087$ \\
  \cb{clr3} $\times$4 &   $0.5760$     & $356.0749$ \\
  \cb{clr4} $\times$5 &   $0.5563$     &   $8.7145$ \\
  \cb{clr5} Thm. 14   &   $0.6877$     &   $4.6233$ \\
  \cb{clr6} Thm. 13   & $126.5174$     & $180.1556$ \\
  \bottomrule
\end{tabular}
}
}
{
\caption{Error statistics}
\label{tab:t1-4d}
}
\end{floatrow}
\end{figure}

\end{document}